\let\@wraptoccontribs\wraptoccontribs\makeatother
\title[Finiteness properties of algebraic fibers of group extensions]{Finiteness properties of algebraic fibers of group extensions}
\author{Dessislava H. Kochloukova}
\address{Department of Mathematics, State University of Campinas (UNICAMP), S\~ao Paulo, Brasil}
\email{desi@unicamp.br}
\author{Stefano Vidussi}
\address{Department of Mathematics, University of California, Riverside, CA 92521, USA} \email{svidussi@ucr.edu}
\date{\today}
\theoremstyle{plain}
\newtheorem*{theorem*}{Theorem}
\newtheorem*{corollary*} {Corollary}
\newtheorem*{proposition*}{Proposition}
\newtheorem*{conjecture*}{Conjecture}
\newtheorem{theorem}{Theorem}[section]
\newtheorem{corollary}{Corollary}[section]
\newtheorem{lemma}{Lemma}[section]
\newtheorem{proposition}{Proposition}[section]
\newtheorem{conjecture}{Conjecture}[section]
\theoremstyle{remark}
\newtheorem*{remark}{Remark}
\newtheorem{definition}{Definition}[section]
\newtheorem*{ack}{Acknowledgements}
\theoremstyle{definition}
\def\op{\operatorname}
\def\G{\Gamma}
 \def\Q{\mathbb{Q}}  \def\Z{\mathbb{Z}} \def\R{\mathbb{R}}  
\def\lr{\longrightarrow}
    \def\bp{\begin{pmatrix}}
 \def\ep{\end{pmatrix}} \def\bn{\begin{enumerate}} 
   \def\en{\end{enumerate}}
\def\ba{\begin{array}} \def\ea{\end{array}}
  \def\Ker{\operatorname{Ker}}
\def\S{\Sigma}
\def\ker{\op{Ker}}\def\be{\begin{equation}} \def\ee{\end{equation}}
\def\An{\op{Aut}(F_n)} \def\A2{\op{Aut}(F_2)}  
\def\On{\op{Out}(F_n)} \def\O2{\op{Out}(F_2)}
 \def\SA2{\op{Aut^{+}}(F_2)} 
\def\IA2{\op{Inn}(F_2)} 
 \def\SO2{\op{Out^{+}}(F_2)}
 \def\Sy4{\op{S}_4}
 \def\dim{\mbox{dim\hspace*{1pt}}}
\def\op{\operatorname}
\newcommand\wh{\widehat}
\newcommand\out{\operatorname{Out}}
\newcommand\aut{\operatorname{Aut}}
\newcommand\Inn{\operatorname{Inn}}
\newcommand\ssm{\smallsetminus}
\newcommand\hP{\widehat{\Pi}}
\newcommand\hookra{\hookrightarrow}
\begin{document}

\contrib[with an appendix by]{Marco Boggi}

\maketitle

\begin{abstract} This survey describes some recent work, by the authors and others, on the existence of algebraic fibrations of group extensions, as well as the finiteness properties of their algebraic fibers, in the realm of both abstract and pro-$p$ groups. We also discuss some applications of these results to (higher) coherence.
    
\end{abstract}

\section{Introduction}

Let $G$ be a finitely generated group, and let $\phi \colon G \to \Z$ be an epimorphism. We say that $\phi$ is an {\em algebraic fibration} if $\ker{\phi} \unlhd G$ is finitely generated (f.g. henceforth), in which case we refer to $\ker{\phi}$ as an {\em algebraic fiber}. The study of algebraic fibrations of groups is a storied and, at the same time, current topic of research in group theory. It dates back, at least morally, to the study of {\em fibered knots}, classical knots whose exterior $N(K) = S^3 \setminus \nu K$ is a locally-trivial surface bundle over a circle, developed (among others) by Lee Neuwirth, Elvira Rapaport Strasser and John Stallings in the early 1960's. The surface in object is compact, with a single boundary component, hence its fundamental group is $F_{2g}$, the free group on $2g$ generators, where $g$ is the genus of the knot. (Some of the facts casually stated here are actually nontrivial.) The long exact sequence of a fibration reduces, in this case, to the fact that the abelianization map $\phi \colon \pi_1(N(K)) \to \Z$ has kernel $F_{2g}$. More generally, given a compact, irreducible (i.e. such that all $2$-spheres bound $3$--disks) oriented $3$-manifold $N$ that fibers over the circle (which entails that the boundary, if nonempty, is a disjoint union of tori), we have an epimorphism $\phi \colon \pi_1(N) \to \Z$ with kernel a surface group, i.e. an algebraic fibration. In the $3$--manifold case, a remarkable fact, due to Stallings, is that an algebraic fibration on a $3$--manifold group $\pi_1(N)$ actually ``integrates" to a locally trivial surface bundle structure on $N$, and in particular the group $\ker{\phi}$ is the fundamental group of a finite complex. This fact is simultaneously central and peculiar to the study of $3$--manifolds. 

The realm of this survey is the class of groups that can be written as extensions of f.g. groups, i.e. appear in a short exact sequence of the type
\begin{equation} \label{eq:ext} 1 \lr K \lr G \stackrel{f}{\lr} \G \lr 1 \end{equation} where $K$ and $\G$ are themselves finitely generated. With a slight abuse of notation, we will refer to groups of this type {\em tout court} as extensions. Our goal is to determine conditions under which these extensions admit a algebraic fibrations and discuss, in a sense, ``what can go wrong" (compared with the $3$--manifold case) in terms of regularity of the fiber. In particular, we will find instances of a variety of finiteness properties that the kernels of algebraic fibrations (even in geometric situations) can have. 

This survey is structured as follows. In Section \ref{sec:fin} we will define the finiteness conditions we are interested in, and their interplay with the BNSR invariants. In Section \ref{sec:ext} we will introduce the class of groups in the title, namely groups that are extensions of a f.g. group by a f.g. group. Section \ref{sec:main} contains the main results implying or obstructing finiteness of the algebraic fibers, as well as some applications. Section \ref{sec:prof} discusses pro-$p$ groups, their presentations, homology and finiteness.
Section \ref{sec:ab} relates the theories of finitely presented pro-$p$ and (abstract) metabelian groups. Section \ref{sec:extpro} discusses the pro-$p$ version of some of the main results in Section \ref{sec:main}, and Section \ref{sec:coh} is devoted to some applications of these results to coherence. Further results on coherence follow by applying the results contained in the Appendix written by Marco Boggi.
\begin{ack} It is a pleasure to thank Stefan Friedl, Rob Kropholler and Genevieve Walsh for many conversations on the topics of this survey. The second author would also like to thank Luca Di Cerbo and Laurentiu Maxim for organizing a stimulating Session on the Singer-Hopf Conjecture in Geometry and Topology at the AMS Sectional Meeting of March 2023 in Atlanta, GA, in which many of the results contained in this survey were presented. We thank the referee for the numerous suggestions. The first named author was partially supported by Bolsa de produtividade em pesquisa CNPq 305457/2021-7 and Projeto tem\'atico FAPESP 18/23690-6.
\end{ack}

\section{Finiteness properties of groups and BNSR invariants} \label{sec:fin} 
We start by introducing finiteness properties of groups, that can be thought of as strengthening of the condition of finite generation. These come in two broad flavors, that can be thought of as {\em homotopical} or {\em homological}. (We refer \cite{Br94,Br10} for a thorough discussion of these notions, and the many related facts that will be implicitly used in what follows.)

\begin{definition} \label{def:fin} Let $G$ be a group and let $n$ be a nonnegative integer.
\begin{itemize}
\item we say that $G$ {\em has type $F_n$} if it admits as classifying space a CW-complex $K(G,1)$, which we can assume without loss of generality to have a single vertex, whose $n$-skeleton is finite; furthermore we say it has type $F$ if the $K(G,1)$ is a finite complex;
\item we say $G$ {\em has type $FP_n(R)$}, where $R$ is a commutative ring with unity, if $R$, as a trivial $R[G]$-module, admits a resolution by projective $R[G]$-modules
\[ \ldots \lr P_i \lr P_{i-1} \ldots \lr P_0 \lr R \lr 0, \]
with the first $n$-terms being f.g. as $R[G]$-modules.
\end{itemize} 
\end{definition}

We usually write $FP_n$ to mean type $FP_n(\Z)$. For a group $G$, finiteness type $F_1$ (or $FP_1(R)$) is equivalent to being f.g., and finiteness type $F_2$ is equivalent to finite presentation (f.p. henceforth); thanks to work of Bestvina--Brady there exist however groups of type $FP_2$ (classically referred to as {\em almost finitely presented}), and even $FP_{\infty}$, that fail to be $F_2$, see \cite{BB97}. In more generality, finiteness type $F_n$ entails $FP_n(R)$ for all $R$, but the converse fails. In Section \ref{sec:prof} we will extend this definition to the case of pro-$p$ groups.

As outlined before, we will mostly be concerned with the finiteness properties of algebraic fibers, which are by definition kernels of an epimorphism from a group $G$ to $\Z$, hence in particular $[G,G] \unlhd \ker{\phi} \unlhd G$. Conveniently, the study of the finiteness properties of such subgroups fits in the broader context of the Bieri--Neumann--Strebel--Renz (BNSR) invariants of the ``ambient" group $G$. Much like finiteness properties, these exist in both homotopical and homological form. These were defined originally in \cite{BNS87} (for the $n=1$ case in which they are more often referred to as BNS invariants), and then extended to all $n$ in \cite{R88,BR88}.

In order to define them, consider the projectivization of the first cohomology group $H^{1}(G;\R)$, namely the {\em character sphere} \[ S(G) :=(H^{1}(G;\R) \setminus \{0\})/\R^{+}; \] elements $\chi \in S(G)$ are referred to as {\em characters}, and a character is {\em discrete} if one of its representatives in $H^{1}(G;\R)$ is contained in the lattice $H^{1}(G;\Z) \subset H^{1}(G;\R)$. Stated differently, there exists a representative of $\chi$ which is given by an epimorphism $\phi \colon G \to \Z$.

It is convenient to define the $0$-th homotopical and homological invariant $\S^{0}(G) = \S^{0}(G,R) = S(G)$. To define the $n$-th (homotopical) BNSR invariant $\S^{n}(G) \subset S(G)$, assume now that $G$ has type $F_n$, and let $\G_{n}$ be the $n$-skeleton of the universal cover of some $K(G,1)$. Choosing $K(G,1)$ to have a single vertex, $\G_0$ is in bijection with $G$ and $\G_1$ is the 
Cayley graph for some finite generating set. Given a character $\chi \in S(G)$, let $(\G_n)_{\chi}$ be the subcomplex of $\G_n$ defined by the vertices $v \in \Gamma_{0}$ that satisfy $\chi(v) \geq 0$ (this definition is independent of the choice of representative of $S(G)$ in $H^{1}(G;\R)$).  
\[ \S^{n}(G) = \{\chi \in S(G) | (\G_{n})_{\chi} \text{~ is $(n-1)$-connected for some choice of $\G_n$} \}. \]

To define the $n$-th (homological) BNSR invariant $\S^{n}(G,R) \subset S(G)$, define much as above $G_{\chi}$ to be the submonoid defined by the elements $g \in G$ such that $\chi(g) \geq 0$. We say that $R$ is of type $FP_n$ as a (trivial) $R[G_{\chi}]$ module if it admits a resolution with the first $n$-terms f.g. projective $R[G_{\chi}]$-modules. Then 
\[ \S^{n}(G,R) = \{\chi \in S(G) | R \text{~ is of type $FP_n$ as $R[G_{\chi}]$-module} \}. \] If $G$ is not of type $FP_n(R)$, we put $\S^{n}(G;R) = \emptyset$.

The first BNSR invariant $\S^{1}(G) = \S^{1}(G,R)$ is the same for the homotopical and homological variants, and irrespective of the choice of ring $R$, while the invariants may differ at higher degree. 

The connection between the finiteness properties of algebraic fibers and BNSR invariants is distilled in the following result, whose proof combines work contained in \cite{BR88,R88,R89}.

\begin{proposition} \cite{BR88,R88,R89} Let $G$ be a f.g. group, and let $\chi \in S(G)$ be a discrete character.
If $G$ has type $F_n$ (respectively type $FP_n$), then $\ker \chi \unlhd G$ has type $F_n$ (respectively type $FP_n$ if and only if $\pm \chi \in \S^{n}(G)$ (respectively $\pm \chi \in \S^{n}(G,R)$).
\end{proposition}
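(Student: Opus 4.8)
The plan is to recast both sides of the asserted equivalence as statements about the connectivity of filtrations of a single contractible $G$-complex, and then to prove that these two conditions agree. First I would fix the geometry. Since $G$ has type $F_n$ (resp. $FP_n(R)$), choose a $K(G,1)$ whose $n$-skeleton is finite and let $X$ be its universal cover, so $X$ is contractible and $G$ acts freely with $\G_n = X^{(n)}$ being $G$-cocompact. A representative $\phi\co G\to\Z$ of the discrete character $\chi$ extends to a height function $h\co X\to\R$, affine on cells, with $h(gv)=h(v)+\phi(g)$ on vertices. I would then use the standard reformulation (implicit in \cite{BR88,R88,R89}) of the definition given above: $\chi\in\S^n(G)$ (resp. $\S^n(G,R)$) holds if and only if the ascending system of super-level sets $\{h^{-1}([-t,\infty))\}_{t\ge 0}$ is essentially $(n-1)$-connected (resp. essentially $(n-1)$-acyclic over $R$), meaning that for each $s$ there is $t\ge s$ making the inclusion into the $s$-level kill $\pi_k$ (resp. $\widetilde H_k(-;R)$) for $k\le n-1$. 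Symmetrically, $-\chi$ lies in the invariant exactly when the descending system $\{h^{-1}((-\infty,t])\}_{t\ge 0}$ is essentially $(n-1)$-connected.

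Next I would bring in Brown's criterion to handle $N:=\ker\chi=\ker\phi$. The key point, and the reason discreteness is assumed, is that each ``slab'' $h^{-1}([-t,t])$ is $N$-invariant and $N$-cocompact: the only direction of $X/N$ not already compactified by passing from $G$ to $N$ is the $\Z$-direction recorded by $h$, which the slab truncates. Since $X$ is contractible, $N$ acts on it freely, and the slabs are $N$-cocompact and exhaust $X$, Brown's criterion gives that $N$ has type $F_n$ (resp. $FP_n(R)$) if and only if the two-sided filtration $\{h^{-1}([-t,t])\}_{t\ge 0}$ is essentially $(n-1)$-connected (resp. essentially $(n-1)$-acyclic). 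The proposition therefore reduces to the purely topological assertion that the slab filtration is essentially $(n-1)$-connected precisely when both the ascending and descending half-space filtrations are.

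I would prove this equivalence by a Mayer--Vietoris argument, which is cleanest in the homological case. For each $t\ge 0$ the two half-spaces cover $X$ with intersection the slab, so since $\widetilde H_*(X;R)=0$ the Mayer--Vietoris sequence collapses to
\[ \widetilde{H}_{k}\big(h^{-1}([-t,t]);R\big)\;\cong\;\widetilde{H}_{k}\big(h^{-1}([-t,\infty));R\big)\oplus\widetilde{H}_{k}\big(h^{-1}((-\infty,t]);R\big),\qquad k\le n-1. \]
By naturality of Mayer--Vietoris in the triad, deepening the filtration from $t$ to $t'$ is, under these isomorphisms, the direct sum of the two half-space inclusion maps; hence the direct system computing the slab homology splits as the direct sum of the ascending and descending systems. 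As a direct sum of maps is eventually trivial if and only if each summand is, the slab filtration is essentially $(n-1)$-acyclic if and only if both half-space filtrations are, which is exactly the desired statement for $\S^n(G,R)$.

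The main obstacle is upgrading this to the homotopical statement for $\S^n(G)$, where there is no clean direct-sum splitting. Here I would run the same covering $X=h^{-1}([-t,\infty))\cup h^{-1}((-\infty,t])$ but argue with van Kampen in degree one and a Blakers--Massey/Hurewicz bootstrap in higher degrees, using the contractibility of $X$ to convert acyclicity back into connectivity once $\pi_1$ is controlled. Two points demand genuine care: first, the compatibility (naturality) of the gluing isomorphisms as $t$ varies, needed so that the ``essential'' part of the condition is transported faithfully between the slab and half-space systems; and second, the order in which the various thresholds are chosen, since essential connectivity only guarantees that induced maps vanish after passing \emph{sufficiently} far into a filtration, so the interleaving of ascending and descending thresholds with the connecting maps must be tracked explicitly. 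Assembling the homological and homotopical arguments yields the stated biconditional in both variants.
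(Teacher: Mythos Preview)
The paper does not supply a proof of this proposition: it is quoted as a known result, attributed to \cite{BR88,R88,R89}, and the text moves immediately to its consequences. There is therefore no argument in the paper to compare yours against.

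For what it is worth, the strategy you outline is essentially the one in the cited sources. The reformulation of $\S^n$ in terms of essential $(n-1)$-connectivity of half-space filtrations, the use of Brown's filtration criterion on the $N$-cocompact slabs (which is indeed where discreteness of $\chi$ enters), and the Mayer--Vietoris splitting linking the slab system to the two half-space systems are all standard ingredients. Your homological argument is correct as written; the naturality step you invoke is exactly what makes the ``essential'' vanishing transfer between the systems. The homotopical upgrade you flag as the main obstacle is genuinely the delicate part, and the route you propose (van Kampen for $\pi_1$, then an inductive Hurewicz/excision argument once simple connectivity is in hand) is the one carried out in Renz's work. One small point worth tightening: in low degrees you should check that the half-spaces are themselves connected (or arrange this by choosing $t$ large enough relative to the diameters of cells), since Mayer--Vietoris and van Kampen both need path-connectedness of the pieces to behave as you use them.
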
 

On face value, this proposition embeds the problem of determining the finiteness properties of an algebraic fiber of $G$ into the broader problem of studying the BNSR invariants of $G$: this brings an entire apparatus to the study of the original problem, and in particular offers the advantage that the BNSR invariants of $G$ are {\em open} subsets of the character sphere (endowed with the subspace topology arising as unit sphere $S(G) \subset H^{1}(G;\R) \cong \R^{b_1(G)}$). The added flexibility of working with non-discrete characters is comparable to use cohomology with real coefficients as opposed to integer ones, and this is frequently more than analogy and becomes a guiding principle, as in the development of the Thurston norm as tool to study genus-minimizing surfaces in $3$--manifolds. 

\section{Extensions} \label{sec:ext}
The study of BNSR invariants of groups is tightly related with our ability to understand the properties of the group, and it is rarely straightforward. The scope of this survey is therefore quite narrow, and is directed towards a very specific class of groups, namely f.g. groups $G$ that arise as extensions of f.g. groups. Two classes of groups come to mind in that realm: free-by-free groups (where both $K$ and $\G$ are free groups) and fundamental groups of aspherical manifolds which admit a locally trivial fibration. The latter class notably contain surface-by-surface groups. 

The case where the extension is trivial, i.e. $G = K \times \G$, is the first example that comes to mind. In that case it is well-known that the first BNSR invariant of $G$ can be read directly from those of $K$ and $\G$: precisely we have the relation (more easily phrased in terms of complementary subsets)
\[ \S^{1}(G)^{c} = \bigcup_{0 \leq p \leq 1} \S^{p}(K)^c \ast \S^{1-p}(\G)^c \] and a similar relationship (necessarily) holds for the homological invariants. (Here, characters on $K$ and $\G$ are interpreted as characters on $G$ by pull-back.) Extending this result to $n \geq 2$ is notoriously intricate and partial results, as well as counterexamples to the most obvious generalizations (the ``product formula"), hold, see \cite{BG10}. One could hope, at a minimum, that the BNSR invariants of $G$ are ``affected" by those of $K$ and $\G$, and in fact it is not difficult to show that if a character of $\G$ has a given finiteness property, so does the corresponding pull-back character of $G$.  The situation is more complex regarding characters from $K$ because they may not extend, or they do not canonically, as characters of $G$ (there's no pull-back!). There's one easy exception (which, in a sense, is complementary to the product case), and that is the case where $b_1(G) = b_1(\G)$, so that $S(G) = S(\G)$ (properly, under identification under pull-back). In that case it is not too difficult to see that as we have assumed that $K$ is f.g. then $\S^{1}(G) = \S^{1}(\G)$. (Assuming suitable stronger assumptions on the finiteness properties of $K$, we can deduce a similar equality result for higher BNSR invariants.) 

In fact, this result generalizes to a class of groups that can be written as extensions, and that arise in Complex Geometry, namely K\"ahler groups. 

\begin{definition} A complex manifold $X$ is K\"ahler if it can be endowed with an Hermitian metric such that the associated $2$-form is closed. Fundamental groups of compact K\"ahler manifolds are referred to as {\em K\"ahler groups}.  
\end{definition}
Differently from what happens with complex manifolds (which can have any f.p. group as fundamental group) K\"ahler groups admit significant restrictions. Implicitly, one such restriction is the outcome of the next result, appearing in \cite{De10}, that constraints the form of the first BNS invariant for K\"ahler groups (obviously, with $b_1(G) > 0$). We will formulate it in a way that emphasizes the fact, {\em a priori} non-obvious, that these groups can be thought of as extensions as in Equation (\ref{eq:ext}).
\begin{theorem} \label{thm:delzant} \cite{De10} Let $G$ be a K\"ahler group. Then either 
\begin{itemize}
\item $G$ has finitely generated commutator subgroup, hence is an extension of the form
\[ 1 \lr [G,G] \lr G \lr H_1(G) \lr 1 \]
or
\item there exist a finite collection of hyperbolic orbisurface groups ${\G_i,i \in I}$ and extensions \[ 1 \lr K_i \lr G \stackrel{f_i}{\lr} \G_i \lr 1 \] such that \[ \S^{1}(G) = S(G) \setminus \coprod_{i \in I} f_{i}^{*} S(\G_i) \]
\end{itemize}
\end{theorem}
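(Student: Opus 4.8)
The plan is to realize $\S^{1}(G)^{c}$ as a finite disjoint union of subspheres pulled back from the holomorphic pencils of a compact K\"ahler manifold $X$ with $\pi_1(X)=G$ onto hyperbolic orbicurves, so that the two alternatives correspond to the cases $I=\varnothing$ and $I\neq\varnothing$. The first reduction is purely group-theoretic: by the work of Bieri--Neumann--Strebel \cite{BNS87}, for a f.g. group one has $\S^{1}(G)=S(G)$ if and only if $[G,G]$ is finitely generated. This is exactly the first bullet, and it is the case $I=\varnothing$ of the formula $\S^{1}(G)=S(G)\setminus\coprod_{i\in I}f_i^{*}S(\G_i)$. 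It therefore remains, assuming $\S^{1}(G)\neq S(G)$, to construct the orbisurface pencils and to prove the displayed equality.

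First I would establish the inclusion $\coprod_{i}f_i^{*}S(\G_i)\subseteq\S^{1}(G)^{c}$. Any holomorphic map with connected fibers $f_i\co X\to C_i$ onto a hyperbolic orbicurve induces a surjection $f_{i*}\co G\onto\G_i$ onto the orbifold fundamental group $\G_i=\pi_1^{\mathrm{orb}}(C_i)$, and its kernel $K_i$ is finitely generated, since a smooth fiber is a closed surface and the relevant homotopy sequence exhibits $K_i$ as a quotient of a surface group. A hyperbolic orbisurface group is cocompact Fuchsian or virtually free, and in either case has empty first invariant, $\S^{1}(\G_i)=\varnothing$. Restricting the extension $1\lr K_i\lr G\lr\G_i\lr 1$ to the subsphere $f_i^{*}S(\G_i)$, the finite generation of $K_i$ lets me invoke the pull-back principle discussed in Section~\ref{sec:ext} (the case where the first Betti numbers agree), giving $\S^{1}(G)\cap f_i^{*}S(\G_i)=f_i^{*}\S^{1}(\G_i)=\varnothing$. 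Finiteness of $I$ follows from the classical de Franchis--Severi bound on the number of such pencils, and a maximal-rank argument shows the subspheres $f_i^{*}S(\G_i)$ are pairwise disjoint, accounting for the $\coprod$.

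The reverse inclusion $\S^{1}(G)^{c}\subseteq\coprod_{i}f_i^{*}S(\G_i)$ is the heart of the matter, and is where the K\"ahler hypothesis enters essentially. Given $\chi\notin\S^{1}(G)$, I would first convert this failure of a finiteness property into cohomological degeneracy: through the Bieri--Renz characterization recalled in Section~\ref{sec:fin} and Novikov homology, non-membership in $\S^{1}(G)$ forces the associated Alexander--Novikov module to be non-finitely-generated, equivalently it produces a nontrivial isometric action of $G$ on an $\R$-tree determined by $\chi$. The decisive step is to upgrade this degeneracy into an actual holomorphic fibration: here I would apply the harmonic-map techniques of Gromov--Schoen, together with the orbifold form of the Castelnuovo--de Franchis theorem due to Catanese, which on a compact K\"ahler manifold turn such a tree action (resp. such an isotropic subspace of $H^{1}$) into a holomorphic pencil $f_i\co X\to C_i$ over a hyperbolic orbicurve through which $\chi$ factors, so that $\chi\in f_i^{*}S(\G_i)$.

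The main obstacle is precisely this last bridge, from the group-theoretic failure of finiteness to the existence of a holomorphic object: it has no counterpart for arbitrary finitely presented groups (for which the conclusion is false) and rests on Hodge theory and harmonic maps. Two subtleties must be handled with care: producing the correct orbifold structure on the base, so that the quotient is the orbisurface group $\G_i=\pi_1^{\mathrm{orb}}(C_i)$ and not merely a surface group; and identifying the pencil so obtained with one already on the finite list from the easy inclusion. Both are controlled by Arapura's structure theorem, which describes the first characteristic variety of $G$ as a finite union of torsion-translated subtori, each pulled back from exactly such an orbicurve fibration; matching these data against the BNS invariant closes the argument.
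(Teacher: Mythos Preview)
The paper does not supply its own proof of this theorem: it is quoted from Delzant \cite{De10} as a structural input for the survey, and only the remarks following the statement (on orbisurface groups having empty BNS invariant, and on the orthogonality of the subspaces $f_i^*H^1(\G_i;\Q)$) serve as commentary. There is therefore nothing in the paper to compare your argument against line by line.

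That said, your outline tracks the architecture of Delzant's actual proof rather closely: the dichotomy via $\S^1(G)=S(G)\Leftrightarrow [G,G]$ f.g., the easy inclusion using $\S^1(\G_i)=\varnothing$ and finite generation of $K_i$, and the hard inclusion via an action on an $\R$-tree followed by Gromov--Schoen factorization through an orbicurve. Two small points of imprecision are worth flagging. First, your appeal to ``the case where the first Betti numbers agree'' from Section~\ref{sec:ext} is misdirected: what you need here is the more elementary fact that when $K_i$ is f.g., a character pulled back from $\G_i$ lies in $\S^1(G)$ if and only if its image lies in $\S^1(\G_i)$, which does not require $b_1(G)=b_1(\G_i)$. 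Second, the passage from $\chi\notin\S^1(G)$ to a tree action is better phrased through Brown's characterization of $\S^1$ via exceptional actions on $\R$-trees rather than through Novikov homology; Delzant's argument proceeds via the former, which interfaces directly with the Gromov--Schoen machinery. Neither point is a genuine gap in a sketch at this level of detail.
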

Some remarks are in order. First, the reader can find the definition of hyperbolic orbisurface group in \cite{Bu11} (which is an excellent survey on the result above), but for our intents it is sufficient to know that they contain as finite index subgroup the fundamental group of an hyperbolic surface, which is known to have empty BNSR invariants. From this it follows that $\S^{1}(\G_i) = \emptyset$, which explains why all characters of $G$ that pull-back from the $\G_i$ are singular. What is not trivial is that all other characters are in $\S^{1}(G)$. Furthermore, unless there exists an $f_{i} \colon G \lr \G_i$ inducing an isomorphism in homology with rational coefficients (in which case we say that $X$, and by extension $G$, has {\em Albanese dimension 1} and $f_i$ can be proven to be essentially unique), then  $\S^1(G)$ is nonempty and, by openness, $G$ admits an algebraic fibration. Lastly one can prove that the subspaces $f_{i}^{*}(H^{1}(\G_{i},\Q)) \subset H^{1}(G;\Q)$ are pairwise orthogonal, see e.g. \cite{Ca03}

A rather informative example of application of Theorem \ref{thm:delzant} comes by looking at Kodaira fibrations. These are K\"ahler manifolds (more precisely, complex projective surfaces) which admit one (or more) structure of surface bundle over a surface in such a way that the fibration map is holomorphic. It follows that their fundamental group $G$ is a surface-by-surface group, and barring the case where these manifolds have Albanese dimension one, we have $\S^1(G)  \neq \emptyset$. This (and obviously much more) is a consequence from Theorem \ref{thm:delzant} and in our understanding a proof not involving the fact that these groups are K\"ahler did not exist. 

\section{Finiteness properties the fibers} \label{sec:main}
In light of Theorem \ref{thm:delzant} one can ask whether there is a proof that $\S^1(G)  \neq \emptyset$ for non-K\"ahler surface-by-surface groups. (For rather deep reasons, namely the Parshin-Arakelov rigidity, surface-by-surface bundles are ``generically" not K\"ahler.) More generally, one can ask for necessary conditions for an extension as in Equation (\ref{eq:ext}) to have nonempty higher BNSR invariants, using as tool only the fact that the group is an extension (as opposed to Theorem \ref{thm:delzant}, whose proof hinges on the K\"ahler assumption). 
The first results in that sense are contained in \cite{KW22,FV23}, which deal with the first BNSR invariant (i.e. $n_0=1$ in the notation below), and these are extended in \cite{KV23}.

\begin{theorem} \label{thm:main} \cite{KW22,FV23,KV23} Let $1 \to K \to G \to \G \to 1$ be a short exact sequence of groups such that $G$ and $K$ are of type $F_{n_0}$ (resp. $FP_{n_0}(R)$) and $b_1(\G) > 0$. Suppose that $\phi \colon K \to \Z$ is an epimorphism with kernel $\ker \phi = N$ of type $F_{n_0-1}$ (resp. $FP_{n_0-1}(R)$) such that  $\phi$  extends to a real homomorphism of $G$. Then there exists an epimorphism $\psi \colon G \to \Z$ that extends $\phi$ such that $\Ker \psi$ is of type $F_{n_0}$ (resp. $FP_{n_0}(R)$). Furthermore if $K$, $G$ and $N$ are of type $F_{\infty}$ (resp. $FP_{\infty}(R)$) then $\Ker \psi$ can be chosen of type $F_{\infty}$ (resp. $FP_{\infty}(R)$).
\end{theorem}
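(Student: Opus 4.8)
The plan is to translate the whole statement into the language of BNSR invariants via the Proposition relating finiteness of kernels to BNSR invariants, and then to use the Novikov--homology (Sikorav) description of these invariants together with the Lyndon--Hochschild--Serre spectral sequence of the extension. First, since $K$ has type $FP_{n_0}(R)$ and $N=\ker\phi$ has type $FP_{n_0-1}(R)$, the Proposition gives $\pm\phi\in\Sigma^{n_0-1}(K,R)$. The goal then becomes: produce an epimorphism $\psi\colon G\to\Z$ with $\ker\psi\cap K=N$ (i.e. $\psi|_K$ a positive multiple of $\phi$, which is $\phi$ itself whenever $\phi$ lifts integrally) and $\pm\psi\in\Sigma^{n_0}(G,R)$, so that the Proposition yields that $\ker\psi$ has type $FP_{n_0}(R)$. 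Letting $\Phi\colon G\to\R$ be the given real extension of $\phi$, the inflation--restriction sequence shows that the real homomorphisms of $G$ restricting to $\phi$ on $K$ form the affine subspace $A=\Phi+f^{*}H^{1}(\G;\R)$ of $H^{1}(G;\R)$, of dimension $b_1(\G)>0$; this positive-dimensional freedom is what the argument exploits.

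The heart of the matter is to locate a character in the positive cone on $A$ lying in $\Sigma^{n_0}(G,R)$. Here I would use the characterization $\chi\in\Sigma^{n}(G,R)$ iff the Novikov homology $H_i(G;\widehat{RG}_{\chi})$ vanishes for $i\le n$. For $\chi\in A$ one has $\chi|_K=\phi\neq0$, and as a left $RK$-module $\widehat{RG}_{\chi}$ decomposes as a $\G$-indexed, support-restricted product of copies of $\widehat{RK}_{\phi}$; since $K$ is of type $FP_{n_0}(R)$, homology commutes with this product in the relevant range of degrees, so $\pm\phi\in\Sigma^{n_0-1}(K,R)$ forces $H_q(K;\widehat{RG}_{\chi})=0$ for $0\le q\le n_0-1$ (the vanishing at $q=0$ being precisely the statement $\phi\neq0$). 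Feeding this into the spectral sequence $E^2_{p,q}=H_p(\G;H_q(K;\widehat{RG}_{\chi}))\Rightarrow H_{p+q}(G;\widehat{RG}_{\chi})$ collapses every row $0\le q\le n_0-1$, so that in total degree $\le n_0$ the only surviving term is the corner $E^2_{0,n_0}=H_{n_0}(K;\widehat{RG}_{\chi})_{\G}$, the $\G$-coinvariants of the top $K$-homology. Hence $\chi\in\Sigma^{n_0}(G,R)$ is equivalent to the vanishing of these coinvariants.

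The main obstacle, and the step where $b_1(\G)>0$ is genuinely used, is to kill this coinvariant module. The idea is that, having chosen the $\G$-component $\gamma_0\in H^{1}(\G;\R)$ of $\chi$ to be nonzero (possible exactly because $b_1(\G)>0$), one finds $g\in G$ with nontrivial image $\gamma\in\G$ and $\chi(g)>0$; then $1-g$ is a unit in $\widehat{RG}_{\chi}$, its inverse being the geometric series $\sum_{m\ge0}g^{m}$, which converges in the $\chi$-completion, and this invertibility forces $(\gamma-1)$ to act surjectively on $H_{n_0}(K;\widehat{RG}_{\chi})$, whence the coinvariants vanish. I expect the delicate point to be reconciling the diagonal $\G$-action on $H_{n_0}(K;\widehat{RG}_{\chi})$ --- which mixes the conjugation action on $K$ with multiplication on the coefficients --- with the ring-theoretic invertibility of $1-g$, and in particular checking that a sufficiently large, generic choice of $\gamma_0$ (rather than merely $\gamma_0\neq0$) makes the Novikov direction dominate the conjugation. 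Since the discussion is symmetric under $\chi\mapsto-\chi$ and $\pm\phi\in\Sigma^{n_0-1}(K,R)$, one obtains $\pm\chi\in\Sigma^{n_0}(G,R)$ simultaneously.

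Finally, I would extract an integral character and treat the $FP_\infty$ case. Because $\Sigma^{n_0}(G,R)$ is open and scale-invariant while $A$ is a rational affine subspace, the good set is an open cone containing characters $\Phi+t\,f^{*}\hat\gamma_0$ for all large $t$ and a fixed rational direction $\hat\gamma_0$; as these rays converge to $f^{*}\hat\gamma_0$, the lattice of integral characters with $\ker(\,\cdot\,)\cap K=N$ meets this cone, producing $\psi$ with $\ker\psi$ of type $FP_{n_0}(R)$. For the $FP_\infty(R)$ conclusion the point is that the coinvariant-killing mechanism is degree-independent --- it is the single invertibility of $1-g$ --- so once $\pm\phi\in\Sigma^{\infty}(K,R)$ clears all the lower rows of the spectral sequence for every $n$, one fixed generic integral $\psi$ satisfies $\pm\psi\in\Sigma^{n}(G,R)$ for all $n$, giving $\ker\psi$ of type $FP_\infty(R)$. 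The homotopical ($F_n$) statements follow along the same lines using the homotopical BNSR invariants, with finite presentation (the $n=2$ case) handled separately at the level of $\pi_1$.
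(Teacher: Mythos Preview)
Your approach via Sikorav's Novikov--homology criterion and the LHS spectral sequence is genuinely different from the paper's, and the reduction you carry out is correct up to the corner term: the vanishing of rows $0\le q\le n_0-1$ does follow from $\pm\phi\in\Sigma^{n_0-1}(K,R)$ together with the $FP_{n_0}$ hypothesis on $K$, and the identification $H_{n_0}(G;\widehat{RG}_\chi)\cong H_{n_0}(K;\widehat{RG}_\chi)_\Gamma$ is right. The problem is the step you yourself flag as delicate, and it is not a technicality but the entire content of the theorem. The invertibility of $1-g$ in $\widehat{RG}_\chi$ gives you that right multiplication by $1-g$ is bijective on $H_{n_0}(K;\widehat{RG}_\chi)$; it says nothing about the left $\Gamma$-action, which is the diagonal one mixing conjugation on $K$ with left multiplication on coefficients. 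These two actions commute, so the coinvariants are still a $\widehat{RG}_\chi$-module, but nothing forces that module to vanish. Your heuristic that ``for $\chi(g)$ large the Novikov direction dominates the conjugation'' is exactly the statement to be proved, and establishing it is where all the work lies; as written, the argument is circular at this point.

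The paper sidesteps this computation entirely. It replaces $\Gamma$ by a free group via the pull-back $\Pi$ in the diagram~(\ref{eq:epi}), then decomposes $\Pi$ as an amalgam $\Pi_1\ast_K\cdots\ast_K\Pi_n$ of HNN extensions $\Pi_i=K\rtimes_{f_i}\Z$. On each factor $\Pi_i$ the ``vertical'' character $\mu_i$ (the projection to $\Z$ with kernel $K$) lies in $\Sigma^{n_0}(\Pi_i)$ simply because $K$ is of type $FP_{n_0}$; openness of $\Sigma^{n_0}(\Pi_i)$ then gives that $[\phi+T\mu_i]\in\Sigma^{n_0}(\Pi_i)$ for $T$ large, with no need to analyze any coinvariants. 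A Mayer--Vietoris argument assembles these into a character on $\Pi$ with the right finiteness, and a separate step (of the Kuckuck type, as in the pro-$p$ Lemma~\ref{L1p}) pushes the conclusion down from $\Pi$ to $G$. So the paper trades your single hard module-theoretic computation for a structural reduction plus two soft inputs (openness and Mayer--Vietoris for amalgams) and one transfer lemma. If you want to rescue your approach, you would need an honest argument that the operator $1-\gamma$ on $H_{n_0}(K;\widehat{RG}_\chi)$ becomes invertible once the $\Gamma$-component of $\chi$ is large---for instance via a filtration on the Novikov ring by $\chi$-level, on whose associated graded the conjugation contribution disappears---but that is substantial and is not in your proposal.
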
 
\begin{remark} Note that when $n_0 = 1$, the existence of a $\phi \colon K \to \Z$ that extends to $G$ is equivalent to the condition that the inclusion-induced homomorphism $H_1(K) \to H_G$ (which, by the Lyndon-Hochschild-Serre sequence factorizes through the quotient map $H_1(K) \to H_1(K)_{\G}$) has image of positive rank, or equivalently $b_1(G) > b_1(\G)$. This condition is referred to by saying that the extension in Equation (\ref{eq:ext}) has excessive homology. As any $\phi \colon K \to \Z$ has kernel of type (at least) $F_0$, if the extension has excessive homology then $G$ admits an epimorphism $\psi \colon G \to \Z$ with kernel of type $F_1$, hence $\pm [\psi] \in \S^1(G)$. 
\end{remark}

To give an idea of the proof, start with a presentation of the base group $\G$, thought of as an epimorphism from the free group $F_n$ to $\G$ for suitable $n$. 
This epimorphism induces a diagram
\begin{equation} \label{eq:epi} \xymatrix{
1\ar[r]&
K \ar[r]\ar[d]_{\cong} &
\Pi \ar[d] \ar[r] & F_n \ar[d] \ar[r]&1\\
 1\ar[r]& K 
 \ar[r]& G \ar[r]  &
\G \ar[r]  &1 } \end{equation}
where all vertical maps are epimorphisms. 

Observe that we can write $\Pi$ as amalgamated product \[ \Pi = \Pi_{1} \ast_K \Pi_2 \ast_K \dots \ast_K \Pi_n \] where each $\Pi_i$ has the form of an HNN extension  $\Pi_i = \langle K,s_i|s_i k s_i^{-1} = f_i(k) \rangle$ for some automorphism $f_i\colon K \to K$. We can then work with the group $\Pi$: the advantage of doing so is that amalgamated products are, in a sense, easier to deal with in terms of study of the BNSR invariants. The condition on excessive homology allows one to establish the existence of algebraic fibrations on $\Pi$, by ``perturbing" any of the fibrations of the $\Pi_i$. (To do so, the openness, and the very use, of the BNSR invariants, is essential.) As long as $b_1(\G) > 0$ at least some of the fibrations descend to $G$, and with some delicate arguments one can control the finiteness properties of the fiber to the extent stated.

We want to discuss an interesting consequence of Theorem \ref{thm:main}, that follows from repeated application of this result to iterated extension. Omitting for sake of simplicity full generality (for which the reader is referred to \cite[Theorem 1.6]{KV23}) we will consider the following set up.
\begin{definition} A {\em normal filtration of length $n$} of a group $G$ is a collection of normal subgroups $G_i \unlhd G$, $0 \leq i \leq n$ such that \[ 1 = G_0 \unlhd G_1 \unlhd \ldots \unlhd G_{n-1} \unlhd G_n = G. \] We say that $G$ is poly-free (respectively poly-surface) if the factor groups $\G_{j+1} := G_{j+1}/G_{j}$, $0 \leq j \leq n-1$ are f.g. free (respectively surface groups).
\end{definition}  
\begin{remark} In literature the term poly-free and poly-surface usually applies also to the case where the filtration is just {\em subnormal}, i.e. each element of the filtration is normal in the successor, but not necessarily in $G$. The result below applies to the more general case, but statements can become quite cumbersome, so we limit ourselves to the class above.
    
\end{remark}

We have the following.
\begin{theorem} \label{thm:poly} \cite{KV23} Let $G$ be a poly-free or a poly-surface group of length $n$, and assume that for all $0 \leq j \leq n-1$ the sequence \begin{equation} \label{eq:excess} 1 \lr G_{j+1}/G_j \lr G/G_{j} \lr G/G_{j+1} \lr 1 \end{equation} has excessive homology. Then for every discrete character $\phi \colon G_1 \to \R$ that extends to a real character on $G$ there exists a discrete character $\psi \colon G \to \R$ extending $\phi$ with kernel of type $F_{n-1}$.
\end{theorem}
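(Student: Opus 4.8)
The plan is to prove the statement by induction on the length $n$, building a discrete character up the chain $G_1 \unlhd G_2 \unlhd \cdots \unlhd G_n = G$ and using Theorem~\ref{thm:main} as the inductive engine. At the $j$-th stage one applies Theorem~\ref{thm:main} to the short exact sequence
\[ 1 \lr G_j \lr G_{j+1} \lr \Gamma_{j+1} \lr 1, \]
where the fiber is $K = G_j$, the total group is $G_{j+1}$, and the base is $\Gamma_{j+1} = G_{j+1}/G_j$. The decisive feature is that each application raises the finiteness type of the kernel by exactly one, mirroring the passage from $F_{n_0-1}$ to $F_{n_0}$ in Theorem~\ref{thm:main}: starting from $\phi$ on $G_1$, whose kernel is trivially of type $F_0$, one obtains after $n-1$ steps a character on $G_n = G$ whose kernel is of type $F_{n-1}$.

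More precisely, I would maintain the inductive hypothesis that there is a discrete character $\psi_j \colon G_j \to \R$ with $\psi_j|_{G_1} = \phi$ and with $\ker \psi_j$ of type $F_{j-1}$, and feed $\psi_j$ into Theorem~\ref{thm:main} (with $n_0 = j$) to produce $\psi_{j+1} \colon G_{j+1} \to \R$ extending $\psi_j$ with $\ker \psi_{j+1}$ of type $F_j$. Two of the three hypotheses of Theorem~\ref{thm:main} are straightforward at every stage. First, $G_j$ and $G_{j+1}$ are themselves poly-free (resp. poly-surface), hence of type $F_\infty$ as iterated extensions of groups of type $F_\infty$, so in particular of type $F_j$. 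Second, the requirement $b_1(\Gamma_{j+1}) > 0$ is automatic: the excessive homology of \eqref{eq:excess} at stage $j$ means $b_1(G/G_j) > b_1(G/G_{j+1})$, and since the five-term exact sequence in homology identifies this jump with the image of $H_1(\Gamma_{j+1})$, the factor $\Gamma_{j+1}$ must have positive first Betti number. The base case $j=1$ is immediate, since every group is of type $F_0$.

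The delicate hypothesis, and the step I expect to be the main obstacle, is the requirement that the discrete character $\psi_j$ produced at stage $j$ \emph{extend to a real homomorphism of the next group} $G_{j+1}$, so that it is admissible as input to Theorem~\ref{thm:main} at the following stage. Theorem~\ref{thm:main} only guarantees that $\psi_{j+1}$ extends $\psi_j$, not that it can itself be extended upward. To control this I would anchor the whole construction to the ambient real character $\Phi \colon G \to \R$ extending $\phi$ furnished by hypothesis: its restrictions $\Phi|_{G_j}$ form a compatible family, each of which manifestly extends to $G_{j+1}$ (indeed to all of $G$) and restricts to $\phi$ on $G_1$. The idea is then to perform the perturbations of Theorem~\ref{thm:main} \emph{within} the linear subspace $V_j = \mathrm{im}\big(H^1(G_{j+1};\R) \to H^1(G_j;\R)\big)$ of extendable characters, keeping $\psi_j$ close to $\Phi|_{G_j}$. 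Here the openness of the BNSR invariants is essential: the condition that $\ker \psi_j$ be of type $F_{j-1}$ cuts out the open set $\Sigma^{j-1}(G_j) \cap -\Sigma^{j-1}(G_j) \subset S(G_j)$, so a sufficiently small discrete perturbation lying in $V_j$ and restricting to $\phi$ remains inside it. The role of the excessive homology assumption at each stage is to guarantee that there is enough room, inside $V_j$ and near $\Phi|_{G_j}$, to produce such a discrete approximation; reconciling this extendability constraint with the BNSR-openness constraint is the heart of the argument.

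Assembling the steps, the final character $\psi := \psi_n \colon G \to \R$ extends $\psi_{n-1}$, hence by transitivity extends $\psi_1 = \phi$, and has $\ker \psi$ of type $F_{n-1}$, as desired. The genuine work lies in the simultaneous control, as $j$ increases, of the three competing demands on $\psi_j$, namely that it restrict to $\phi$, that it lie in the open BNSR cone forcing the $F_{j-1}$ kernel, and that it remain extendable to the next level; it is precisely the interplay between the openness of the $\Sigma^{\bullet}$ and the excessive homology at every stage that makes this simultaneous control achievable.
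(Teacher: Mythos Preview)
Your proposal is correct and follows the same route as the paper's proof sketch: iterate Theorem~\ref{thm:main} along the filtration, producing at stage $j$ a discrete character $\psi_j$ on $G_{j}$ whose kernel has type $F_{j-1}$, and induct. The paper's sketch says only this much; you have correctly isolated the one nontrivial point hidden in that sentence, namely that the output $\psi_{j+1}$ of Theorem~\ref{thm:main} must itself extend to a real character of $G$ so that the induction can continue, and that this is arranged by perturbing inside the subspace of characters restricted from $G$, using openness of the BNSR invariants together with the excessive homology of~\eqref{eq:excess} to guarantee enough room for the perturbation. One small remark: the condition $b_1(\Gamma_{j+1})>0$ is already automatic for free and surface factors, so the genuine role of the excessive homology hypothesis is entirely the extendability issue you identify in your third paragraph, not the verification of that hypothesis of Theorem~\ref{thm:main}.
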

The strategy of the proof consists in iteratively identifying a discrete character on $G_{j+1}$ with kernel of increasing regularity, and induct on $0 \leq j \leq n-1$. 

The outcome of Theorems \ref{thm:main} and \ref{thm:poly} is that we can, under the assumption that the sequences in Equation \ref{eq:excess} have excessive homology, identify algebraic fibrations with (higher) finiteness property: for instance, a poly-free group of length $3$ with required excessive homology properties would contain an algebraic fiber of type $F_2$, namely finitely presented. 

One may want to ask if this result is optimal. The quick answer is negative: for instance $G = F_2 \times F_2 \times \Z$ admits an obvious fibration whose fiber has type $F$. Note that, in this case, the Euler characteristic of the group $G$ is zero. In fact, at least for poly-free and poly-surface groups the nonvanishing of the Euler characteristic is the only condition required for optimality of the results above.

This is a consequence of the following result, that originates from \cite[Theorem 7.2(5)]{Lu02}, and which is further discussed in \cite{Fi21,LIMP21}: 
\begin{theorem} \label{thm:lu} \cite{Lu02} Let $G$ be a f.g. group and assume that the $i$-th $L^2$-Betti number $b_{i}^{(2)}(G)$ is non-zero: then the fibers of all algebraic fibrations of $G$ cannot have type $FP_i(\Q)$.
\end{theorem}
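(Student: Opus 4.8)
The plan is to prove the contrapositive: assuming that an algebraic fibration $\phi \colon G \onto \Z$ has fiber $N = \Ker \phi$ of type $FP_i(\Q)$, I will show that $b_i^{(2)}(G) = 0$ (in fact $b_p^{(2)}(G) = 0$ for all $0 \le p \le i$). The engine is L\"uck's von Neumann dimension theory over the group von Neumann algebra $\caln(G)$, combined with the Wang sequence attached to the mapping-torus structure that an algebraic fibration automatically provides.

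First I would record the semidirect-product structure: since $\phi$ is an epimorphism onto $\Z = \langle t \rangle$ with finitely generated kernel $N$, we may write $G = N \rtimes_\alpha \Z$, where $\alpha \in \Aut(N)$ is conjugation by a fixed lift of the generator. Using that $N$ is of type $FP_i(\Q)$, I would choose a resolution $C_*$ of the trivial module $\Q$ over $\Q[N]$ that is finitely generated free in degrees $\le i$, and lift $\alpha$ to a chain self-map $\widetilde\alpha \colon C_* \to C_*$ covering it. Next, I would form the algebraic mapping torus: induce the complex up along $\Q[N] \hookrightarrow \Q[G]$ and take the mapping cone of $(t \cdot \widetilde\alpha - \id)$. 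This produces a free resolution $R_*$ of $\Q$ over $\Q[G]$ whose terms are finitely generated free for $p \le i$, so that $b_p^{(2)}(G)$ for $p \le i$ can be computed by applying $\caln(G) \otimes_{\Q[G]} (-)$ and taking von Neumann dimensions of homology. By the induction property of the dimension function, the homology of the induced fiber complex is identified, as a $\caln(G)$-module, with a space $H_p^{(2)}(N)$ of dimension $b_p^{(2)}(N) < \infty$ for $p \le i$, and the mapping-cone construction yields the Wang long exact sequence
\[ \cdots \lr H_p^{(2)}(N) \xrightarrow{\ \overline{t\widetilde\alpha - \id}\ } H_p^{(2)}(N) \lr H_p^{(2)}(G) \lr H_{p-1}^{(2)}(N) \lr \cdots \]

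The crux of the argument --- and the step I expect to be the main obstacle --- is to show that the self-map $\overline{t\widetilde\alpha - \id}$ of each $H_p^{(2)}(N)$ is a \emph{weak isomorphism}, i.e. has kernel and cokernel of von Neumann dimension zero. This is precisely where the hypothesis that the quotient $\Z$ is infinite enters: the operator is built from the unitary $t$ implementing the $\Z$-action, and under the spectral identification of this action with multiplication on an $\ell^2(\Z)$-valued space, $t\widetilde\alpha - \id$ corresponds to an operator whose symbol vanishes only on a measure-zero subset of the spectrum. Hence it is injective with dense image, and both $\dim_{\caln(G)} \ker$ and $\dim_{\caln(G)} \op{coker}$ vanish.

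Granting this, additivity of $\dim_{\caln(G)}$ along the (dimension-)exact Wang sequence forces $\dim_{\caln(G)} H_p^{(2)}(G) = 0$ for every $0 \le p \le i$; in particular $b_i^{(2)}(G) = 0$, contradicting the standing hypothesis $b_i^{(2)}(G) \neq 0$. The delicate points to watch are the bookkeeping in top degree $p = i$ --- where one must verify that only the finiteness $b_i^{(2)}(N), b_{i-1}^{(2)}(N) < \infty$ guaranteed by $FP_i(\Q)$ is needed, rather than finiteness one degree higher --- and the careful handling of the exactness and additivity statements within L\"uck's dimension theory over $\caln(G)$ for possibly infinitely generated modules, which is what makes the framework robust enough to bypass the analytic subtleties of reduced $L^2$-homology.
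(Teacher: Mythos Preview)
The paper does not supply its own proof of this statement; it is quoted as Theorem~7.2(5) of \cite{Lu02}, with \cite{Fi21,LIMP21} mentioned for further discussion. So there is nothing beyond that citation to compare against.

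Your outline is the standard route to this result in the case where the quotient is $\Z$, and it matches the approach underlying L\"uck's theorem: build the mapping-torus resolution from an $FP_i(\Q)$ resolution of $N$, apply $\caln(G)\otimes_{\Q G}(-)$, run the resulting Wang long exact sequence, and use additivity of von Neumann dimension. You also correctly flag the crux of the argument --- the weak-isomorphism property of the map induced by $t\widetilde\alpha-\id$ on $H_p(N;\caln(G))$ --- and your bookkeeping remark about only needing finite dimension in degrees $\le i$ is right.

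The one place where the sketch is imprecise is precisely that crux. Your heuristic that ``the symbol of $t\widetilde\alpha-\id$ vanishes on a measure-zero subset of the spectrum'' is correct when $\alpha$ is trivial, because then the operator is literally $t-1$ on an $L^\infty(S^1)$-factor. In the twisted case, however, $t$ is not central in $G$ and there is no clean Fourier decomposition that isolates an $\ell^2(\Z)$-tensor factor; the twist by $\alpha$ genuinely mixes the $N$- and $\Z$-directions. The actual argument proceeds differently: one unwinds the $G/N$-action on $H_p(N;\caln(G))$ as coming from right multiplication by the group element $t$ on the coefficient module $\caln(G)$, and then invokes the general results of L\"uck's dimension theory (flatness of $\caln(G)$ over $\caln(N)$, behaviour of dimension under induction, and vanishing of the dimension of coinvariants under an infinite quotient) rather than a direct spectral computation. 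With that substitution your proof goes through; as written, the weak-isomorphism step is asserted rather than proved.
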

This result has relevence in the case of poly-free and poly-surface groups in light of the following known fact, whose proof can be found e.g. in \cite{GN21,KV23}.
\begin{proposition} \label{prop:obs} Let $G$ be a poly-free (respectively poly-surface) group of length $n$. Then $b^{(2)}_{i}(G) =0$ for $0 \leq i \leq n-1$, and \[ b^{(2)}_{n}(G) = (-1)^{n}\chi(G) = (-1)^{n} \prod_{i=1}^{n} \chi (\G_i).\]
\end{proposition}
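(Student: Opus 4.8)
The plan is to argue by induction on the length $n$ of the filtration $1 = G_0 \unlhd G_1 \unlhd \ldots \unlhd G_n = G$, taking as inductive statement the sharper assertion that $b_i^{(2)}(G) = 0$ for every $i \neq n$ while $b_n^{(2)}(G) = (-1)^n \prod_{i=1}^n \chi(\G_i)$; that is, that the $L^2$-homology is \emph{concentrated} in the single degree $n$. For the base case $n = 1$ the group $G = \G_1$ is a f.g. free (resp. surface) group: these are infinite and of type $F$, their $L^2$-homology vanishes outside degree $1$ (for free groups because $\mathrm{cd}\, G = 1$, for surface groups by $L^2$-Poincar\'e duality together with $b_0^{(2)} = 0$), and $b_1^{(2)}(\G_1) = -\chi(\G_1)$, which is exactly the claim.

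For the inductive step I would view $G$ as the extension $1 \to G_{n-1} \to G \to \G_n \to 1$, where the fiber $G_{n-1}$ is poly-free (resp. poly-surface) of length $n-1$ and the base $\G_n$ is a single free (resp. surface) group. By the inductive hypothesis the fiber has $L^2$-homology concentrated in degree $n-1$, and the base has it concentrated in degree $1$. The main tool is then the multiplicativity of $L^2$-Betti numbers in extensions: running the $L^2$ Leray--Serre spectral sequence, concentration of the fiber forces all but one row of the $E_2$-page to vanish, so the spectral sequence degenerates and $H_m^{(2)}(G)$ is isolated in degree $m = n$. Computing the von Neumann dimension of the surviving term yields
\[ b_n^{(2)}(G) = b_{n-1}^{(2)}(G_{n-1}) \cdot b_1^{(2)}(\G_n) = \Big((-1)^{n-1}\prod_{i=1}^{n-1}\chi(\G_i)\Big)\big(-\chi(\G_n)\big) = (-1)^n \prod_{i=1}^n \chi(\G_i), \]
together with $b_i^{(2)}(G) = 0$ for $i \neq n$. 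All the groups involved are of type $F$, so the finiteness hypotheses needed to run the spectral sequence and the attendant dimension calculus are met.

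It remains to match the product with the Euler characteristic. Since $G$ has a finite classifying space, Atiyah's theorem identifies the ordinary Euler characteristic with the $L^2$-Euler characteristic, so $\chi(G) = \sum_i (-1)^i b_i^{(2)}(G) = (-1)^n b_n^{(2)}(G)$; independently, the Euler characteristic is multiplicative along the extensions in the filtration, giving $\chi(G) = \prod_{i=1}^n \chi(\G_i)$. Either identity turns the displayed formula into $b_n^{(2)}(G) = (-1)^n \chi(G)$, completing the induction.

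The delicate point, and the one I expect to be the main obstacle, is the dimension computation for the surviving entry of the spectral sequence. Its coefficients lie in the nontrivial Hilbert $\caln(G)$-module $H_{n-1}^{(2)}(G_{n-1})$ rather than in a constant module, so establishing that its von Neumann dimension factors as the \emph{product} $b_{n-1}^{(2)}(G_{n-1})\, b_1^{(2)}(\G_n)$ requires L\"uck's dimension theory (additivity of $\dim_{\caln(G)}$ and the behaviour of induction along $G_{n-1} \to G$). One may instead bypass the two-sided concentration statement by combining a one-sided vanishing result (vanishing of $b_i^{(2)}(G)$ for $i < n$ inherited from the fiber) with a separate vanishing-above-$n$ argument — cohomological dimension $n$ in the poly-free case, $L^2$-Poincar\'e duality in dimension $2n$ in the poly-surface case — and then pin down the middle degree via Atiyah's theorem as above.
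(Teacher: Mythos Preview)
The paper does not actually prove this proposition; it simply records it as ``a known fact, whose proof can be found e.g.\ in \cite{GN21,KV23}''. So there is no in-text argument to compare against, and your outline should be judged on its own merits.

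Your approach is correct, and in fact the ``bypass'' you sketch in the final paragraph is the cleaner proof and the one you should lead with. Concretely: (i) iterating L\"uck's vanishing result for extensions (if $N \unlhd G$ is of type $F$ with $b_i^{(2)}(N) = 0$ for $i \leq d-1$, then the same holds for $G$) along the filtration yields $b_i^{(2)}(G) = 0$ for $i < n$; (ii) in the poly-free case $\mathrm{cd}(G) \leq n$ forces vanishing for $i > n$, while in the poly-surface case $G$ is a $PD_{2n}$ group and $L^2$--Poincar\'e duality transports the vanishing from $i < n$ to $i > n$; (iii) Atiyah's formula $\chi(G) = \sum_i (-1)^i b_i^{(2)}(G) = (-1)^n b_n^{(2)}(G)$, combined with the multiplicativity $\chi(G) = \prod_i \chi(\G_i)$, pins down the remaining degree. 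This is self-contained and avoids the coefficient issue entirely.

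By contrast, your ``main'' route through the exact product formula $b_n^{(2)}(G) = b_{n-1}^{(2)}(G_{n-1}) \cdot b_1^{(2)}(\G_n)$ is morally right but, as you correctly flag, the dimension calculation for the surviving $E_2$-term with nontrivial Hilbert-module coefficients is genuinely more delicate than the rest of the argument. Since the bypass already gives the full statement, there is no need to push that computation through.
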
 (Note that for poly-surface groups this statement amounts to the fact that an aspherical manifold carrying $G$ as fundamental group satisfies the Singer Conjecture.)

It follows that, unless one of the factor groups is $\Z$ (in the poly-free case) or $\Z^2$ (in the surface case) then an algebraic fiber cannot have type $FP_n(\Q)$ nor, {\em a fortiori}, $F_n$. In particular, the fundamental group $G$ of a surface bundle over a surface where both base and fiber have genus at least $2$ and the first Betti number  of the total space is strictly bigger than that of the base admits an algebraic fibration, so that $\S^{1}(G) \neq \emptyset$, consistently with the case of Kodaira fibrations certified by Theorem \ref{thm:delzant}.

There's a interesting class of groups where the assumption on excessive homology is easily verified, namely the pure braid groups. The pure mapping class group $P(n+1)$ in $n \geq 1$ strand is defined as the kernel of the epimorphism $B(n+1) \to S_{n+1}$ from the braid group in $n+1$ strands to the symmetric group, and geometrically corresponds to braids that do not permute their ends. While the BNSR invariants of the braid group $B(n+1)$ are rather uninteresting ($H_1(B(n+1);\Z) = \Z$ and it is not hard to show that the commutator subgroup has type $F_{\infty}$, hence $\Sigma^{m}(B(n+1)) = S(B(n+1))$ for all $m$) they are much richer for $P(n+1)$. 
For these groups, the first BNSR invariant has been completely elucidated in \cite{KMM15}, and some partial results on higher BNSR invariants appear in \cite{Za17} using Morse theory. We can essentially recast analog results using Theorems \ref{thm:main} and Theorem \ref{thm:lu}:

\begin{theorem} \cite{Za17,KV23} Let $G = P(n+1)$ be the pure braid group on $n+1$ strands; then for all $1 \leq m \leq n-2$, $G$ contains an algebraic fibration with fiber of type $F_m$ but not of type $FP_{m+1}$.
\end{theorem}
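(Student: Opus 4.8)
The plan is to realize the pure braid group $P(n+1)$ as an iterated extension so that Theorems \ref{thm:main}, \ref{thm:poly}, and \ref{thm:lu} all apply. The crucial structural fact is the Fadell--Neuwirth fibration: forgetting the last strand gives an epimorphism $P(n+1) \to P(n)$ whose kernel is a free group $F_n$ (the fundamental group of a plane with $n$ punctures). Iterating this, one obtains a normal filtration
\[ 1 = G_0 \unlhd G_1 \unlhd \ldots \unlhd G_{n} = P(n+1) \]
exhibiting $P(n+1)$ as a poly-free group of length $n$, with each factor $\G_j := G_j/G_{j-1}$ a free group $F_j$ of rank $j$. First I would verify that these subgroups are genuinely \emph{normal} in the full group (not merely subnormal) so that Theorem \ref{thm:poly} applies directly; this is classical for the lower central structure of pure braids but worth stating.

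**Second**, I would check the excessive homology hypothesis for each quotient sequence in Equation (\ref{eq:excess}). For a poly-free group this reduces, by the remark following Theorem \ref{thm:main}, to the inequality $b_1(G/G_{j-1}) > b_1(G/G_j)$ for each $j$. Here the key input is that the successive free factors $F_j$ have rank $j \geq 1$, so each fibration-induced map on $H_1$ genuinely increases the first Betti number; one can make this explicit using the well-known fact that $H_1(P(n+1);\Z) \cong \Z^{\binom{n+1}{2}}$ together with the compatibility of the Fadell--Neuwirth projections with abelianization. Granting excessive homology, Theorem \ref{thm:poly} (applied to the length-$m+1$ sub-extension $G_{m+1} \unlhd P(n+1)$, or Theorem \ref{thm:main} applied inductively) produces a discrete character $\psi \colon P(n+1) \to \Z$ whose kernel has type $F_m$, for $m$ up to $n-1$. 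This is the positive (existence) half of the statement.

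**Third**, for the sharpness half---that the fiber is \emph{not} of type $FP_{m+1}$---I would invoke Theorem \ref{thm:lu} via Proposition \ref{prop:obs}. Since $P(n+1)$ is poly-free of length $n$ with all factors $F_j$ of rank $j \geq 2$ for $j \geq 2$ (and the single rank-one factor being $F_1 = \Z$), I must locate the first nonvanishing $L^2$-Betti number. Proposition \ref{prop:obs} gives $b^{(2)}_i(P(n+1)) = 0$ for $0 \le i \le n-1$ and $b^{(2)}_n(P(n+1)) = (-1)^n \prod_{j=1}^n \chi(F_j)$; since $\chi(F_j) = 1 - j$ vanishes exactly when $j = 1$, the top $L^2$-Betti number vanishes because of the $F_1 = \Z$ factor. **The main obstacle**---and the reason the range is $1 \le m \le n-2$ rather than $n-1$---is precisely this vanishing: one cannot obstruct at the top level using the full group. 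The resolution is to apply Theorem \ref{thm:lu} not to $P(n+1)$ itself but to an appropriate quotient or sub-extension that strips off the abelian factor, i.e. to a poly-free group whose factors all have rank $\geq 2$, so that the first nonzero $L^2$-Betti number appears in degree $m+1 \leq n-1$; the nonvanishing of $\chi$ for such a group then forbids type $FP_{m+1}(\Q)$ for its fibers, and one transfers this obstruction back to $P(n+1)$. Carefully matching the degree shift in Theorem \ref{thm:lu} with the finiteness type produced by Theorem \ref{thm:poly}, and handling the bookkeeping of which intermediate extension carries the nonzero Euler characteristic, is where the genuine work lies.
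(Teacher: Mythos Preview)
Your overall strategy matches the paper's, but you miss the one structural fact that turns the vague ``transfer'' step into a triviality: the paper uses the splitting $P(n+1) \cong K(n+1) \times \Z$, where $K(n+1) \cong P(n+1)/Z(P(n+1))$ is isomorphic to the penultimate filtration term $G_{n-1} = \ker(P(n+1) \to P(2))$. Once you have this direct product decomposition, ``stripping off the abelian factor'' is not a quotient or sub-extension argument at all: any fibration $\phi \colon K(n+1) \to \Z$ extends to $P(n+1)$ as $(\phi,0)$ with kernel $\ker\phi \times \Z$, so the finiteness type is preserved exactly. Since $K(n+1)$ is poly-free of length $n-1$ with \emph{no} abelian factors, Proposition~\ref{prop:obs} gives $b^{(2)}_{n-1}(K(n+1)) \neq 0$, and the machinery of Theorems~\ref{thm:main}--\ref{thm:lu} applies directly to $K(n+1)$. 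Your proposal to work with auxiliary length-$(m+1)$ groups and then ``transfer back'' is not wrong in principle, but you do not indicate \emph{how} the obstruction transfers (it does not go from subgroups to overgroups in general), and the paper avoids the issue entirely via the direct product.

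There is also an indexing slip that causes substantive confusion: with the Fadell--Neuwirth filtration $G_i = \ker(P(n+1) \to P(n+1-i))$, the factor groups are $\G_j = F_{n+1-j}$, not $F_j$. Thus the rank-one factor is $\G_n = F_1 = \Z$ at the \emph{top} of the filtration (it is the center of $P(n+1)$), not at the bottom. This is why $K(n+1) = G_{n-1}$, rather than some quotient by $G_1$, is the relevant group. Finally, the paper justifies excessive homology not by the formula $H_1(P(n+1)) \cong \Z^{\binom{n+1}{2}}$ (though that would work) but by the stronger fact that the monodromy of each Fadell--Neuwirth extension acts trivially on the homology of the fibre, so that homologically the tower behaves as a direct product.
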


The key points of our proof are the following. First, interpreting $P(n+1)$ as the fundamental group of the ordered configuration space of $(n+1)$ points on the plane, we have as a consequence of the Fadell--Neuwirth fibration (a point--forgetting map) a way to describe $P(n+1)$ iteratively as \begin{equation} \label{eq:fn} 1 \lr F_{q} \lr P(q+1) \lr P(q) \lr 1, \end{equation} which together with the fact that $P(2) = \Z$ allows to interpret $P(n+1)$ as poly-free group of length $n$, with normal filtration given by \[ G_i = \ker(P(n+1) \to P(n+1 - i))\] and factor groups $\G_i = F_{n+1-i}$. Note that this entails that $\chi(P(n)) = 0$. However, we have an isomorphism \[ P(n+1) \cong K(n+1) \times \Z \] where $K(n+1) \cong P(n+1)/Z(P(n+1))$ is isomorphic to the penultimate term $G_{n-1} = \ker(P(n+1) \to P(2))$ of the filtration of $P(n+1)$, see \cite[Seection I.9]{FM12}. The group $K(n+1)$ is then poly-free of length $n-1$, and none of its factor groups is free abelian, so its $(n-1)$--th $L^2$--Betti number does not vanish. Finally, it is well--known that the monodromy of all the extensions in Equation \ref{eq:fn} acts trivially on the homology of the fiber, so homologically $P(n+1)$ (and $K(n+1)$) behaves as a direct product as far as the first homology group is concerned. It's a straightforward consequence of this fact that the excessive homology in the sequences in Equation (\ref{eq:excess}) is nontrivial (actually, maximal) so we can apply Theorem \ref{thm:main} to $K(n+1)$ (hence to $P(n+1)$) and obtain the result desired.

Applications of the machinery of Theorems \ref{thm:poly} and \ref{thm:lu} allows one also to get some information, via purely group--theoretical techniques, on higher BNSR invariants of some K\"ahler groups. We state an instance of this:

\begin{theorem} \cite{KV23,LIP23} There exist (complex projective) iterated Kodaira fibrations of complex dimension $n$ with (K\"ahler) fundamental group $G$ whose BNSR invariants are properly nested in the form \[ \S^{0}(G) \supsetneq \S^{1}(G) \supsetneq \ldots \supsetneq \S^{n-1}(G) \supsetneq \S^{n}(G) \]
\end{theorem}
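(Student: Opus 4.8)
The plan is to realize $G$ as the fundamental group of an iterated Kodaira fibration $X_n \lr X_{n-1} \lr \dots \lr X_1$, so that $G$ is simultaneously Kähler and poly-surface of length $n$, with canonical filtration $1 = G_0 \unlhd G_1 \unlhd \dots \unlhd G_n = G$, and to arrange that \emph{every} stage of this filtration has excessive homology. Since the BNSR invariants are always nested, $\S^{k}(G) \subseteq \S^{k-1}(G)$ for all $k$, and to prove the chain strict it suffices to exhibit, for each $1 \le k \le n$, a single character lying in $\S^{k-1}(G) \setminus \S^{k}(G)$. I would build each such witness not on $G$ directly but on an intermediate stage, and then transport it by pull-back.

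For the setup, recall that each intermediate variety $X_k$ is itself a $k$-dimensional iterated Kodaira fibration, with $\pi_1(X_k) = Q^{(k)} := G/G_{n-k}$, a poly-surface group of length $k$ whose kernel $G_{n-k}$ is poly-surface, hence of type $F_\infty$. The filtration of $Q^{(k)}$ is the top portion of that of $G$, so the excessive homology hypothesis is inherited stage by stage, and \cref{thm:poly} applies to each $Q^{(k)}$. Moreover \cref{prop:obs} gives
\[ b^{(2)}_{k}(Q^{(k)}) = (-1)^{k}\prod_{i=n-k+1}^{n}\chi(\G_i) \neq 0, \]
since each factor group $\G_i$ is a surface of negative Euler characteristic.

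The witness at level $k$ is then produced on $Q^{(k)}$ and pulled back. Applying \cref{thm:poly} to $Q^{(k)}$ yields a discrete character $\bar\chi_k$ on $Q^{(k)}$ with kernel of type $F_{k-1}$, whence, by the Proposition relating finiteness of fibers to BNSR invariants, $\pm\bar\chi_k \in \S^{k-1}(Q^{(k)})$. On the other hand, the nonvanishing of $b^{(2)}_{k}(Q^{(k)})$ together with \cref{thm:lu} forbids the fiber from being of type $FP_k(\Q)$, so $\pm\bar\chi_k$ cannot both lie in $\S^{k}(Q^{(k)},\Q)$; replacing $\bar\chi_k$ by $-\bar\chi_k$ if necessary, we may assume $\bar\chi_k \notin \S^{k}(Q^{(k)},\Q)$, and \emph{a fortiori} $\bar\chi_k \notin \S^{k}(Q^{(k)})$. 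Now pull back along $f_k \co G \onto Q^{(k)}$ and set $\chi_k := f_k^{*}\bar\chi_k$. Because $G_{n-k}$ is of type $F_\infty$, the finiteness properties of a pull-back character match those of the character downstairs in every degree $m$: the implication $\bar\chi_k \in \S^{k-1}(Q^{(k)}) \Rightarrow \chi_k \in \S^{k-1}(G)$ is the standard fact that base characters preserve finiteness, while the implication $\bar\chi_k \notin \S^{k}(Q^{(k)},\Q) \Rightarrow \chi_k \notin \S^{k}(G,\Q)$ is its converse, which follows from comparing the submonoids $G_{\chi_k}$ and $Q^{(k)}_{\bar\chi_k}$ through the $FP_\infty$ kernel. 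Thus $\chi_k \in \S^{k-1}(G) \setminus \S^{k}(G)$, giving $\S^{k-1}(G) \supsetneq \S^{k}(G)$. Letting $k$ range over $1,\dots,n$ (with $Q^{(n)} = G$, so that the top gap comes directly from $b^{(2)}_{n}(G) \neq 0$ without any pull-back) assembles the full chain; the case $k=1$ recovers, consistently with \cref{thm:delzant}, the removal of the pull-backs from the base orbisurface.

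The genuinely hard part is the very first step: producing, in every dimension $n$, an iterated Kodaira fibration whose \emph{entire} filtration has excessive homology. This is a tension between two opposing demands. On one side, the non-isotriviality that makes each stage a bona fide Kodaira fibration pushes toward homological rigidity and small first Betti numbers; on the other, the condition $b_1(G/G_j) > b_1(G/G_{j+1})$ requires that each surface fiber contribute to the first homology of the total space, i.e. that the monodromy act with nontrivial coinvariants on the first homology of the fiber so that the Albanese dimension strictly grows at each stage. Arranging both simultaneously, and compatibly all the way up the tower, is the technical heart of the construction, and is where I would rely on the explicit families of \cite{LIP23}; once such examples are in hand, the BNSR bookkeeping above is a formal consequence of \cref{thm:poly} and \cref{thm:lu}.
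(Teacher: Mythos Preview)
Your proposal is correct and follows the same strategy the paper sketches (and defers to \cite{KV23} for details): apply \cref{thm:poly} and \cref{thm:lu} to the intermediate poly-surface quotients $Q^{(k)}$ and pull back through the $FP_\infty$ kernel, exactly parallel to the pure braid group argument. One minor point of attribution: the iterated Kodaira fibrations with excessive homology at every stage are constructed in \cite{KV23}, whereas \cite{LIP23}, which you cite for the families, is the source the paper credits with a genuinely \emph{different} approach (complex Morse theory) rather than the BNSR bookkeeping you carry out.
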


(Much as in the case of the pure braid groups, all the ``jumps" can be obtained by using algebraic fibrations.) A proof of this theorem using the results of this paper appears in \cite{KV23}, while a proof based on a different approach appears in \cite{LIP23}

It is interesting to compare at this point the statements of Theorems \ref{thm:main} and \ref{thm:poly} to those obtained by Kielak and Fisher in the realm of virtually RFRS group. Recall the following 
\begin{definition} A group $H$ is residually finite rationally solvable (RFRS) if there exists a normal filtration $H_i \unlhd_f H$ such that $\cap_{i} H_i = \{1\}$ and the maps $H_{i} \to H_i/H_{i+1}$ factorize through the maximal free abelian quotient maps.
\end{definition} 
In this realm Kielak (\cite{Kie20} for $n=1$) and then Fisher (\cite{Fi21} for $n > 1$) proved the following:
\begin{theorem} \cite{Kie20,Fi21} \label{thm:kf} Let $H$ be a virtually RFRS group of type $FP_{n}(\Q)$. Then there exist a finite index subgroup of $H$ which admits an algebraic fibration with fiber of type $FP_{n}(\Q)$ if and only if the $L^2$-Betti numbers satisfy $b_{i}(H) = 0$ for $0 \leq i \leq n$
\end{theorem}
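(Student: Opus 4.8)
The statement is a biconditional whose two implications are of very different natures, so the plan is to treat them separately, and only the sufficiency direction needs the RFRS hypothesis. For the necessity direction (existence of the fibration implies vanishing of the $L^2$-Betti numbers) it is essentially Theorem~\ref{thm:lu} combined with the multiplicativity of $L^2$-Betti numbers under finite index. Concretely, suppose a finite-index subgroup $H'\le H$ carries an algebraic fibration $\psi\colon H'\onto\Z$ with $\ker\psi$ of type $FP_n(\Q)$; then $\ker\psi$ is of type $FP_i(\Q)$ for every $0\le i\le n$, so the contrapositive of Theorem~\ref{thm:lu} forces $b_i^{(2)}(H')=0$ for all such $i$. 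Since $b_i^{(2)}(H')=[H:H']\,b_i^{(2)}(H)$ by L\"uck's multiplicativity formula and the index is finite, I conclude $b_i^{(2)}(H)=0$ for $0\le i\le n$.

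The sufficiency direction is the substantial one. First I would replace $H$ by a genuinely RFRS finite-index subgroup $G$; this is harmless, since finite-index subgroups of $FP_n(\Q)$ groups remain $FP_n(\Q)$ and, by multiplicativity again, $b_i^{(2)}(G)=0$ for $0\le i\le n$. It then suffices to fiber $G$ (after a possible further finite-index passage). Next I would recast the goal homologically: by the Proposition combining \cite{BR88,R88,R89}, a discrete character $\phi$ on a finite-index $G'\le G$ has fiber of type $FP_n(\Q)$ exactly when $\pm[\phi]\in\Sigma^n(G';\Q)$, and by a Sikorav-type (Novikov, or agrarian) criterion this is equivalent to the vanishing of both Novikov homologies $H_i(G';\widehat{\Q G'}^{\,\phi})$ and $H_i(G';\widehat{\Q G'}^{\,-\phi})$ in all degrees $0\le i\le n$.

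The engine linking the $L^2$-hypothesis to this Novikov criterion is the agrarian computation of $L^2$-Betti numbers. The RFRS tower supplies an embedding of $\Q G$ into a skew field $\mathcal{D}$ (a Linnell/Ore field) with respect to which the $L^2$-Betti numbers are recovered as the $\mathcal{D}$-dimensions of $H_i(G;\mathcal{D})$; the hypothesis therefore says that the $\mathcal{D}$-completed chain complex of a partial free resolution of $G$ is exact in degrees $\le n$. The hard part, and the true content of the theorem, is to descend this exactness over the ``global'' field $\mathcal{D}$ to vanishing of Novikov homology along a single rational direction. Here I would use that the RFRS filtration presents $\mathcal{D}$ as a limit over the free-abelian quotients $G/G_{i+1}$, and that the relevant obstruction organizes into a rational polytope in $H^1(G;\R)$; the $L^2$-vanishing forces this polytope to be large enough that a generic rational character $\phi$ — a vertex, or a direction in the open cone it cuts out — annihilates the Novikov homology simultaneously for $+\phi$ and $-\phi$ up to degree $n$. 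Passing to a finite-index $G'$ makes $\phi$ integral and compatible with the tower, and the openness of the $\Sigma^n$-condition guarantees the robustness of the choice; unwinding, $\ker\phi\unlhd G'\le H$ is the sought fiber of type $FP_n(\Q)$.

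I expect this descent to be the main obstacle. Vanishing of $L^2$-Betti numbers is an \emph{averaged} statement over the von Neumann algebra (equivalently over $\mathcal{D}$), blind to any particular direction of $H^1(G;\R)$, whereas algebraic fibering demands exactness over the Novikov ring of one rational character. The RFRS hypothesis is precisely what makes the two comparable, as it furnishes the cofinal tower of free-abelian covers approximating $\mathcal{D}$ and renders the governing polytope rational, so that the $L^2$-vanishing can be traded for the nonemptiness of a rational cone of fibered characters. For $n=1$ this is Kielak's polytope argument \cite{Kie20}; the extra difficulty for general $n$, handled by Fisher \cite{Fi21}, is to control the whole truncated chain complex at once rather than a single differential — that is, to run the agrarian/polytope argument in all degrees $\le n$ simultaneously while keeping both $\pm\phi$ in $\Sigma^n$.
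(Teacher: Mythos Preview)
The paper does not contain a proof of Theorem~\ref{thm:kf}: this is a survey, and the theorem is simply quoted from \cite{Kie20,Fi21} with attribution and no argument. So there is no ``paper's own proof'' to compare against.

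That said, your sketch is a fair high-level roadmap of the actual Kielak--Fisher argument. The necessity direction via Theorem~\ref{thm:lu} and multiplicativity of $L^2$-Betti numbers is correct and complete as stated. For sufficiency you have identified the right ingredients: the passage to a genuine RFRS subgroup, the Sikorav-type translation of $\pm[\phi]\in\Sigma^n(G';\Q)$ into vanishing of Novikov homology, the Linnell skew field $\mathcal{D}$ realizing $L^2$-Betti numbers as $\mathcal{D}$-dimensions, and the descent from $\mathcal{D}$-acyclicity to Novikov acyclicity along some rational direction. You are also right that this descent is the heart of the matter and that RFRS is what makes it work.

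Where your account remains a sketch rather than a proof is precisely in that descent step. The phrase ``the relevant obstruction organizes into a rational polytope'' and ``the $L^2$-vanishing forces this polytope to be large enough'' hides all of the work: one must actually construct the polytope (via Newton polytopes of matrices over twisted group rings, or Fisher's agrarian variant), prove that it governs Novikov vanishing in the claimed way, and show that the RFRS tower allows one to trade $\mathcal{D}$-acyclicity for the existence of a suitable face or cone. None of this is routine, and your paragraph does not indicate how it would go. As written, the proposal is an accurate table of contents for \cite{Kie20,Fi21} rather than an independent proof.
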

The obstruction to higher regularity, of course, is consistent with what is prescribed by Theorem \ref{thm:lu}. On the other hand, one may ask about the overlap between the statements of Theorems \ref{thm:main} or \ref{thm:poly} and that of Theorem \ref{thm:kf} as far as the construction of an algebraic fibration with the regularity prescribed. While a complete picture still eludes us, the overlap in the domain of application of these two types of results is likely to be quite limited. For instance, while direct products of virtually RFRS groups (e.g. free groups) are virtually RFRS, it is possible to prove (for rather non-trivial reasons, see \cite{K-V-W}) that there are free-by-free groups that are not virtually fibered, hence not virtually RFRS. This lends to the expectation that poly-free groups (and more generally nontrivial extensions) likely fail to be virtually RFRS.

One way to use the results discussed here is in the realm of higher coherence of groups. 
Recall that a group $G$ is called {\em coherent} if all its finitely generated subgroups are finitely presented. We say that a group is $m$--coherent if all its subgroups of type $F_m$ are of type $F_{m+1}$. A combination of Theorem \ref{thm:poly} and Theorem \ref{thm:lu} yields the following
\begin{corollary} \label{cor:only} Let $G$ be a poly-free or a poly-surface of length $n$ satisfying the excessive homology conditions of Equation (\ref{eq:excess}) with nonvanishing Euler characteristic: then for all $0 \leq m \leq n-1$, $G$ is not $m$-coherent. 
\end{corollary}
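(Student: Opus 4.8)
The plan is to certify the failure of $m$-coherence, for each $0 \le m \le n-1$, by exhibiting an explicit subgroup $H \le G$ of type $F_m$ but not of type $F_{m+1}$ -- this is precisely what the negation of $m$-coherence demands. The natural candidate for $H$ is the fiber of an algebraic fibration of the filtration subgroup $G_{m+1}$, which is itself poly-free (resp. poly-surface) of length $m+1$, with $b_1$ decreasing strictly along the inherited tower.

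First I would produce the relevant fibrations not by applying Theorem \ref{thm:poly} to $G_{m+1}$ in isolation, but by reading them off from the proof of Theorem \ref{thm:poly} run on $G$ itself. That proof proceeds by induction on $0 \le j \le n-1$, at the $j$-th stage identifying a discrete character $\psi_{j+1} \colon G_{j+1} \to \R$ extending the previous one, whose kernel $N_{j+1} := \Ker \psi_{j+1}$ has type $F_j$. Since the construction of $\psi_{j+1}$ invokes only the excessive homology conditions of Equation (\ref{eq:excess}) for indices $\le j$, all of which are among the hypotheses placed on $G$, I would extract, for $m=j$, the subgroup $N_{m+1} \le G_{m+1} \le G$ of type $F_m$.

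It then remains to rule out type $F_{m+1}$ for $N_{m+1}$, and here the $L^2$-obstruction does the work uniformly. For $m \ge 1$ the group $N_{m+1}$ has type $F_m$, hence is finitely generated, so $\psi_{m+1}$ is a genuine algebraic fibration of $G_{m+1}$ with fiber $N_{m+1}$. By Proposition \ref{prop:obs}, the assumption $\chi(G) = \prod_{i=1}^{n}\chi(\G_i) \ne 0$ forces each $\chi(\G_i) \ne 0$, whence $b^{(2)}_{m+1}(G_{m+1}) = (-1)^{m+1}\prod_{i=1}^{m+1}\chi(\G_i) \ne 0$; Theorem \ref{thm:lu} then prohibits any algebraic fiber of $G_{m+1}$ from having type $FP_{m+1}(\Q)$, and a fortiori type $F_{m+1}$. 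The case $m=0$ falls under the same obstruction: $G_1 = \G_1$ is a nonabelian free or hyperbolic surface group (again since $\chi(\G_1)\ne 0$), so $b^{(2)}_{1}(G_1) \ne 0$, and the kernel of any epimorphism $G_1 \to \Z$ cannot be finitely generated, giving a subgroup of type $F_0$ but not $F_1$ directly.

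The step I expect to be the main obstacle is the bookkeeping around these intermediate fibrations: one must verify that the inductive construction of Theorem \ref{thm:poly}, carried out on $G$, genuinely deposits at each stage an algebraic fibration of the subgroup $G_{m+1}$ (with finitely generated kernel for $m \ge 1$), rather than only the terminal character on $G$. Equivalently, the delicate point is that the excessive homology conditions on $G$ -- and not the a priori different conditions internal to $G_{m+1}$, which are not hypothesized -- suffice to run the induction up to level $m+1$. Once this is granted, Theorem \ref{thm:lu} functions as a black box and the conclusion that $G$ is not $m$-coherent for every $0 \le m \le n-1$ is immediate.
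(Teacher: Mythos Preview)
Your approach is correct and is essentially what the paper intends: the paper only states that the corollary follows by combining Theorems \ref{thm:poly} and \ref{thm:lu}, without spelling out the case-by-case argument for each $m$, and your reconstruction---extracting the intermediate fibration of $G_{m+1}$ with fiber of type $F_m$ from the inductive proof of Theorem \ref{thm:poly} run on $G$, then applying Proposition \ref{prop:obs} and Theorem \ref{thm:lu} to $G_{m+1}$---is exactly the intended mechanism. Your identification of the delicate point (that the excessive homology hypotheses are those of Equation (\ref{eq:excess}) for $G$, not the a priori different ones internal to $G_{m+1}$, so one must read off the intermediate stages rather than invoke Theorem \ref{thm:poly} afresh on $G_{m+1}$) is well taken and matches the paper's description of the proof strategy for Theorem \ref{thm:poly}.
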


By definition, $m$--coherence descends to subgroups, so to prove that a group is not $m$-coherent we just need to find a poison subgroup that fails this property. Corollary \ref{cor:only} is, in essence, a recipe to build poison groups out of (suitable) poly-free and poly-surface groups without abelian factor groups. This idea, of course, is  not new, and the use of embeddings of $F_2 \times F_2$ (or other direct products) is a simple but useful tool to prove incoherence. 

Unsurprisingly, however, direct products of free (or surface) groups cannot always be used to get optimal results, so the strategy outlined above adds some new results that are out of reach otherwise. An instance is the following application to automorphism groups of free groups.

\begin{theorem} \cite{V23} For $n \geq 3$ there exist an algebraically fibered poly-free group of nonvanishing Euler characteristic of length $2n-2$ (respectively $2n-3$), which embeds in $\An$ (respectively $\On$), such that the sequences in Equation (\ref{eq:excess}) have excessive homology. Therefore $\An$ (respectively $\On$) is not $m$-coherent for $0 \leq m \leq (2n-3)$ (respectively $0 \leq m \leq (2n-4)$).
\end{theorem}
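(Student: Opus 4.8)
The second assertion is a formal consequence of the first together with Corollary \ref{cor:only}: a poly-free group of length $\ell$ satisfying the excessive-homology hypotheses of Equation (\ref{eq:excess}) and having nonvanishing Euler characteristic fails to be $m$-coherent for all $0 \le m \le \ell-1$, and since $m$-coherence is inherited by subgroups, it is enough to embed such a ``poison'' group, of length $\ell = 2n-2$ (resp.\ $\ell = 2n-3$), into $\An$ (resp.\ $\On$). So the whole problem is the construction of this subgroup. A useful orienting remark is that $2n-2$ is precisely the virtual cohomological dimension of $\An$, so a poly-free subgroup of that length is as long as conceivably possible; the construction is therefore necessarily tight.

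The plan for the construction is an induction on $n$. Regard $\op{Aut}(F_{n-1}) \hookrightarrow \An$ as the automorphisms fixing the last basis element $x_n$, and adjoin to a poly-free subgroup $G_{n-1} \le \op{Aut}(F_{n-1})$ of length $2n-4$ two new free factors living in $\An$: the inner automorphisms $\Inn(F_n) \cong F_n$, and the right transvections $T_n = \{\, x_n \mapsto x_n w,\ x_i \mapsto x_i\ (i<n) \mid w \in \langle x_1,\dots,x_{n-1}\rangle \,\} \cong F_{n-1}$. One checks directly that $\op{Aut}(F_{n-1})$ normalizes both $\Inn(F_n)$ (by $c_u \mapsto c_{\phi(u)}$) and $T_n$ (by $R_w \mapsto R_{\phi(w)}$), that $T_n$ normalizes $\Inn(F_n)$, and that the three subgroups intersect pairwise trivially. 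This produces
\[ G_n \ = \ (\Inn(F_n) \rtimes T_n) \rtimes G_{n-1} \ \le \ \An, \]
a poly-free group of length $2 + (2n-4) = 2n-2$, built on a suitable base case of length $2$ in $\A2$. The $\On$-statement then follows at once: the restriction to $G_n$ of the projection $\An \to \On$ has kernel $G_n \cap \Inn(F_n) = \Inn(F_n)$, so its image is $G_n/\Inn(F_n)$, a poly-free group of length $2n-3$ obtained by deleting the bottom free factor.

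The Euler characteristic is the product $\chi(G_n) = \prod_j \chi(\G_j)$ over the free factors, and is nonzero precisely when no factor is abelian, i.e.\ when every free factor has rank at least $2$. The factors $\Inn(F_k) \cong F_k$ and $T_k \cong F_{k-1}$ produced at level $k$ are non-abelian exactly for $k \ge 3$; this is the source of the hypothesis $n \ge 3$, since for small $k$ the transvection factor degenerates to $\Z$ and must be avoided. Guaranteeing that all factors of $G_n$ (and of the chosen base case) are non-abelian, so that $\chi(G_n) \neq 0$, is a first constraint on the precise choice of the subgroups above.

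The main obstacle, however, is verifying that every extension in Equation (\ref{eq:excess}) associated to the filtration of $G_n$ has excessive homology. For the stage whose fiber is a free factor $\G_{j+1} \cong F_r$ and whose quotient is $Q_j = G_n/G_{j+1}$, excessive homology is equivalent, via the five-term exact sequence, to the coinvariants $\bigl(H_1(\G_{j+1})\bigr)_{Q_j} = (\Z^r)_{Q_j}$ having positive rank. A unipotent (a fortiori trivial) monodromy $Q_j \to \op{GL}_r(\Z)$ guarantees this, whereas a monodromy with large image in $\op{GL}_r(\Z)$ typically kills the coinvariants; for instance a nonabelian free subgroup of $\op{GL}_2(\Z)$ has no nonzero coinvariants on $\Z^2$. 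The transvection factors are helpful here, as $R_w$ acts on $H_1(\Inn(F_k)) = \Z^k$ by the unipotent transvection $[x_k] \mapsto [x_k] + [w]$, leaving the rank-one coinvariant generated by $[x_k]$, on which the remaining automorphism part acts trivially. The real difficulty is to arrange the construction so that at every stage of the filtration the homological monodromy is unipotent, while simultaneously keeping the poly-free length equal to $2n-2$ and all factors non-abelian. Reconciling these competing demands---in particular controlling the action of the $\op{Aut}$-parts on the homology of the transvection factors, and choosing a base case that has both nonvanishing Euler characteristic and unipotent monodromy---is the technical heart of the argument, carried out in \cite{V23}. Granting excessive homology at each stage, Corollary \ref{cor:only} delivers the stated failure of $m$-coherence for $\An$ and $\On$.
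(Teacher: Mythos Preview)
Your overall framework---deduce failure of $m$-coherence from Corollary~\ref{cor:only} applied to an embedded poly-free ``poison'' subgroup of the right length---is exactly the paper's strategy. The paper itself gives no further detail: it attributes the embedded poly-free groups to \cite{BKK02} and sends the reader to \cite{V23}, so at the level of outline you and the paper agree, and your sketch is in fact more explicit than the paper's.

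That said, your specific inductive scheme cannot be completed as written. The inductive step is fine: if $G_{n-1}\le\Aut(F_{n-1})$ has image in $\op{GL}_{n-1}(\Z)$ block-triangular with a $1$ in the last diagonal slot, then both of your new stages (fibers $\Inn(F_n)$ and $T_n$) have excessive homology and the triangularity propagates. The problem is the seed. For the stage $1\to T_3\to G_3/\Inn(F_3)\to G_2\to 1$ to have excessive homology, your $G_2\le\Aut(F_2)$ must act on $H_1(T_3)=\Z^2$, via $\Aut(F_2)\to\op{GL}_2(\Z)$, with positive-rank coinvariants. That forces the image of $G_2$ into a conjugate of the affine group $\bigl\{\begin{smallmatrix}\pm1&\ast\\0&1\end{smallmatrix}\bigr\}$, so up to index two $G_2$ lies inside $F_2\rtimes_\phi\Z$ with $\phi$ a lift of the transvection---one may take $\phi$ to be a Dehn twist on the once-punctured torus---hence inside the fundamental group of a $3$--manifold, which is coherent by Scott. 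But a length-$2$ poly-free group with non-abelian factors and excessive homology is \emph{incoherent} by the very Corollary~\ref{cor:only} you are invoking. So no base case $G_2\le\Aut(F_2)$ with ``nonvanishing Euler characteristic and unipotent monodromy'' exists; this is a genuine obstruction, not a deferred technicality, and the poly-free subgroups of \cite{BKK02} used in \cite{V23} must be organized differently from the template $G_n=(\Inn(F_n)\rtimes T_n)\rtimes G_{n-1}$.
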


Note that direct products of free groups of ``high enough" length do not embed by \cite{HW20}. Details of the proof, which hinges on the embedding of suitable (non product) poly-free groups as in the statement, due to \cite{BKK02}, can be found in \cite{V23}. 

We finish this discussion on coherence with an observation: it has been conjectured (see \cite{Wi19}) that the assumption on excessive homology, for free-by-free groups, is not necessary, even when iterated over finite index subgroups. Stated otherwise, for these groups, there should be witnesses to incoherence that are not necessarily (not even virtually) algebraic fibers. It is probably not too far fetched to conjecture the following  generalization of Corollary \ref{cor:only}, whose proof however could prove rather challenging:
\begin{conjecture} Let $G$ be a poly-free or a poly-surface of length $n$ with nonvanishing Euler characteristic: then for all $0 \leq m \leq n-1$, $G$ is not $m$-coherent. 
\end{conjecture}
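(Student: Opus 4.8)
The plan is to follow the architecture of Corollary~\ref{cor:only} but to isolate, and then attack, the single place where the excessive homology hypothesis was used. First I would reduce to a one-step statement. For fixed $m$ with $0 \le m \le n-1$, consider the term $G_{m+1}$ of the filtration: it is poly-free (respectively poly-surface) of length $m+1$, and since $\chi(G) = \prod_{i=1}^{n} \chi(\G_i) \neq 0$ forces every factor Euler characteristic to be nonzero, we have $\chi(G_{m+1}) = \prod_{i=1}^{m+1} \chi(\G_i) \neq 0$ as well. Thus it suffices to prove the following claim for each $k = m+1$: a poly-free or poly-surface group $H$ of length $k$ with $\chi(H) \neq 0$ contains a subgroup of type $F_{k-1}$ but not of type $F_k$; any such subgroup, viewed inside $G$, is then a poison group certifying that $G$ is not $m$-coherent. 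It is essential to work inside $G_{m+1}$ rather than in $G$ directly: by Proposition~\ref{prop:obs} one has $b^{(2)}_{m+1}(G) = 0$ whenever $m+1 \le n-1$, so no obstruction to type $F_{m+1}$ is visible at this degree in $G$, whereas in $G_{m+1}$ the degree $m+1$ is the top degree and $b^{(2)}_{m+1}(G_{m+1}) = (-1)^{m+1}\chi(G_{m+1}) \neq 0$.

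The next observation is that the obstruction half of the argument survives unchanged without excessive homology. By the preceding computation and Theorem~\ref{thm:lu}, no algebraic fiber of $H$ can be of type $FP_k(\Q)$, hence none is of type $F_k$. Consequently, if one can merely exhibit an algebraic fibration $\psi \colon H \to \Z$ whose kernel is of type $F_{k-1}$ --- equivalently, show that $\S^{k-1}(H) \neq \emptyset$ --- then that kernel is automatically the desired poison group. In other words, excessive homology entered Corollary~\ref{cor:only} only through Theorem~\ref{thm:poly}, as a tool to \emph{produce} a sufficiently connected fiber, and never in the obstruction. The entire difficulty is therefore concentrated in constructing a fiber of type $F_{k-1}$ when excessive homology fails.

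For the construction I would try two routes. The first is to pass to a finite-index subgroup $H' \unlhd_f H$: it remains poly-free of length $k$ with $\chi(H') = [H:H']\,\chi(H) \neq 0$, and if its induced filtration can be arranged to have excessive homology one simply invokes Corollary~\ref{cor:only}. The obstacle here is decisive and already flagged in the text: by the examples of \cite{K-V-W} there are free-by-free (hence poly-free) groups that are not even virtually algebraically fibered, so excessive homology --- indeed any fibration --- can be unavailable throughout the entire finite-index tower. In such cases the poison group cannot be taken to be an algebraic fiber, exactly as Wilton's heuristic \cite{Wi19} predicts, and Theorem~\ref{thm:lu} no longer supplies the failure at $F_k$. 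The second route is then to build genuinely non-fibered witnesses by Morse-theoretic or fiber-product constructions intrinsic to $H$, producing subgroups of type $F_{k-1}$ attached to maps of $H$ onto higher-rank free abelian or free quotients rather than to a single discrete character.

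The main obstacle is precisely the pairing of these two halves once the poison group is not a fiber: certifying that a type-$F_{k-1}$ subgroup fails to be $F_k$ currently relies on Theorem~\ref{thm:lu}, which is engineered for kernels of epimorphisms to $\Z$, and no $L^2$-Betti obstruction applies to arbitrary infinite-index subgroups (for instance $F_2 \le F_2 \times F_2$ is of type $F_\infty$ while $b^{(2)}_2(F_2 \times F_2) = 1 \neq 0$). Thus the construction of the $F_{k-1}$-but-not-$F_k$ subgroup and the certificate for its failure at $F_k$ must be engineered simultaneously. I expect the viable path to require either a new virtual-fibering theorem whose only hypothesis is $\chi(H) \neq 0$, or an extension of the $L^2$-Betti obstruction of Theorem~\ref{thm:lu} to a class of non-normal (non-fiber) subgroups wide enough to contain the Morse-theoretic witnesses above --- and it is the absence of either ingredient at present that makes the statement a conjecture.
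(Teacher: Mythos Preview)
The statement you are analyzing is presented in the paper as a \emph{conjecture}, not a theorem; the paper offers no proof and explicitly says that a proof ``could prove rather challenging.'' There is therefore no proof in the paper to compare against --- and your own write-up is not a proof either, but an (accurate) diagnosis of why the statement is open, as you yourself acknowledge in the final sentence.

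On substance, your analysis is sound and in fact sharper than the paper's brief discussion. The reduction to $H = G_{m+1}$ is correct and well-motivated (the point that $b^{(2)}_{m+1}(G)=0$ for $m+1\le n-1$, so the obstruction must be read in $G_{m+1}$ rather than in $G$, is exactly right). You correctly observe that the obstruction half via Theorem~\ref{thm:lu} and Proposition~\ref{prop:obs} survives without any excessive-homology hypothesis, so the entire difficulty is in \emph{producing} a subgroup of type $F_{k-1}$. You also correctly identify the decisive obstacle to the virtual route: the examples of \cite{K-V-W} show that excessive homology --- indeed any algebraic fibration --- can fail throughout the finite-index tower of a free-by-free group, forcing any witness to incoherence to be something other than a (virtual) fiber. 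This is precisely what the paper alludes to in the paragraph preceding the conjecture. Your final point, that once the witness is not a fiber Theorem~\ref{thm:lu} no longer certifies the failure at $F_k$ (illustrated by $F_2 \le F_2\times F_2$), pinpoints exactly why the two halves of the argument do not currently mesh, and why the statement remains conjectural.
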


  \section{Profinite groups} \label{sec:prof}

\subsection{Pro-$\mathcal{C}$ groups} \label{sec:pro}
Our main reference for profinite groups is the Ribes-Zalesskii book \cite{R-Z}.
 In what follows we will generalize some of the results previously discussed to pro-$\mathcal{C}$ group, where $\mathcal{C}$ is a class of finite groups. We briefly recall the definition.
\begin{definition} A pro-$\mathcal{C}$ group $G$ is the inverse limit of its finite quotients that are in the class $\mathcal{C}$ i.e.
$$G = {\varprojlim} ~ G/ U,$$ where $G/U \in \mathcal{C}$. If $\mathcal{C}$ is the class of all finite groups we say the group is profinite. If $\mathcal{C}$ is the class of all finite $p$-groups, were $p$ a natural prime number, we say the group is pro-$p$. 
\end{definition}
A pro-$\mathcal{C}$ group is a topological group, where the above subgroups $U$ form a nested set of open neighbourhoods of $1_G$. Alternatively $G$ embeds as a closed subgroup in the direct product $\prod G/ U$, where each $G/ U$ is a topological group with the discrete topology and the  topology of $\prod G/ U$ is the product topology.  Every pro-${\mathcal{C}}$ group  is compact and totally disconnected.

By definition a subset $X$ of a pro-$\mathcal{C}$ group $G$ is a (topological) generating set  if there is no proper closed subgroup of $G$ that contains $X$. For an abstract group we have that every element of the group is a finite word on the set of generators and their inverses but this breaks completely in the category of  profinite groups. But still as in the case of abstract groups the nice pro-$\mathcal{C}$ groups are the (topologically) finitely generated ones.

We note that if $U$ is an open subgroup of a pro-$\mathcal{C}$ group $G$ then we have a decomposition as a disjoint union $G = \cup_{t \in T} U t$, where $T$ is transversal. Then since each $ U t$ is an open subset of $G$ and $G$ is compact, we have that $T$ is finite i.e. $U$ has finite index in $G$.
The converse holds for (topologically) finitely generated  profinite groups i.e. every subgroup of finite index in a  (topologically) finitely generated  profinite group is open.
This is a deep result by Nikolov and Segal,  see \cite{N-S1}, \cite{N-S2}, and it means  that the topology of every (topologically) finitely generated  profinite group is uniquely determined by its underlying algebraic structure.  

From now on we restrict to pro-$p$ groups $G$. They form a special class of profinite groups and are much better behaved compared to the general class of profinite groups. A pro-$p$ group is procyclic if it is (topologically) generated by one element. The only infinite procyclic pro-$p$ group is the group of the $p$-adic numbers $\mathbb{Z}_p$ that is the inverse limit ${\varprojlim}  ~ \mathbb{Z} / p^i \mathbb{Z}$. 

We discuss briefly the Frattini subgroup of a pro-$p$ group, for more details the reader can check in \cite[section 2.8]{R-Z}.
By definition the Frattini subgroup $\Phi(G)$ of a pro-$p$ $G$ is the intersection of all maximal closed subgroups of $G$. The Frattini subgroup  $\Phi(G)$ is the pro-$p$ subgroup of $G$ (topologically) generated by the set $$\{ g^p \ | \ g \in G \} \cup \{ [g_1, g_2] = g_1^{-1} g_2^{-1} g_1 g_2 \ | \ g_1, g_2 \in G \}.$$ It turns out that a subset $X$ of $G$  is a (topological) generating set of $G$ if and only if the image of $X$ in $G / \Phi(G)$ is a (topological) generating set. In particular $G$ is (topologically) finitely generated if and only if $G/  \Phi(G)$ is finite, and in this case the minimal number of generators of $G$ is the dimension of  $G/  \Phi(G)$ as a vector space over the field with $p$ elements.

\subsection{Finitely presented pro-$p$ groups}

For an abstract group $H$ the pro-$p$ completion $\widehat{H}_p$ of $H$ is the inverse limit of all $p$-finite quotients of $H$. There is a natural map $$H \lr \widehat{H}_p,$$ that is injective precisely when $H$ is residually $p$-finite.

Let $F_0$ be a free abstract group  with free basis $X$. We assume that $X$ is finite. The pro-$p$ completion $F(X)$ of $F_0$ is the free pro-$p$ group with a basis $X$. It satisfies the universal property that defines abstract free groups but now in the category of pro-$p$ groups i.e. if $G$ is a pro-$p$ group and  $\alpha \colon X \to G$ is a set theoretic map, then there is a unique homomorphism of pro-$p$ groups $$f_{\alpha} : F(X) \lr G$$ that extends $\alpha$. Recall that a homomorphism of pro-$p$ groups by definition is a continuous group homomorphism.

For example $\mathbb{Z}_p$ is a free pro-$p$ group with basis with one element, as it is the pro-$p$ completion of $\mathbb{Z}$.
More generally there is a  definition of free pro-$\mathcal{C}$ group with an infinite basis $X$, where $X$ is a profinite space i.e. an inverse limit of finite sets, see \cite[Chapter ~3]{R-Z}.

Let $G$ be a (topologically) finitely generated pro-$p$ group. Then $G$ is a pro-$p$ quotient of the free pro-$p$ group $F(X)$, with some finite $X$ i.e. there is an epimorphism of pro-$p$ groups $$\pi \colon F(X) \to G.$$ We say that $G$ is finitely presented (as a pro-$p$ group or in the category of pro-$p$ groups) if there is a finite subset $R \subseteq Ker (\pi)$ such that $ Ker (\pi)$ is the smallest closed normal subgroup of $F(X)$ that contains $R$. We write $$G = \langle X \ | \ R \rangle_p.$$

If $H$ is an abstract group that has finite presentation in the category of abstract groups $\langle X \ | \ R \rangle$, then $$\widehat{H}_p = \langle X \ | \ R \rangle_p,$$ in particular the pro-$p$ completion of a finitely presented abstract group is finitely presented (in the pro-$p$ sense). But if the pro-$p$ completion of a finitely generated abstract group $H_0$ is finitely presented (as a pro-$p$ group), this does not imply that $H_0$ is finitely presented as an abstract group, see \cite[Prop. I]{King2} for $p > 2$, $H_0$ finitely generated metabelian.

\subsection{Homology, cohomology and finiteness properties of pro-$p$ groups}
\label{sec:hompro}

Let $G$ be a pro-$p$ group.
By definition the completed group algebra with coefficients in $\mathbb{Z}_p$ is $$\mathbb{Z}_p[[G]] = {\varprojlim} ~\frac{\mathbb{Z}}{p^i \mathbb{Z}} [G/ U],$$ where the inverse limit is over all $i \geq 1$ and $U$ open subgroups of $G$. Similarly we have $$\mathbb{F}_p[[G]] =\mathbb{Z}_p[[G]]/ p \mathbb{Z}_p[[G]] =  {\varprojlim} ~ \mathbb{F}_p[[G/U]]$$
where $\mathbb{F}_p$ is the field with $p$ elements and the inverse limit is again over all open subgroups $U$ of $G$. The completed group algebra plays in the category of pro-$p$ groups the same role as the ordinary group algebra in the category of abstract groups.

The completed group algebras $\mathbb{Z}_p[[G]]$ and $\mathbb{F}_p[[G]]$ are pro-$p$ rings, so they are topological rings. But they have one extremely nice property, they are {\it local} rings i.e. they have a unique maximal ideal, that is the kernel of the obvious augmentation map to $\mathbb{F}_p$ that maps $G$ to $1$.
Because of this there is a pro-$p$  version of the Nakayama lemma  : if $V$ is a pro-$p$ $R[[G]]$-module, where $R = \mathbb{Z}_p$ or $\mathbb{F}_p$, then $V$ is finitely generated if and only if $V/ V \Omega$ is finite, where $\Omega$ is the unique maximal ideal of $R[[G]]$. Note that such a property does not hold in general for modules over abstract groups.

In the category of pro-$p$ modules over pro-$p$ rings we can develop homology and cohomology theory as in the abstract case, see \cite{R-Z}. There are notions of free modules, injective modules, free resolutions, projective resolutions. This allows us to define homology groups $H_i(G, V)$ and cohomology groups $H^i(G, W)$, where $V$ is a pro-$p$  $\mathbb{Z}_p[[G]]$-module and $W$ is a discrete $\mathbb{Z}_p[[G]]$-module.

By \cite[Thm. ~7.7.4]{R-Z} for a pro-$p$ group $G$ the following conditions are equivalent:

a) the cohomological $p$-dimension $cd_p(G) \leq 1$;

b) $H^2 (G, \mathbb{F}_p) = 0$;

c) $G$ is a free pro-$p$ group;

d) $G$ is a projective group.

As a corollary it follows that every closed subgroup of a free pro-$p$ group $G$ is free pro-$p$.

Note that for every pro-$p$ group $G$ we have $$H_1(G, \mathbb{F}_p) \simeq G/ \Phi(G).$$ Thus $G$ is (topologically) finitely generated if and only if $dim_{\mathbb{F}_p} H_1(G, \mathbb{F}_p) < \infty$. This is equivalent to $\dim_{\mathbb{F}_p} H^1(G, \mathbb{F}_p) < \infty$.

If $G$ is (topologically) finitely generated then it is finitely presented (in pro-$p$ sense) if and only if $dim_{\mathbb{F}_p} H_2(G, \mathbb{F}_p) < \infty$. This is equivalent to  $dim_{\mathbb{F}_p} H^2(G, \mathbb{F}_p) < \infty$.

It is interesting to note that geometric methods, that so often work nicely for abstract groups, normally do not apply for pro-$p$ groups. But homological and cohomological methods work nicely.

Next, we proceed with the definition of finiteness properties, that parallels that of Definition \ref{def:fin}.

\begin{definition} \label{def:finp }
Let $G$ be a pro-$p$ group and let $n$ be a nonnegative integer. We say that $G$ has type $FP_n$ if $\mathbb{Z}_p$, as a trivial $\mathbb{Z}_p[[ G]]$-module, admits a projective resolution by projective pro-$p$ $\mathbb{Z}_p[[ G]]$-modules with all morphism being continuous, 
$$ \ldots \lr \widetilde{P}_i \lr \widetilde{P}_{i-1} \ldots \lr \widetilde{P}_0 \lr \mathbb{Z}_p \lr 0,$$
with the first $n$ terms finitely generated as pro-$p$  $\mathbb{Z}_p[[ G]]$-module. 
\end{definition}

By the fact that $\mathbb{Z}_p[[ G]]$ is a local ring it follows that the following conditions are equivalent:
\begin{enumerate}[label=\alph*)]
\item  $G$ is of type $FP_n$
\item  all pro-$p$ homology groups $H_i(G, \mathbb{F}_p)$ are finite for $i \leq n$;
\item all pro-$p$ cohomology groups $H^i(G, \mathbb{F}_p)$ are finite for $i \leq n$.
\end{enumerate}

Indeed the equivalence of the first two conditions is the King criterion Theorem \ref{jeremy1} and the equivalence of the last two conditions follows by the Pontryagin duality, see \cite[Prop. 6.3.6]{R-Z}.
Thus $G$ is finitely presented (as a pro-$p$ group) if and only if it is of type $FP_2$. Thus there is no pro-$p$ version of the Bestvina-Brady example mentioned before. So this is another example that pro-$p$ groups behave differently from abstract groups.

\subsection{Golod--Shafarevich inequality for pro-$p$ groups}
\label{sec:gs}

For a pro-$p$ group define $d(G)$ the minimal number of (topological) generators. In \cite{Wilson} Wilson proved the following  pro-$p$ version of the Golod-Shafarevich inequality.

\begin{theorem} \cite{Wilson} \label{wilson1} 
Let $G$ be a pro-$p$ group with a finite pro-$p$ presentation with $n$ generators and $r$ relations and $d = d(G) > 1$. Then either
$$r \geq n + \frac{1}{4} d^2 - d$$ or for each finitely generated discrete dense subgroup $H$ of $G$ there is a closed normal subgroup $K$ of $G$ such that  $HK/ K$ is an infinite,  torsion group.
\end{theorem}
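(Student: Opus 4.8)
The plan is to recast the stated dichotomy as the alternative ``the Golod--Shafarevich inequality holds, or it fails'', and to extract the torsion conclusion from its failure by means of Golod's construction of infinite torsion groups. First I would pass to the completed group algebra $A = \mathbb{F}_p[[G]]$, whose augmentation ideal $I$ (the unique maximal ideal) satisfies $\dim_{\mathbb{F}_p} I/I^2 = \dim_{\mathbb{F}_p} H_1(G,\mathbb{F}_p) = d = d(G)$. Starting from the given presentation on $n$ generators and $r$ relations, a Tietze-type reduction lets me eliminate the $n-d$ generators whose classes are redundant in $H_1(G,\mathbb{F}_p)$, together with the $n-d$ relations carrying their linear parts; this yields a presentation of $G$ on $d$ generators and $\tilde r = r - n + d$ relations, all of order $\geq 2$ with respect to the $I$-adic filtration. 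With this normalization the inequality $r \geq n + \tfrac{1}{4} d^2 - d$ is exactly $\tilde r \geq \tfrac{1}{4} d^2$, so I may assume its negation $\tilde r < \tfrac{1}{4} d^2$, i.e. $d^2 - 4\tilde r > 0$.

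Next I would invoke the Golod--Shafarevich estimate in its recursive form: for a quotient $B$ of the free pro-$p$ algebra on $d$ generators by relations whose $I$-adic orders are recorded in $R(t) = \sum_i r_i t^i$, the numbers $b_i = \dim_{\mathbb{F}_p} I_B^i/I_B^{i+1}$ obey $b_i \geq d\, b_{i-1} - \sum_{j} r_j b_{i-j}$, equivalently $H_B(t)\,\bigl(1 - dt + R(t)\bigr) \geq 1$ coefficientwise. Hence if $1 - d t_0 + R(t_0) \leq 0$ for some $t_0 \in (0,1)$, a finite--dimensional $B$ would give the contradiction $0 \geq H_B(t_0)\bigl(1 - d t_0 + R(t_0)\bigr) \geq 1$, so $B$ must be infinite--dimensional. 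For our reduced presentation $R(t) \leq \tilde r t^2$ on $(0,1)$, and since $\tilde r < \tfrac{1}{4} d^2$ the quadratic $1 - dt + \tilde r t^2$ is negative at some $t_0 \in (0,1)$ (its smaller root equals $\tfrac{2}{d + \sqrt{d^2 - 4\tilde r}} \leq \tfrac{2}{d} \leq 1$ as $d > 1$). This already recovers that $G$ is infinite, and produces a positive ``slack'' $\epsilon_0 := -(1 - d t_0 + \tilde r t_0^2) > 0$ to be spent on the torsion construction.

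Now fix a finitely generated dense discrete subgroup $H \leq G$ and enumerate its nontrivial elements $w_1, w_2, \dots$ (possible, as $H$ is countable). Each $w_j$ of infinite order satisfies $w_j - 1 \neq 0$ with $I$-adic order $e_j' \geq 1$. Using the identity $(w_j - 1)^{p^l} = w_j^{p^l} - 1$ valid in characteristic $p$, I would enlarge $A$ by the closed two-sided ideal $\mathfrak{J}$ generated by all $(w_j-1)^{p^{l_j}}$, choosing the exponents $l_j$ so large that $\sum_j t_0^{p^{l_j}} < \epsilon_0$; since $\mathrm{ord}\bigl((w_j-1)^{p^{l_j}}\bigr) \geq p^{l_j}$, the augmented relation series still satisfies $1 - d t_0 + R(t_0) < 0$, so the estimate forces $C := A/\mathfrak{J}$ to be infinite--dimensional. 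Setting $K := \{ g \in G : g - 1 \in \mathfrak{J}\}$, a closed normal subgroup because $\mathfrak{J}$ is a closed two-sided ideal, I obtain $w_j^{p^{l_j}} - 1 = (w_j-1)^{p^{l_j}} \in \mathfrak{J}$, so every $w_j$ has $p$-power order in $G/K$ and $HK/K$ is torsion; on the other hand $\mathbb{F}_p[[G/K]] = A/\mathfrak{J}_K$ with $\mathfrak{J}_K \subseteq \mathfrak{J}$, so it surjects onto the infinite--dimensional $C$, whence $G/K$ is infinite. As $H$ is dense, $HK/K$ is dense in the infinite profinite group $G/K$, hence infinite, completing the second alternative.

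The main obstacle is the correct formulation and use of the Golod--Shafarevich estimate in the complete, merely filtered (not graded) setting: one must count the imposed relations by their $I$-adic order and pass to the associated graded to apply the homogeneous inequality, while simultaneously reconciling the two competing demands on the exponents $l_j$ — they must be powers of $p$ so that the characteristic-$p$ identity yields genuine torsion, yet large enough that the extra relations, being of high order, are absorbed by the slack $\epsilon_0$ and do not destroy infinite--dimensionality. The bookkeeping in the initial reduction from $n$ to $d$ generators, ensuring the surviving relations genuinely have order $\geq 2$, is the other point requiring care.
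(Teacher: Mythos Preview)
The paper is a survey and does not supply its own proof of this theorem; it is quoted verbatim from Wilson's paper \cite{Wilson} and used as background for the subsequent discussion. So there is no in-paper argument to compare against.

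That said, your outline is essentially Wilson's original proof, and the logical skeleton is sound: reduce to a minimal $d$-generator presentation so that the defining relations have $I$-adic order $\geq 2$ and the hypothesis becomes $\tilde r<d^2/4$; use the resulting strict negativity of $1-dt_0+\tilde r t_0^2$ at some $t_0\in(0,1)$ as ``slack'' in the Golod--Shafarevich inequality; then, for a given dense finitely generated $H$, impose the additional relations $(w_j-1)^{p^{l_j}}=w_j^{p^{l_j}}-1$ with $l_j$ growing fast enough that $\sum_j t_0^{p^{l_j}}$ is absorbed by the slack, so the quotient algebra stays infinite-dimensional while every element of $H$ acquires $p$-power order modulo the corresponding closed normal subgroup $K$.

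Two points of care worth tightening. First, your coefficientwise inequality ``$H_B(t)(1-dt+R(t))\geq 1$'' is not the correct form of Golod--Shafarevich; what one actually proves is the recursion $b_n\geq d\,b_{n-1}-\sum_i r_i\,b_{n-i}$, from which one deduces (via the Golod/Vinberg argument) that if $1-dt_0+R(t_0)\leq 0$ for some $t_0\in(0,1)$ then $B$ is infinite-dimensional. Your use of the conclusion is correct even if the intermediate inequality is misstated. Second, the added relators live in $\mathbb{F}_p[[G]]$, not in the free pro-$p$ algebra where the Hilbert-series estimate is applied; you must lift each $w_j$ to the free group and impose $(W_j-1)^{p^{l_j}}$ there, noting its order is still $\geq p^{l_j}$. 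With these adjustments your sketch matches Wilson's argument.
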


The following result was proved in \cite{Wilson} as a corollary of Theorem \ref{wilson1}.

\begin{theorem} \cite{Wilson} \label{wilson2} Let $G$ be a finitely presented pro-$p$ group and assume that there does not exist a finitely generated discrete  subgroup $H$ of $G$ normalising $K$ and a closed subgroup $K$ of $G$ such that $HK/ K$ is an infinite,  torsion group. Then
\begin{enumerate}[label=\alph*)]
\item  there is a constant $k$ such that for each open subgroup $U$ of $G$ we have $d(U) \leq k [G : U]^{\frac{1}{2}}$;
\item if $N$ is any closed normal subgroup of $G$ such that $G/ N \simeq \mathbb{Z}_p$ then $N$ is (topologically) finitely generated.
\end{enumerate}
\end{theorem}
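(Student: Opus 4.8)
The plan is to derive both statements from the Golod--Shafarevich inequality of Theorem \ref{wilson1}, using the fact that the hypothesis of Theorem \ref{wilson2} is precisely the negation of the pathological alternative appearing there, and that this negation is inherited by every open subgroup. Concretely, first I would check the transfer of hypotheses: if $U \leq G$ is open and there were a finitely generated discrete subgroup $H \leq U$ together with a closed $K \unlhd U$ such that $HK/K$ is infinite torsion, then $H$ is a finitely generated discrete subgroup of $G$, $K$ is a closed subgroup of $G$ normalised by $H$, and $HK/K$ is still infinite torsion --- a configuration excluded by hypothesis. Hence the excluded alternative of Theorem \ref{wilson1} cannot occur for any open $U \leq G$, so the Golod--Shafarevich inequality must hold for each such $U$.

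For part (a) I would fix a finite pro-$p$ presentation $G = \langle X \mid R \rangle_p$ with $|X| = n$, $|R| = r$, and take $U \leq G$ open of index $m = [G:U]$. By the pro-$p$ Reidemeister--Schreier/Nielsen--Schreier formula the preimage of $U$ in the free pro-$p$ group on $X$ is free pro-$p$ of rank $m(n-1)+1$, so $U$ admits a finite presentation with $n_U = m(n-1)+1$ generators and $r_U = mr$ relations. The case $d(U) \leq 1$ makes the bound trivial, so I assume $d(U) > 1$; since an open subgroup of a topologically finitely generated pro-$p$ group is again topologically finitely generated, $U$ contains a finitely generated discrete dense subgroup, and by the previous paragraph Theorem \ref{wilson1} forces $r_U \geq n_U + \tfrac14 d(U)^2 - d(U)$. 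Substituting the counts and using $d(U) \leq n_U = m(n-1)+1$ gives $\tfrac14 d(U)^2 \leq mr$, i.e. $d(U) \leq 2\sqrt{r}\,[G:U]^{1/2}$, so one may take $k = 2\sqrt{r}$.

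For part (b) I would set $Q = G/N \cong \mathbb{Z}_p$ and let $U_i \leq G$ be the preimage of $p^i \mathbb{Z}_p \leq Q$, so that $[G:U_i] = p^i$, $U_i/N \cong \mathbb{Z}_p$, and $\bigcap_i U_i = N$. Writing $M = H_1(N,\mathbb{F}_p) = N/\Phi(N)$, a profinite module over $\Lambda := \mathbb{F}_p[[Q]] \cong \mathbb{F}_p[[t]]$ with $t = \gamma - 1$ for a topological generator $\gamma$ of $Q$, I recall that $N$ is topologically finitely generated if and only if $\dim_{\mathbb{F}_p} M < \infty$. Because $\mathrm{cd}_p(\mathbb{Z}_p) = 1$, the Lyndon--Hochschild--Serre spectral sequence of $1 \to N \to U_i \to \mathbb{Z}_p \to 1$ degenerates to a short exact sequence $0 \to \mathbb{F}_p \to H_1(U_i,\mathbb{F}_p) \to M/(\gamma^{p^i}-1)M \to 0$; since $(\gamma^{p^i}-1) = (1+t)^{p^i} - 1 = t^{p^i}$ in characteristic $p$, this yields $d(U_i) = 1 + \dim_{\mathbb{F}_p} M/t^{p^i}M$. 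The same spectral sequence for $G$ shows $M/tM$ is finite (as $H_1(G,\mathbb{F}_p)$ is), so by the pro-$p$ Nakayama lemma $M$ is a finitely generated $\Lambda$-module, and since $\Lambda \cong \mathbb{F}_p[[t]]$ is a complete discrete valuation ring the structure theorem gives $M \cong \Lambda^a \oplus T$ with $T$ finite. Then $\dim_{\mathbb{F}_p} M/t^{p^i}M = a\,p^i + \dim_{\mathbb{F}_p} T$ for all large $i$, and comparing with the bound $d(U_i) \leq k\,p^{i/2}$ from part (a) forces $a = 0$; hence $M$ is finite and $N$ is topologically finitely generated.

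The main obstacle I anticipate is the transfer step of the first paragraph: one must verify that the normality and density bookkeeping genuinely allows the excluded configuration to be imported from $U$ back to $G$, so that Theorem \ref{wilson1} delivers the Golod--Shafarevich inequality \emph{uniformly} over all open $U$. The second delicate point is pinning down the exact generator and relation counts $n_U$ and $r_U$ from the pro-$p$ Reidemeister--Schreier procedure, since the constant $k$ in part (a) --- and hence the growth estimate that kills the free rank $a$ in part (b) --- depends on them.
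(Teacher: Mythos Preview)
The paper does not supply its own proof of this theorem: it merely records that Wilson proved it in \cite{Wilson} as a corollary of Theorem~\ref{wilson1}. Your proposal carries out precisely that derivation, and the argument is correct.

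A couple of minor remarks. First, in your five-term exact sequence the order of the outer terms is reversed: with $H_2(\mathbb{Z}_p,\mathbb{F}_p)=0$ one obtains
\[
0 \lr M/(\gamma^{p^i}-1)M \lr H_1(U_i,\mathbb{F}_p) \lr \mathbb{F}_p \lr 0,
\]
not the sequence you wrote; of course this does not affect the dimension count. Second, for the Nakayama step you should note explicitly that $M=N/\Phi(N)$ is a \emph{profinite} $\mathbb{F}_p[[t]]$-module (it is, being a Hausdorff quotient of the pro-$p$ group $N$), since the pro-$p$ Nakayama lemma requires this; once $M$ is known to be finitely generated, the identification of finitely generated profinite and abstract $\mathbb{F}_p[[t]]$-modules (using compactness and Noetherianity of $\mathbb{F}_p[[t]]$) lets you invoke the structure theorem as you do. With these cosmetic fixes your argument is complete and matches the approach indicated by the paper.
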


The assumptions of  Theorem \ref{wilson2}  apply for soluble groups since every finitely generated, torsion, soluble group is finite.
Part b) from Theorem \ref{wilson2} is very striking. Such a result definitely does not hold for abstract groups, as there are many examples of splittings of a discrete group as a descending or ascending HNN extension, where the associated subgroups do not coincide. There is a pro-$p$ version of the HNN construction but in the category of pro-$p$ groups we do not have ascending or descending pro-$p$ HNN extensions with no coinciding associated subgroups, since this cannot be done for finite $p$-groups.

One corollary of Theorem \ref{wilson2} is that for a finitely presented metabelian pro-$p$ group $G$ we have that if $N$ is any closed normal subgroup of $G$ such that $G/ N \simeq \mathbb{Z}_p$ then $N$ is (topologically) finitely generated. But this property does not characterise finite presentability. Finite presentability was classified later by King in \cite{King2} and will be discussed in the next section.

\section{Metabelian groups} \label{sec:ab}

In this section we compare the theory of finitely presented metabelian abstract and pro-$p$ groups.

\subsection{Finitely presented abstract, metabelian groups}
Suppose $H$ is a finitely generated abstract metabelian group. Then there is    a short exact sequence of abstract groups \[ 1 \lr A_0 \lr H \lr Q_0 \lr 1 \] with $A_0$ and $Q_0$ abelian. Here we view $A_0$ as a $\mathbb{Z} Q_0$-module via the $Q_0$-action induced by conjugation and the additive structure of $A_0$ is the underlying group operation restricted to $A_0$.

In \cite{B-S} Bieri and Strebel define an invariant $\Sigma_{A_0}(Q_0)$ that is a subset of the character sphere $$S(Q_0) = Hom(Q_0, \mathbb{R}) \setminus \{ 0 \} /  \sim,$$ where $\chi_1 \sim \chi_2$ if there is a positive real number $r$ such that $\chi_1 = r \chi_2$.  $A_0$ is said to be 2-tame if $$S(Q_0) = \Sigma_{A_0} (Q_0) \cup - \Sigma_{A_0}(Q_0).$$

\begin{theorem} \cite{B-S}
The following conditions are equivalent:
\begin{enumerate}[label=\alph*)]
\item $H$ is finitely presented;
\item $H$ is of type $FP_2$;
\item $A_0$ is 2-tame as $\mathbb{Z} Q_0$-module.
\end{enumerate}
\end{theorem}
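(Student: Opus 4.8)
The plan is to prove the three conditions equivalent as a cycle $(a)\Rightarrow(b)\Rightarrow(c)\Rightarrow(a)$. First I would record the standing normalizations. Since $H$ is finitely generated and metabelian, $Q_0$ is a finitely generated abelian group and, by P.~Hall's theorem that finitely generated metabelian groups are $\mathbb{Z}Q_0$-Noetherian, $A_0$ is a finitely generated $\mathbb{Z}Q_0$-module; this already gives that $H$ is of type $FP_1$. Writing $(Q_0)_\chi=\{q\in Q_0:\chi(q)\geq 0\}$ for the monoid attached to a character $\chi$, recall that $[\chi]\in\Sigma_{A_0}(Q_0)$ means precisely that $A_0$ is finitely generated over the monoid ring $\mathbb{Z}[(Q_0)_\chi]$, so that $2$-tameness says that for every $\chi$ at least one of $\mathbb{Z}[(Q_0)_\chi]$ or $\mathbb{Z}[(Q_0)_{-\chi}]$ finitely generates $A_0$. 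It is harmless to pass to a finite-index subgroup of $H$, so I would reduce to the case $Q_0\cong\mathbb{Z}^n$ free abelian, using that finite presentation, type $FP_2$ and $2$-tameness of $A_0$ are all insensitive to this replacement.

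The implication $(a)\Rightarrow(b)$ is immediate, as a finite presentation yields a partial free resolution of $\mathbb{Z}$ that is finitely generated in degrees $\leq 2$. For $(b)\Rightarrow(c)$ I would argue by contraposition, assuming $A_0$ fails to be $2$-tame and producing an obstruction to $FP_2$. The key observation is that, relative to the free-abelian-by-abelian approximation, the relations of $H$ split into relations coming from $Q_0$, module relations witnessing finite generation of $A_0$, and the commutator relations $[a,a']=1$ among elements of $A_0$; it is the last family, governed by $\Lambda^2 A_0$, that is special to the metabelian setting (it encodes $[A_0,A_0]=1$). Concretely I would run the Lyndon--Hochschild--Serre spectral sequence $E^2_{pq}=H_p(Q_0,H_q(A_0))$, whose low-degree entries $H_2(Q_0)$, $H_1(Q_0,A_0)$ and $H_0(Q_0,\Lambda^2 A_0)$ assemble $H_2(H)$, and exploit that finite generation of the second syzygy (relation) module of $H$ forces a finiteness of the $\Lambda^2 A_0$-term after localization at the monoid rings $\mathbb{Z}[(Q_0)_{\pm\chi}]$. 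Choosing $\chi$ with both $[\chi],[-\chi]\in\Sigma_{A_0}(Q_0)^c$ — so that $A_0$ is infinitely generated over \emph{both} monoid rings — one manufactures an infinite, non-stabilizing family of independent commutator relations, contradicting finite generation of the relation module and hence $FP_2$.

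For the constructive direction $(c)\Rightarrow(a)$ I would build an explicit finite presentation out of $2$-tameness. Starting from finite generating sets of $Q_0$ and of $A_0$ as a $\mathbb{Z}Q_0$-module, the relations of $H$ are generated by lifts of the defining relations of $Q_0$, by finitely many relations expressing the $Q_0$-action on the module generators, and by the commutator relations $[a,a']=1$. The only potentially infinite family is the last, and the whole point is that $2$-tameness tames it. By openness of $\Sigma_{A_0}(Q_0)$ and compactness of $S(Q_0)$, the covering $S(Q_0)=\Sigma_{A_0}(Q_0)\cup(-\Sigma_{A_0}(Q_0))$ admits a finite subcover, so finitely many characters $\chi_1,\dots,\chi_r$ control all directions; for each of these one of $\mathbb{Z}[(Q_0)_{\pm\chi_i}]$ finitely generates $A_0$, which lets me ``transport'' an arbitrary pair $a,a'$ into a fixed finite set of module generators by multiplying by suitable monoid elements and absorbing the cost into already-chosen relations. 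I would then verify that every commutator relation $[a,a']=1$ is a consequence of finitely many chosen ones, completing the presentation.

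The main obstacle is the constructive step $(c)\Rightarrow(a)$: turning the two-sided finite-generation data of $2$-tameness into an actual finite set of commutator relations that provably generate all of them. The delicate part is the valuation-theoretic bookkeeping — showing that the ``transport'' of a generic pair $a,a'$ across the monoid can always be reduced in finitely many steps, using only the finitely many chosen monoid generators on the appropriate side of each $\chi_i$, without the reduction escaping to infinity in some intermediate direction. This is exactly where the \emph{global} covering condition $S(Q_0)=\Sigma_{A_0}(Q_0)\cup(-\Sigma_{A_0}(Q_0))$, rather than merely each direction in isolation, must be invoked, and it is the reason the statement is a genuine theorem about the module $A_0$ and not a formal consequence of finite generation.
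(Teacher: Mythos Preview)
The paper does not give its own proof of this theorem; it is a survey that cites Bieri--Strebel and records only a two-sentence outline: the implication $(b)\Rightarrow(c)$ was proved by ``a very beautiful geometric argument that uses algebraic topology, in particular covering maps and van Kampen's theorem,'' while $(c)\Rightarrow(a)$ is ``an algebraic calculation, a very non-trivial one.'' Your cycle $(a)\Rightarrow(b)\Rightarrow(c)\Rightarrow(a)$ and your plan for $(c)\Rightarrow(a)$ are consistent with this description, and your remarks on openness and compactness for organizing the covering $S(Q_0)=\Sigma_{A_0}(Q_0)\cup(-\Sigma_{A_0}(Q_0))$ are on the right track for that direction.

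Where you genuinely diverge is $(b)\Rightarrow(c)$. You propose a purely homological route via the Lyndon--Hochschild--Serre spectral sequence and the $\Lambda^2 A_0$ contribution, whereas the paper emphasizes that Bieri--Strebel's argument is geometric. This is not merely a stylistic difference: your sketch has a gap. Knowing that $H$ is $FP_2$ controls the relation module as a $\mathbb{Z}H$-module, but it does not immediately say that $(\Lambda^2 A_0)_{Q_0}$, or any localization of $\Lambda^2 A_0$ at a monoid ring $\mathbb{Z}[(Q_0)_{\pm\chi}]$, is finitely generated --- the passage ``forces a finiteness of the $\Lambda^2 A_0$-term after localization at the monoid rings'' is asserted, not argued, and it is precisely the hard step. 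The analogous homological argument \emph{does} work in the pro-$p$ setting discussed later in the paper (King's criterion, where $A\widehat{\wedge}A$ being finitely generated over $\mathbb{Z}_p[[Q]]$ characterizes finite presentation), but that relies on $\mathbb{Z}_p[[Q]]$ being local and Noetherian together with a Nakayama-type reduction, none of which is available over $\mathbb{Z}Q_0$. Bieri--Strebel's geometric argument (building a $K(H,1)$, passing to a suitable cover, and using van Kampen to detect failure of simple connectivity at infinity in the $\chi$-direction) is what supplies the missing link between $FP_2$ and the directional finite-generation condition defining $\Sigma_{A_0}(Q_0)$. Your phrase ``one manufactures an infinite, non-stabilizing family of independent commutator relations'' names the desired conclusion but not a mechanism for producing it from the failure of $2$-tameness alone.
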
 

Later on the above invariant was generalised by Bieri-Neumann-Strebel to the invariant $\S^{1}(G)$ described in Section \ref{sec:fin}. Condition c) from the above theorem can be restated as $S(H) = \Sigma^1(H) \cup - \Sigma^1(H)$.

The fact that b) implies c) was proved by a very beautiful geometric argument that uses algebraic topology, in particular covering maps and  van Kampen´s theorem. The fact that c) implies a) is an algebraic calculation, a very non-trivial one. Note that one unexpected corollary of the above result is that finite presentability does not depend on the type of the extension i.e. does not depend on the element of $H^2(Q_0, A_0)$ that describes the group $H$ but only on the structure of $A_0$ as a $\mathbb{Z} Q_0$-module. This is somewhat surprising since  in general i.e. outside the class of metabelian groups, we expect that the finite presentability depends on the type of the extension.

There is a curious link between $\Sigma_{A_0}(Q_0)$ and valuation theory from commutative algebra. This was used in \cite{B-G} to show that $S(Q_0) \setminus \Sigma_{A_0}(Q_0)$ is a rationally defined spherical polyhedron. This  result itself turned out important and quite useful  in the modern area of tropical geometry.

\subsection{Finitely presented, metabelian pro-$p$ groups}

For general finitely presented groups $G$ in \cite{King}  King proved the following criterion.

\begin{theorem} \cite{King}  \label{jeremy1} Suppose that $G$ is a pro-$p$ group,
 $R$ is either $\mathbb{F}_p$ or $\mathbb{Z}_p$ and $N$  a normal pro-$p$ subgroup of $G$ such that $R[[G/N]]$ is left and right Noetherian.
Then $G$ is of type $FP_m$ as a pro-$p$ group if and only if the  homology groups $H_i(N, R)$ are finitely generated as pro-$p$ $R[[G/N]]$-modules for all $i \leq m$, where the $G/N$ action is induced by the conjugation action of $G$ on $N$.
\end{theorem}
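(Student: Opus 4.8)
The plan is to reduce everything to pro-$p$ Nakayama (as recalled in Section~\ref{sec:hompro}) together with the Lyndon--Hochschild--Serre spectral sequence for the extension $1 \to N \to G \to G/N \to 1$ in pro-$p$ homology, followed by an induction on the homological degree. Write $Q = G/N$ and $S = R[[Q]]$, and recall that $S$ and $R[[G]]$ are local pro-$p$ rings. The first step is to establish the foundational case $N = G$, $Q = 1$ on its own: a pro-$p$ group $G$ is of type $FP_m$ if and only if $H_i(G,R)$ is finitely generated over $R$ (finite when $R = \mathbb{F}_p$) for all $i \le m$. This follows from the existence of a minimal free resolution $P_\bullet \to R$ over the local ring $R[[G]]$: minimality forces the differentials to vanish after applying $- \otimes_{R[[G]]} \mathbb{F}_p$, so that $\dim_{\mathbb{F}_p} H_i(G,\mathbb{F}_p)$ equals the number of free generators of $P_i$, and pro-$p$ Nakayama converts finiteness of this number into finite generation of $P_i$. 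The case $R = \mathbb{Z}_p$ is then recovered from the $\mathbb{F}_p$ case through the Bockstein sequence attached to $0 \to \mathbb{Z}_p \xrightarrow{p} \mathbb{Z}_p \to \mathbb{F}_p \to 0$.

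The second ingredient is where the Noetherian hypothesis enters: if $M$ is a finitely generated $S$-module, then $\operatorname{Tor}^S_p(R, M) = H_p(Q, M)$ is finitely generated over $R$ (finite for $R = \mathbb{F}_p$) for every $p$. Indeed a finitely generated module over a Noetherian ring is of type $FP_\infty$, so $M$ admits a resolution by finitely generated free $S$-modules; applying $R \otimes_S -$ gives a complex of finitely generated $R$-modules whose homology is exactly $\operatorname{Tor}^S_p(R,M)$. With this in hand the reverse implication is immediate: assuming $H_q(N,R)$ is finitely generated over $S$ for $q \le m$, the spectral sequence $E^2_{pq} = H_p(Q, H_q(N,R)) \Rightarrow H_{p+q}(G,R)$ has $E^2_{pq}$ finitely generated over $R$ whenever $q \le m$, and being first-quadrant, each $H_n(G,R)$ with $n \le m$ carries a finite filtration whose subquotients $E^\infty_{pq}$ satisfy $p+q = n$, hence $q \le m$. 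Therefore $H_n(G,R)$ is finitely generated over $R$ for $n \le m$, and $G$ is of type $FP_m$ by the first step.

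The forward implication is the crux and proceeds by induction on $0 \le q \le m$, showing $H_q(N,R)$ is finitely generated over $S$; the case $q = 0$ is trivial since $H_0(N,R) = R$ is cyclic. For the inductive step, pro-$p$ Nakayama reduces the claim to finite generation over $R$ of $E^2_{0,q} = H_0(Q, H_q(N,R)) = H_q(N,R) \otimes_S R$. In the column $p = 0$ there are no outgoing differentials, so $E^\infty_{0,q}$ is the quotient of $E^2_{0,q}$ by the images of the incoming differentials $d_r : E^r_{r,\,q-r+1} \to E^r_{0,q}$ for $2 \le r \le q+1$. Each source is a subquotient of $E^2_{r,\,q-r+1} = H_r(Q, H_{q-r+1}(N,R))$ with $q-r+1 \le q-1$, so by the inductive hypothesis $H_{q-r+1}(N,R)$ is finitely generated over $S$ and the previous step makes these $E^2$-terms finitely generated over $R$; thus the finitely many incoming images are finitely generated over $R$. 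On the other hand $E^\infty_{0,q}$ embeds in $H_q(G,R)$, which is finitely generated over $R$ since $G$ is of type $FP_m$ and $q \le m$. As $E^2_{0,q}$ is a finite iterated extension of $E^\infty_{0,q}$ by these images, it is finitely generated over $R$, and Nakayama then yields that $H_q(N,R)$ is finitely generated over $S$.

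The main obstacle is not the spectral sequence chase but making the underlying pro-$p$ homological algebra rigorous: one must verify that the Lyndon--Hochschild--Serre spectral sequence exists and converges in the category of profinite $R$-modules, carrying the correct $S = R[[Q]]$-module structure on $H_q(N,R)$ induced by conjugation; that the groups $H_q(N,R)$ are genuine pro-$p$ modules, so that Nakayama's criterion applies \emph{before} finite generation is established; and that submodules and subquotients of finitely generated modules over the Noetherian pro-$p$ ring $S$ remain closed and finitely generated, so that the extension and subobject arguments in the inductive step are legitimate. Granting this standard but delicate machinery, the equivalence follows from the induction outlined above.
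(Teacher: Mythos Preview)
The paper does not give its own proof of this theorem; it is quoted directly from King's paper \cite{King} and used as a black box (in particular, the special case $N=G$ is invoked immediately after Definition~\ref{def:finp } as ``the King criterion''). Your argument is correct and is essentially the standard proof, and indeed King's original one: establish the base case $N=G$ via minimal free resolutions over the local ring $R[[G]]$ and pro-$p$ Nakayama, then feed the Noetherian hypothesis on $R[[Q]]$ into the Lyndon--Hochschild--Serre spectral sequence and induct on the homological degree, using the corner analysis at $E^r_{0,q}$ for the forward direction. Your final paragraph correctly flags where the real work lies---namely in ensuring that $H_q(N,R)$ is a genuine profinite $R[[Q]]$-module so that Nakayama applies \emph{a priori}, and that the LHS spectral sequence exists and converges in the profinite category---and these points are exactly what one must extract from \cite{R-Z} to make the sketch rigorous.
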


As stated before geometric methods do not pass to pro-$p$ groups. Still there is a complete classification of finitely presented metabelian pro-$p$ groups $G$. As a corollary of the above theorem King obtained the following classification

\begin{theorem} \label{jeremy2}
Let  $ 1 \to A \to G \to Q \to 1$ be a short exact sequence of pro-$p$ groups, where $A$ and $Q$ are abelian and $G$ is (topologically) finitely generated. Then the following conditions are equivalent:
\begin{enumerate}[label=\alph*)]
\item $G$ is finitely presented ( as a pro-$p$ group)
\item $H_2(A, \mathbb{Z}_p) \simeq A \widehat{\wedge} A$ is finitely generated as a $\mathbb{Z}_p[[Q]]$-module via the diagonal $Q$-action.
\end{enumerate}
\end{theorem}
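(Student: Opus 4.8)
The plan is to deduce the equivalence entirely from King's criterion (Theorem \ref{jeremy1}), applied to the normal pro-$p$ subgroup $N = A$, thereby reducing the problem to a computation of the low-degree homology of the abelian pro-$p$ group $A$ as a $\mathbb{Z}_p[[Q]]$-module.

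First I would record the two structural facts needed to set up the criterion. Since $G$ is topologically finitely generated, its quotient $Q$ is a finitely generated abelian pro-$p$ group, hence $Q \cong \mathbb{Z}_p^d \times T$ with $T$ finite; consequently $\mathbb{Z}_p[[Q]]$ is a commutative, local, Noetherian ring, being finitely generated as a module over the formal power series ring $\mathbb{Z}_p[[t_1,\dots,t_d]]$. This verifies the Noetherian hypothesis of Theorem \ref{jeremy1} with $G/N = Q$ and $R = \mathbb{Z}_p$. Moreover, as recorded in Section \ref{sec:hompro}, a topologically finitely generated pro-$p$ group is finitely presented if and only if it is of type $FP_2$; so condition (a) is equivalent to $G$ being of type $FP_2$, and it suffices to prove that $G$ is $FP_2$ if and only if (b) holds.

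Next I would invoke Theorem \ref{jeremy1}: with $N = A$, the group $G$ is of type $FP_m$ if and only if $H_i(A,\mathbb{Z}_p)$ is finitely generated as a $\mathbb{Z}_p[[Q]]$-module (for the conjugation action) for every $i \le m$. Taking $m = 1$ first, the standing hypothesis that $G$ is finitely generated means precisely that $G$ is $FP_1$, so the criterion forces $H_0(A,\mathbb{Z}_p) \cong \mathbb{Z}_p$ and $H_1(A,\mathbb{Z}_p) \cong A$ to be finitely generated $\mathbb{Z}_p[[Q]]$-modules automatically. Feeding this back into the criterion at $m = 2$, the only surviving requirement is that $H_2(A,\mathbb{Z}_p)$ be finitely generated over $\mathbb{Z}_p[[Q]]$. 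Thus $G$ is $FP_2$ if and only if $H_2(A,\mathbb{Z}_p)$ is a finitely generated $\mathbb{Z}_p[[Q]]$-module.

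The final, and main, step is to identify $H_2(A,\mathbb{Z}_p)$ as a $\mathbb{Z}_p[[Q]]$-module. Because $A$ is an abelian pro-$p$ group, the continuous analogue of the classical isomorphism $H_2(A,\mathbb{Z}) \cong \wedge^2 A$ should yield $H_2(A,\mathbb{Z}_p) \cong A \widehat{\wedge} A$, the completed exterior square; and since the conjugation action of $G$ on $A$ factors through $Q$ and $H_2$ is functorial, each $q \in Q$ acts on $A \widehat{\wedge} A$ by the exterior square of its action on $A$, i.e. diagonally, $q\cdot(a\wedge b) = (qa)\wedge(qb)$. This is exactly the $\mathbb{Z}_p[[Q]]$-module structure appearing in (b), so the previous step becomes precisely the assertion that (a) is equivalent to (b). I expect this homological identification to be the main obstacle: while $H_2(A,\mathbb{Z})\cong\wedge^2 A$ is standard for abstract abelian groups, carrying it out with continuous homology and completed exterior powers requires care with inverse limits and with the $p$-torsion of $A$, and the $Q$-equivariance, though a formal consequence of naturality, must be checked against these completions. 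By contrast, the Noetherianity of $\mathbb{Z}_p[[Q]]$ and the automatic finite generation in degrees $0$ and $1$ are comparatively routine.
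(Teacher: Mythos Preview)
Your proposal is correct and follows exactly the route the paper takes: the paper explicitly remarks that Theorem~\ref{jeremy2} is a direct corollary of Theorem~\ref{jeremy1} applied with $N=A$, using that $Q$ is a finitely generated abelian pro-$p$ group so that $\mathbb{Z}_p[[Q]]$ is Noetherian. Your write-up is in fact more detailed than the paper's sketch, correctly isolating the identification $H_2(A,\mathbb{Z}_p)\cong A\,\widehat{\wedge}\,A$ (with the diagonal $Q$-action) as the one nontrivial ingredient beyond the formal reduction.
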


We note that in the above theorem the fact that $G$ is (topologically) finitely generated and $Q$ is (topologically) finitely generated abelian, hence (topologically) finitely presented  imply that $A$ is (topologically) finitely generated as a $\mathbb{Z}_p[[Q]]$-module.

In the above theorem $A \widehat{\wedge} A$ is the exterior product in the category of pro-$p$ modules. It is the inverse limit of $(A/U) \wedge (A/ U)$ over all pro-$p$ $\mathbb{Z}_p[[Q]]$-submodules $U$ of $A$ with $A/ U$ $p$-finite. Note that Theorem \ref{jeremy2} is a direct corollary of Theorem \ref{jeremy1} applied for $N = A$. In this case $Q \simeq G/ A$ is a (topologically) finitely generated abelian pro-$p$ group, hence both rings $\mathbb{Z}_p[[Q]]$ and $\mathbb{F}_p[[Q]]$ are Noetherian.

Note that as observed before a pro-$p$  $\mathbb{Z}_p[[Q]]$-module $V$ is (topologically) finitely generated if and only if $V/ V \Omega$ is finite where $\Omega$ is the unique maximal ideal of  $\mathbb{Z}_p[[Q]]$. In particular  condition b) is equivalent to $ A \widehat{\wedge} A / ( A \widehat{\wedge} A) \Omega$ is finite.

It is interesting to note that in \cite{King2} King defines an invariant  that plays in the theory of pro-$p$ metabelian groups the same role as the complement $\Sigma_{A_0}^c (Q_0) = S(Q_0) \setminus \Sigma_{A_0} (Q_0)$ of the first Bieri-Strebel invariant $ \Sigma_{A_0} (Q_0)$. He calls the invariant geometric in the title of the paper  but actually this invariant is everything but geometric. In some sense King´s invariant resembles the link between  $\Sigma_{A_0}^c (Q_0)$   and valuation theory, as suggested by Bieri and Groves in \cite{B-G}, but all this was adapted in the context of pro-$p$ groups. 

One of the corollaries of the main results from \cite{B-S} is that if $H$ is a finitely presented abstract group without free non cyclic subgroups then every metabelian quotient of $H$ is finitely presented. The main part of the proof uses the topological methods mentioned before and thus is not applicable to pro-$p$ groups. The same statement makes sense for pro-$p$ groups but it is still  an open problem, i.e. it is open whether if  $G$ is a finitely presented pro-$p$ group without free non-procyclic pro-$p$ subgroups then the maximal metabelian pro-$p$ quotient of $G$ is finitely presented pro-$p$.

\subsection{The King invariant}

Let $Q$ be a finitely generated, abelian, pro-$p$ group. We denote by
 $\overline{\mathbb{F}}_p$ the algebraic closure of $\mathbb{F}_p$ and by $\overline{\mathbb{F}}_p[[t]]^{\times}$ the multiplicative group of all invertible elements in the power series ring  $\overline{\mathbb{F}}_p [[t]]$. Following King in \cite{King2} we set
$$T(Q) = \{ \chi : Q \lr \overline{\mathbb{F}}_p[[t]]^{\times} \ | \ \chi \hbox{ is a continuous homomorphism} \},$$
here $\overline{\mathbb{F}}_p[[t]]^{\times} $ is considered as a topological group with topology induced by the sequence of ideals $(t) \supseteq (t^2) \supseteq \ldots \supseteq (t^i) \supseteq \ldots $.
For $\chi \in T(Q)$  we define by 
$$\overline{\chi} : \mathbb{Z}_p[[Q]] \lr \overline{\mathbb{F}}_p[[t]]$$ the unique continuous ring homomorphism
that extends the continuous homomorphism $\chi$.

Let $A$ be a finitely generated pro-$p$ $\mathbb{Z}_p[[Q]]$-module. In \cite{King2} King defined
the following invariant
$$ \Delta(A) = \{  \chi \in T(Q) \ | \ ann_{\mathbb{Z}_p[[Q]]}(A) \subseteq Ker (\overline{\chi})  \}. $$
\iffalse{In \cite{King2} King used the notation $\Xi(A)$, that we here substitute by $\Delta(A)$.}\fi

\iffalse{Let $Q_0$ be a pro-$p$ subgroup of $Q$. Following King we define $T(Q, Q_0) = \{ \chi \in T(Q) \ | \ \chi(Q_0) = 1 \}.$

\begin{theorem} \cite[Thm B]{King2},  \cite[Lemma 2.5]{King2} \label{fin-gen-module}  Let $Q$ be a finitely generated abelian pro-$p$ group.
Let $V$ be a finitely generated pro-$p$ $\mathbb{Z}_p[[Q]]$-module.

a) Then $V$ is finitely generated as an abelian pro-$p$ group if and only if $\Delta(V) = \{ 1 \}$.

b) If $Q_0$ is  a pro-$p$ subgroup of $Q$ then $T(Q, Q_0) \cap \Delta(V) = \Delta(V/ [V, Q_0]).$ In particular $V$ is finitely generated as a pro-$p$ $\mathbb{Z}_p[[Q_0]]$-module if and only if $T(Q, Q_0) \cap \Delta(V) = \{1 \}$.
\end{theorem} }\fi

The following result is an alternative  classification of the finitely presented metabelian pro-$p$ groups given by King in \cite{King2}.

\begin{theorem} \cite{King2} \label{KK} Let $1 \to A \to G \to Q \to 1$ be a short exact sequence of  pro-$p$ groups, where $G$ is a (topologically) finitely generated pro-$p$ group and $A$ and $Q$ are abelian. Then $G$ is a finitely presented pro-$p$ group if and only if $\Delta(A) \cap \Delta(A)^{-1} = \{ 1 \}$.
\end{theorem}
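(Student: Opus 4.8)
The plan is to deduce Theorem~\ref{KK} from the homological criterion of Theorem~\ref{jeremy2}, converting the finite generation of $A \widehat{\wedge} A$ into a statement about the invariant $\Delta$. By the remark following Theorem~\ref{jeremy2}, $A$ is finitely generated as a $\mathbb{Z}_p[[Q]]$-module, and by that theorem $G$ is finitely presented if and only if $A \widehat{\wedge} A$ is finitely generated over $\mathbb{Z}_p[[Q]]$ under the \emph{diagonal} $Q$-action. The difficulty is that $\Delta$ is attached to a module already finitely generated over a \emph{single} copy of $\mathbb{Z}_p[[Q]]$, whereas the full $Q \times Q$-action (acting separately on the two tensor factors) does not descend to the exterior square — only the diagonal action does. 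I would therefore first work with the tensor square, which does carry the two-variable action.

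First I would set $\tilde Q = Q \times Q$ and regard $W = A \widehat{\otimes}_{\mathbb{Z}_p} A$ as a $\mathbb{Z}_p[[\tilde Q]] = \mathbb{Z}_p[[Q]] \widehat{\otimes} \mathbb{Z}_p[[Q]]$-module via $(q_1,q_2)\cdot(a\otimes b) = q_1 a \otimes q_2 b$; since $A$ is finitely generated over $\mathbb{Z}_p[[Q]]$, $W$ is finitely generated over $\mathbb{Z}_p[[\tilde Q]]$, so $\Delta_{\tilde Q}(W)$ is defined. The diagonal $\Delta_Q = \{(q,q)\} \cong Q$ is a closed subgroup of $\tilde Q$, and finite generation of $W$ over $\mathbb{Z}_p[[\Delta_Q]]$ is exactly what we must understand. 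For this I would invoke King's computation of $\Delta$ under restriction to a subgroup \cite{King2}: $W$ is finitely generated over $\mathbb{Z}_p[[\Delta_Q]]$ if and only if $T(\tilde Q,\Delta_Q) \cap \Delta_{\tilde Q}(W) = \{1\}$, where $T(\tilde Q,\Delta_Q) = \{\chi \in T(\tilde Q) : \chi|_{\Delta_Q} = 1\}$. Two computations then finish this step. Since $\overline{\mathbb{F}}_p[[t]]^{\times}$ is abelian, $T(\tilde Q) = T(Q) \times T(Q)$, and the condition $\chi(q,q) = \chi_1(q)\chi_2(q) = 1$ identifies $T(\tilde Q,\Delta_Q)$ with the graph $\{(\chi,\chi^{-1}) : \chi \in T(Q)\}$ of inversion. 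A support formula for external tensor products — which I would prove in the completed setting, checking that characters detect the annihilator — gives $\Delta_{\tilde Q}(W) = \Delta(A) \times \Delta(A)$. Intersecting, $T(\tilde Q,\Delta_Q)\cap \Delta_{\tilde Q}(W) = \{(\chi,\chi^{-1}) : \chi \in \Delta(A)\cap\Delta(A)^{-1}\}$, which reduces to the trivial character exactly when $\Delta(A) \cap \Delta(A)^{-1} = \{1\}$. Thus $W$ is finitely generated over the diagonal if and only if $\Delta(A)\cap\Delta(A)^{-1} = \{1\}$.

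It remains to pass from the tensor square $W$ to the exterior square $A\widehat{\wedge} A$, which is a diagonal-equivariant quotient of $W$. One implication is immediate: if $\Delta(A)\cap\Delta(A)^{-1} = \{1\}$ then $W$ is finitely generated over $\mathbb{Z}_p[[Q]]$, hence so is its quotient $A\widehat{\wedge} A$, and Theorem~\ref{jeremy2} yields that $G$ is finitely presented. For the converse I would argue contrapositively: given a nontrivial $\chi \in \Delta(A)\cap\Delta(A)^{-1}$, I must exhibit an obstruction to $A\widehat{\wedge} A$ being finitely generated over the diagonal.

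The main obstacle is precisely this converse, because the exterior square can have strictly smaller support than the tensor square (for $p \neq 2$ one has $W = \mathrm{Sym}^2 A \oplus A\widehat{\wedge} A$ as diagonal modules, and finite generation of the wedge summand alone does not recover that of $W$). The task is to bound the support of $A\widehat{\wedge} A$ from below. The mechanism I expect to use is that a nontrivial $\chi \in \Delta(A) \cap \Delta(A)^{-1}$ produces, at the relevant valuation, two \emph{distinct} weights $\chi \neq \chi^{-1}$ occurring in $A$; a corresponding decomposable element $v \wedge w$ is then nonzero in the exterior square precisely because the weights differ, and it carries trivial diagonal weight $\chi \cdot \chi^{-1} = 1$, which is what defeats finite generation over $\mathbb{Z}_p[[\Delta_Q]]$. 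This also explains structurally why the trivial character must be excluded: the self-paired contribution dies, since $v \wedge v = 0$. Making this rigorous in the completed pro-$p$ category — controlling weights over $\overline{\mathbb{F}}_p[[t]]$ and handling the prime $p = 2$, where the symmetric–exterior splitting of $W$ fails — is the delicate heart of the argument; the remaining steps are formal consequences of Theorem~\ref{jeremy2} together with the restriction and tensor-support properties of $\Delta$.
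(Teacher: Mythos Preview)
The paper does not prove Theorem~\ref{KK}: it is stated as a result of King \cite{King2} and no argument is given in this survey. There is therefore no proof in the paper to compare your proposal against; the original argument resides in King's paper.

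On its own merits, your outline is a plausible reconstruction of the bridge between Theorem~\ref{jeremy2} and Theorem~\ref{KK}, and the forward implication is essentially complete. From the easy inclusion $\Delta_{\tilde Q}(A\widehat{\otimes}A)\subseteq\Delta(A)\times\Delta(A)$ (which follows at once since $I\widehat{\otimes}R+R\widehat{\otimes}I$ annihilates the tensor square, where $I=\mathrm{ann}_{\mathbb{Z}_p[[Q]]}(A)$), together with King's restriction criterion, you correctly deduce that $\Delta(A)\cap\Delta(A)^{-1}=\{1\}$ forces $A\widehat{\otimes}A$, and hence its quotient $A\widehat{\wedge}A$, to be finitely generated over the diagonal copy of $\mathbb{Z}_p[[Q]]$.

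The converse, however, is a genuine gap, and you acknowledge as much. The ``two distinct weights'' heuristic is the right intuition, but you have not turned it into an argument: you must show that a nontrivial $\chi\in\Delta(A)\cap\Delta(A)^{-1}$ actually produces an obstruction at the level of $A\widehat{\wedge}A$ (not merely $A\widehat{\otimes}A$), and your sketch does not control either the passage from annihilators to explicit module elements over $\overline{\mathbb{F}}_p[[t]]$ or the $p=2$ case. King's treatment in \cite{King2} goes through his valuation-theoretic description of $\Delta$ and its behaviour under module constructions rather than an ad hoc weight argument; filling your gap rigorously would essentially require reproducing that machinery. As it stands, your proposal proves one direction and only gestures at the other.
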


\section{Algebraic fibering for pro-$p$ groups} \label{sec:extpro}

The main purpose of this section is to discuss the pro-$p$ versions of some of the results in Section \ref{sec:main}. The proofs can be found in \cite{Koch}.

\begin{theorem} \label{Main1p} \cite{Koch} Let $1 \to K \to G \to \Gamma \to 1$ be a short exact sequence of pro-$p$ groups and $n_0 \geq 1$ be an integer such that 
\begin{enumerate}
\item $G$ and $K$ are of type $FP_{n_0}$, 
\item $\Gamma^{ab}$ is infinite,
\item there is a normal pro-$p$ subgroup $N$ of $K$ such that $G'\cap K \subseteq N$, $K/ N \simeq \mathbb{Z}_p$ and $N$ is of type $FP_{n_0-1}$. 
\end{enumerate}
Then there is a normal pro-$p$ subgroup $M$ of $G$ such that $G/ M \simeq \mathbb{Z}_p$, $M \cap K = N$ and $M$ is of type $FP_{n_0}$. Furthermore if $K$, $G$ and $N$ are of type $FP_{\infty}$ then $M$ can be chosen of type $FP_{\infty}$.
\end{theorem}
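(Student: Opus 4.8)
The plan is to convert the statement into pro-$p$ homological algebra over the completed group algebra, where the lack of the geometric (BNSR) machinery used in the abstract proof must be compensated by a direct spectral-sequence computation, and to isolate the top-degree term as the only genuinely hard point.

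\emph{Reduction and the character family.} First I would note that $N$ is normal in $G$: since $[G,K]\subseteq G'\cap K\subseteq N$ and $N\unlhd K$, we get $[G,N]\subseteq N$. Passing to $\bar G=G/N$ gives a central extension $1\lr \Z_p\lr \bar G\lr \Gamma\lr 1$ with $\bar K:=K/N\cong\Z_p$, and the hypothesis $G'\cap K\subseteq N$ forces $\bar K$ to inject into $\bar G^{ab}$. As $\bar K$ is torsion-free, dualizing over $\Q_p$ and clearing denominators yields $\psi_0\colon G\to\Z_p$ with $\psi_0|_K=c\,\phi$ for some $c\neq 0$, so $\Ker(\psi_0|_K)=N$. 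Using that $\Gamma^{ab}$ is infinite I pick $0\neq\rho\in\Hom(\Gamma,\Z_p)$ and form the family $\psi=\psi_0+\lambda\rho^{*}$, $\lambda\in\Z_p$; since $\rho^{*}|_K=0$, every member restricts to $c\,\phi$ on $K$, hence $M:=\Ker\psi$ satisfies $M\cap K=N$ and $G/M\cong\Z_p$ for all $\lambda$. The whole problem is then to choose $\lambda$ so that $M$ has the prescribed finiteness type.

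\emph{The homological criterion.} Put $\Lambda=\F_p[[\Z_p]]\cong\F_p[[s]]$, a complete DVR, with fraction field $\mathcal L=\F_p(\!(s)\!)$, viewed as a $G$-module through $\psi$. By Shapiro's lemma $H_i(M;\F_p)\cong H_i(G;\Lambda)$, and since $G$ is of type $FP_{n_0}$ and $\Lambda$ is Noetherian these are finitely generated $\Lambda$-modules for $i\le n_0$; such a module is finite iff it is $\Lambda$-torsion iff it vanishes after the flat base change to $\mathcal L$. Thus $M$ is of type $FP_{n_0}$ if and only if $H_i(G;\mathcal L)=0$ for all $i\le n_0$. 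Next, because $\psi|_K=c\,\phi$ kills $N$, the $K$-action on $\mathcal L$ factors through $K/N\cong\Z_p$ acting by multiplication by $u=(1+s)^{c}$. Feeding $1\lr N\lr K\lr\Z_p\lr 1$ into its Lyndon--Hochschild--Serre sequence (only $H_0,H_1$ of $\Z_p$ survive, as $\mathrm{cd}_p(\Z_p)=1$), for each $b$ with $H_b(N;\F_p)$ finite the topological generator acts on $H_b(N;\F_p)\otimes_{\F_p}\mathcal L$ by $\sigma_b\otimes u$, where $\sigma_b$ is a $p$-element of $\mathrm{GL}(H_b(N;\F_p))$, hence unipotent; so $\sigma_b\otimes u-1$ has all eigenvalues equal to $u-1\neq 0$ and is invertible over $\mathcal L$, giving $H_a(\Z_p;H_b(N;\F_p)\otimes\mathcal L)=0$ for $a=0,1$. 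As $N$ is $FP_{n_0-1}$, $H_b(N;\F_p)$ is finite for $b\le n_0-1$, whence $H_q(K;\mathcal L)=0$ for $q\le n_0-1$. Inserting this into the LHS sequence of $1\lr K\lr G\lr\Gamma\lr 1$ collapses everything below the top row, leaving $H_i(G;\mathcal L)=0$ for $i\le n_0-1$ and
\[ H_{n_0}(G;\mathcal L)=H_{n_0}(K;\mathcal L)_{\Gamma}. \]
This same unipotent argument immediately settles the $FP_{\infty}$ case: if $N$ is $FP_{\infty}$ then every $H_b(N;\F_p)$ is finite, so $H_q(K;\mathcal L)=0$ in all degrees and $H_i(G;\mathcal L)=0$ for all $i$ and \emph{every} admissible $\psi$, with no perturbation needed.

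\emph{The main obstacle.} What remains is to choose $\lambda$ so that the top coinvariants $H_{n_0}(K;\mathcal L)_{\Gamma}=H_{n_0}(G;\mathcal L)$ vanish; this is the crux, and the pro-$p$ substitute for the \emph{openness} of the BNSR invariants invoked in the abstract proof. The difficulty is that $H_{n_0}(N;\F_p)$ is genuinely infinite-dimensional (only $FP_{n_0-1}$ is assumed), so the determinant trick fails and one must argue by genericity: (i) use that $G$ is $FP_{n_0}$ to show $H_{n_0}(N;\F_p)$ is finitely generated over $\F_p[[\bar G]]$, equivalently that $H_{n_0}(K;\mathcal L)$ is finitely generated over $\mathcal L[[\Gamma]]$; (ii) show this module is \emph{torsion} over the completed algebra of the free part of $\Gamma^{ab}$, so that its support is a proper Zariski-closed subset of the associated character variety; and (iii) exploit $\Gamma^{ab}$ infinite, so that $\lambda\rho^{*}$ moves $[\psi]$ along a positive-dimensional family, to land off this support, where the localized top coinvariants vanish. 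Step (ii) is the hard point — finite generation together with the lower-degree vanishing does \emph{not} force the top homology to be torsion — and this is where the structure of the extension must be used. The most natural route, mirroring the abstract argument, is to replace $\Gamma$ by a free pro-$p$ cover $F\to\Gamma$, write the pullback $\Pi=K\rtimes F$ as an iterated pro-$p$ amalgam $\Pi_1\ast_K\cdots\ast_K\Pi_d$ of HNN pieces over $K$, and run Mayer--Vietoris to identify $H_{n_0}(\Pi;\mathcal L)$ with the $F$-coinvariants of $H_{n_0}(K;\mathcal L)$; the $d$-dimensional space of characters on the free group $F$ makes the generic-vanishing/torsion statement tractable, after which one descends the chosen character back to $G$ along the free pro-$p$ kernel $S=\Ker(F\to\Gamma)$.
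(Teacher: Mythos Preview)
Your reduction is clean and essentially matches the paper's homological philosophy: the normality of $N$ in $G$, the construction of the one-parameter family $\psi=\psi_0+\lambda\rho^{*}$, the King-type criterion reducing $FP_{n_0}$ for $M$ to finiteness of $H_i(M;\F_p)$, and the Lyndon--Hochschild--Serre collapse below the top degree are all correct. The unipotent argument for $q\le n_0-1$ is a nice touch, and your observation that the $FP_{\infty}$ case falls out with no perturbation is right.

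Where your route diverges from the paper is in the two remaining ingredients. First, you work throughout with the fraction field $\mathcal{L}=\F_p(\!(s)\!)$; this is not a profinite module, so every use of $H_*(-;\mathcal{L})$ and of the LHS spectral sequence with these coefficients needs to be justified as ``compute with $\Lambda$, then tensor by the flat $\mathcal{L}$''. This can be made rigorous, but the paper avoids the issue entirely by staying inside the profinite world and using King's $\Delta$-invariant (Section~\ref{sec:ab}) as the pro-$p$ substitute for ``generic vanishing'': it detects exactly when a finitely generated $\Z_p[[Q]]$-module is finitely generated over a smaller subalgebra, and plays the role your torsion/support argument is reaching for.

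Second, and more seriously, your descent from the free cover is where the genuine gap lies. You want to exploit the full $d$-dimensional space of characters on $F$ to get generic vanishing of $H_{n_0}(\Pi;\mathcal{L})$, and then ``descend the chosen character back to $G$''. But a generic character on $F$ does \emph{not} factor through $\Gamma$; for those that do, passing to $\Pi$ buys nothing, since then $H_{n_0}(\Pi;\mathcal{L})=H_{n_0}(K;\mathcal{L})_F=H_{n_0}(K;\mathcal{L})_\Gamma=H_{n_0}(G;\mathcal{L})$. For those that do not, you obtain a kernel $\widetilde M\le\Pi$ of type $FP_{n_0}$ with $\widetilde M\cap K=N$, but no character on $G$ --- and the surjection $\Pi\to G$ does not by itself transfer $FP_{n_0}$ downstairs, because $\ker(F\to\Gamma)$ is typically infinitely generated. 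This is precisely the step the paper isolates as Lemma~\ref{L1p}, the pro-$p$ Kuckuck lemma: given $1\to N\to M\to\Gamma\to 1$ with $N$ of type $FP_{n_0-1}$ and $\Gamma$ of type $FP_{n_0}$, the existence of \emph{any} auxiliary extension $1\to N\to B_0\to C_0\to 1$ with $B_0$ of type $FP_{n_0}$ and a map $B_0\to M$ restricting to $\mathrm{id}_N$ forces $M$ to be $FP_{n_0}$. Your $\widetilde M$ is exactly such a $B_0$. Without this lemma (or an equivalent transfer principle), your sketch does not close.
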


We note that as in the case of abstract groups, in the pro-$p$ case condition 1) from Theorem \ref{Main1p} implies that $\Gamma$ is of type $FP_{n_0}$. Actually it suffices that $G$ is of type $FP_{n_0}$ and $K$ is of type $FP_{n_0-1}$.

We say that a pro-$p$ group $G$ is coherent (as a pro-$p$ group or in the category of pro-$p$ groups) if every finitely generated pro-$p$ subgroup of $G$ is finitely presented as a pro-$p$ group. This  generalises to higher dimensions: a pro-$p$ group $G$ is $n$-coherent if any pro-$p$ subgroup of $G$ that is of  type $FP_n$ is  of type $FP_{n+1}$.  In this notation a pro-$p$ group is 1-coherent if and only if  it is coherent (in the category of pro-$p$ groups).

\begin{corollary} \label{cor1p} \cite{Koch} Let $K$, $\Gamma$ and $G = K \rtimes \Gamma$ be pro-$p$ groups and $n_0 \geq 1$ be an integer such that
\begin{enumerate}
\item $\Gamma$ is finitely generated free pro-$p$ but not  procyclic,
\item $K$ is of type  $FP_{n_0}$, 
\item there is a normal pro-$p$ subgroup $N$ of $K$ such that  $G'\cap K \subseteq N$, $K/ N \simeq \mathbb{Z}_p$ and $N$ is of type $FP_{n_0 -1}$ but is not of type $FP_{n_0}$.  
\end{enumerate}
Then there is a normal pro-$p$ subgroup $M$ of $G$ such that $G/ M \simeq \mathbb{Z}_p$, $M \cap K = N$ and $M$ is of type $FP_{n_0}$ but is not of type $FP_{n_0 + 1}$. In particular $G$ is not $n_0$-coherent.
\end{corollary}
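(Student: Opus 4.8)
The plan is to obtain the subgroup $M$ and its type $FP_{n_0}$ directly from Theorem \ref{Main1p}, and then to prove the failure of type $FP_{n_0+1}$ by a homological computation built on the product decomposition of $G/N$ forced by the hypothesis $G' \cap K \subseteq N$. First I would verify the hypotheses of Theorem \ref{Main1p}: since $\Gamma$ is free pro-$p$ of finite rank it is of type $FP_\infty$, so $G = K \rtimes \Gamma$ is of type $FP_{n_0}$ because $K$ is; since $\Gamma$ is non-procyclic, $\Gamma^{ab} \cong \mathbb{Z}_p^{d}$ with $d \geq 2$ is infinite; and condition (3) of the corollary is precisely condition (3) of the theorem. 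Theorem \ref{Main1p} then yields a normal pro-$p$ subgroup $M \unlhd G$ with $G/M \cong \mathbb{Z}_p$, $M \cap K = N$ and $M$ of type $FP_{n_0}$. It remains to show $M$ is not of type $FP_{n_0+1}$, equivalently (by the homological criterion of Section \ref{sec:hompro}) that $H_{n_0+1}(M, \mathbb{F}_p)$ is infinite.

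The structural heart is the identification of $G/N$. Since $K \unlhd G$, for $g \in G$ and $k \in K$ one has $[g,k] \in G' \cap K \subseteq N$, so $K/N$ is central in $G/N$; as $G/K \cong \Gamma$ is free pro-$p$ we have $cd_p(\Gamma) \le 1$, hence $H^2(\Gamma, \mathbb{Z}_p) = 0$, so the central extension $1 \to \mathbb{Z}_p \cong K/N \to G/N \to \Gamma \to 1$ splits and $G/N \cong \mathbb{Z}_p \times \Gamma$. Setting $Q := M/N$, the composite $K/N \hookrightarrow G/N \to G/M \cong \mathbb{Z}_p$ has kernel $(M\cap K)/N = 1$, so it is injective; thus $MK$ has finite index in $G$ and $Q \cong MK/K$ is open in $\Gamma$. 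Being a closed subgroup of a free pro-$p$ group, $Q$ is free pro-$p$, and being open in a non-procyclic free pro-$p$ group it has rank $d' \ge 2$. The essential gain from the splitting is that the central factor $\mathbb{Z}_p = K/N$ makes $S := \mathbb{F}_p[[\mathbb{Z}_p]] \cong \mathbb{F}_p[[t]]$ a central subalgebra of $\mathbb{F}_p[[G/N]]$, so the $Q$-action on any $\mathbb{F}_p[[G/N]]$-module is $S$-linear.

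Now set $V := H_{n_0}(N, \mathbb{F}_p)$, a pro-$p$ $\mathbb{F}_p[[G/N]]$-module, and I would extract three facts. Since $N$ is not of type $FP_{n_0}$, $V$ is infinite-dimensional over $\mathbb{F}_p$. Applying the pro-$p$ Lyndon--Hochschild--Serre spectral sequence to $1 \to N \to K \to \mathbb{Z}_p \to 1$ (which collapses to short exact sequences since $cd_p(\mathbb{Z}_p)\le 1$) and using that $K$ is $FP_{n_0}$, the coinvariants $V_{\mathbb{Z}_p} = H_0(\mathbb{Z}_p, V)$ are finite; by pro-$p$ Nakayama over the local ring $S$ this forces $V$ to be finitely generated over $S$, and being infinite-dimensional it has free $S$-rank $r \ge 1$. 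Applying the same spectral sequence to $1 \to N \to M \to Q \to 1$ and using that $M$ is $FP_{n_0}$ gives that $V_Q = H_0(Q, V)$ is finite.

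The crux is to show $H_1(Q, V)$ is infinite-dimensional; here a naive count over $\mathbb{F}_p$ fails (for instance $H_1(Q, \mathbb{F}_p[[Q]]) = 0$ although $\mathbb{F}_p[[Q]]$ is infinite-dimensional), so I would instead count ranks over the DVR $S$. Since $Q$ is free pro-$p$ of rank $d'$, $H_1(Q, V)$ is the kernel of the Fox-derivative map $\delta \colon V^{d'} \to V$, and by centrality of $S$ this is a map of finitely generated $S$-modules. As $S$ is a Noetherian domain, additivity of $S$-rank along $0 \to \ker\delta \to V^{d'} \to V \to V_Q \to 0$, together with $\operatorname{rank}_S V_Q = 0$, yields $\operatorname{rank}_S H_1(Q, V) = (d'-1)r \ge 1$; a finitely generated $S$-module of positive rank is infinite-dimensional over $\mathbb{F}_p$, so $H_1(Q, V)$ is infinite. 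Finally, the collapsed spectral sequence for $1 \to N \to M \to Q \to 1$ gives in total degree $n_0+1$ an injection $H_1(Q, V) \hookrightarrow H_{n_0+1}(M, \mathbb{F}_p)$, whence $M$ is not of type $FP_{n_0+1}$; as $M \unlhd G$ has type $FP_{n_0}$ but not $FP_{n_0+1}$, $G$ is not $n_0$-coherent. I expect the main obstacle to be exactly the infiniteness of $H_1(Q,V)$: the $S$-rank argument only becomes legitimate once the splitting $G/N \cong \mathbb{Z}_p \times \Gamma$ supplies the central DVR action and once both $V_{\mathbb{Z}_p}$ and $V_Q$ are known finite, so these ingredients must be assembled before the rank count can be run.
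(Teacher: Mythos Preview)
Your argument is essentially correct and self-contained. The paper is a survey and does not give a proof of this corollary, only pointing to \cite{Koch} and indicating that the methods there use the Lyndon--Hochschild--Serre spectral sequence, finite-generation arguments for pro-$p$ modules, and (in some places) King's $\Delta$-invariant. Your route keeps the first two ingredients but replaces any appeal to King's invariant by a rank count over the DVR $S=\mathbb{F}_p[[t]]$: the structural observation that $G/N\cong\mathbb{Z}_p\times\Gamma$ makes $S$ central in $\mathbb{F}_p[[G/N]]$, so the Fox-derivative map $\delta\colon V^{d'}\to V$ is $S$-linear between finitely generated $S$-modules, and additivity of $S$-rank along $0\to\ker\delta\to V^{d'}\to V\to V_Q\to 0$ yields $\operatorname{rank}_S H_1(Q,V)=(d'-1)r\ge 1$. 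This is a pleasantly elementary substitute that exploits exactly the semidirect-product hypothesis of the corollary (which guarantees the split central extension).

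One small correction: in the last step the collapsed two-column spectral sequence for $1\to N\to M\to Q\to 1$ produces
\[
0\;\longrightarrow\;H_0\bigl(Q,H_{n_0+1}(N,\mathbb{F}_p)\bigr)\;\longrightarrow\;H_{n_0+1}(M,\mathbb{F}_p)\;\longrightarrow\;H_1(Q,V)\;\longrightarrow\;0,
\]
so $H_1(Q,V)$ is a \emph{quotient} of $H_{n_0+1}(M,\mathbb{F}_p)$, not a subobject. This does not affect the conclusion: a surjection onto an infinite-dimensional $\mathbb{F}_p$-space forces the source to be infinite-dimensional, hence $M$ is not of type $FP_{n_0+1}$.
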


The proofs of the above results are not geometric as in the abstract case, since we do not have geometric tools here as $\Sigma$-theory. But we  use homological methods that are more appropriate for the category of pro-$p$
groups, including the Lyndon-Hochschild-Serre spectral sequence. We also  reduce several problems to studying finite generation of some particular pro-$p$ modules and in some cases use King´s $\Delta$-invariant. Though it has no geometric flavour it should be considered as the pro-$p$ version of the complement of the Bieri-Neumann-Strebel $\Sigma^1$ invariant.

 In \cite{Koch} we use the following result that is a pro-$p$ version of a result of Kuchkuck from \cite{K}. It is interesting to note that the map $\theta$ from the statement of Lemma \ref{L1p} is not supposed to be surjective or injective. Lemma  \ref{L1p} is a very surprising result that needs more attention and deserves consideration not only as an auxiliar result used in the proof of Theorem \ref{Main1p}.

\begin{lemma} \label{L1p} \cite{Koch}  Let $n \geq 1$ be a natural number, \[ 1 \lr A \lr  B \lr  C \lr 1 \] a short exact sequence of pro-$p$ groups with  $A$ of type $FP_n$ and $C$ of type $FP_{n+1}$. Assume there is another short exact sequence of pro-$p$ groups \[ 1 \lr A \lr B_0 \lr C_0 \lr 1 \] with $B_0$ of type $FP_{n+1}$ and that there is a homomorphism of pro-$p$ groups $\theta: B_0 \rightarrow B$ such that $\theta|_A = id_A$,  i.e. there is a commutative diagram of homomorphisms of pro-$p$ groups 
\[ \xymatrix{
1\ar[r]&
A \ar[r]\ar[d]_{id_{A}} &
B_0 \ar[d]_{\theta} \ar[r]^{\pi_0} & C_0 \ar@{.>}[d]^{\nu} \ar[r]&1\\
 1\ar[r]& A 
 \ar[r]& B \ar[r]^{\pi}  &
C \ar[r]  &1 } \]
  Then $B$ is of type $FP_{n+1}$.
\end{lemma}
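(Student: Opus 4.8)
The plan is to work entirely with $\mathbb{F}_p$-homology, exploiting the criterion recorded in Section \ref{sec:hompro} that a pro-$p$ group $G$ is of type $FP_m$ if and only if $H_i(G,\mathbb{F}_p)$ is finite for all $i\le m$. Thus it suffices to prove that $H_i(B,\mathbb{F}_p)$ is finite for $i\le n+1$, and the natural tool is the pro-$p$ Lyndon--Hochschild--Serre spectral sequence of the bottom extension,
\[ E^2_{p,q}=H_p\big(C,\,H_q(A,\mathbb{F}_p)\big)\ \Longrightarrow\ H_{p+q}(B,\mathbb{F}_p). \]
Since $A$ is of type $FP_n$, the coefficient modules $H_q(A,\mathbb{F}_p)$ are finite for $q\le n$; since $C$ is of type $FP_{n+1}$, the groups $H_p(C,V)$ are finite for every finite module $V$ and every $p\le n+1$. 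Consequently every term $E^2_{p,q}$ with $q\le n$ and $p\le n+1$ is finite. In degrees $i\le n$ this already forces $H_i(B,\mathbb{F}_p)$ to be finite, so $B$ is of type $FP_n$, and on the antidiagonal $p+q=n+1$ the only term not visibly finite is the corner $E^2_{0,n+1}=\big(H_{n+1}(A,\mathbb{F}_p)\big)_C$, the module of $C$-coinvariants. A bookkeeping check shows this is the heart of the matter: all differentials entering $E^r_{0,n+1}$ originate in positions $(r,n+2-r)$ whose second index is $\le n$, hence from finite groups, while no differential leaves the corner; therefore $E^\infty_{0,n+1}$ differs from $E^2_{0,n+1}$ by a finite subgroup, and $H_{n+1}(B,\mathbb{F}_p)$ is finite as soon as $\big(H_{n+1}(A,\mathbb{F}_p)\big)_C$ is.

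The whole problem thus reduces to proving that $M_C$ is finite, where I abbreviate $M:=H_{n+1}(A,\mathbb{F}_p)$. This is where the auxiliary extension enters. Running the identical spectral-sequence analysis for $1\to A\to B_0\to C_0\to 1$ --- noting that $B_0$ of type $FP_{n+1}$ together with $A$ of type $FP_n$ forces $C_0$ to be of type $FP_{n+1}$, just as in the remark following Theorem \ref{Main1p} --- I obtain that $E^\infty_{0,n+1}$ for $B_0$ is a subquotient of the finite group $H_{n+1}(B_0,\mathbb{F}_p)$, hence finite. As before, the corner $E^2_{0,n+1}=M_{C_0}$ surjects onto $E^\infty_{0,n+1}$ with finite kernel (the differentials into the corner emanate from finite groups), so $M_{C_0}$ is finite.

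It remains to pass from finiteness of $M_{C_0}$ to finiteness of $M_C$, and this is the step that makes essential use of $\theta$. The commutativity of the diagram gives $\pi\circ\theta=\nu\circ\pi_0$, and because $\theta|_A=\mathrm{id}_A$, lifting $c_0\in C_0$ to $b_0\in B_0$ one checks that conjugation of $A$ by $\theta(b_0)$ coincides with conjugation by $b_0$; hence the $C$-action on $M$, pulled back along $\nu\colon C_0\to C$, is exactly the $C_0$-action coming from $B_0$. Writing $\overline{D}=\overline{\nu(C_0)}\le C$ for the closure of the image, this identifies $M_{C_0}$ with the coinvariants $M_{\overline{D}}$. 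Since $\overline{D}$ is a closed subgroup of $C$, the augmentation-ideal relations for $\overline{D}$ are contained in those for $C$, so $M_C$ is a quotient of $M_{\overline{D}}=M_{C_0}$ and is therefore finite. Combining the steps, $H_{n+1}(B,\mathbb{F}_p)$ is finite and $B$ is of type $FP_{n+1}$.

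I expect the transfer step to be the delicate one: the map $\nu$ is neither injective nor surjective, so the argument cannot compare $C_0$ and $C$ directly. The key point --- which is precisely what is surprising about the lemma --- is that non-surjectivity is harmless because coinvariants can only become smaller under passage to a larger group, while non-injectivity is irrelevant since only the pulled-back action is used; the hypothesis $\theta|_A=\mathrm{id}_A$ is exactly what guarantees that the two module structures on $M$ agree along $\nu$.
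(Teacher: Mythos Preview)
The paper does not actually prove this lemma --- it is stated with a citation to \cite{Koch} --- so there is no proof in the survey to compare against. Your overall strategy (reduce via the $\mathbb{F}_p$-homology criterion to the corner term $E^2_{0,n+1}$ of the two LHS spectral sequences and compare them through $\nu$) is the right one and matches how such arguments are carried out.

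There is, however, a gap in your deduction that $M_{C_0}$ is finite. You claim that ``the differentials into the corner emanate from finite groups'' because the source positions $(r,n+2-r)$ have second index $\le n$. But finiteness of $E^2_{r,n+2-r}(B_0)=H_r\big(C_0,H_{n+2-r}(A,\mathbb{F}_p)\big)$ requires \emph{both} that $H_{n+2-r}(A,\mathbb{F}_p)$ be finite \emph{and} that $C_0$ be of type $FP_r$. For $r=n+2$ the source is a subquotient of $H_{n+2}(C_0,\mathbb{F}_p)$, and you have only established that $C_0$ is $FP_{n+1}$, not $FP_{n+2}$. So the kernel of $M_{C_0}\twoheadrightarrow E^\infty_{0,n+1}(B_0)$ is not visibly finite, and you cannot conclude $M_{C_0}$ finite from $E^\infty_{0,n+1}(B_0)$ finite. (The same oversight occurs in your analysis for $B$, but there it is harmless: you only need the forward implication ``$M_C$ finite $\Rightarrow E^\infty_{0,n+1}(B)$ finite'', which holds because $E^\infty$ is a quotient of $E^2$.)

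The repair is to avoid the claim that $M_{C_0}$ is finite altogether. The homomorphism $\theta$ induces a morphism of LHS spectral sequences, and your own argument shows that at $E^2_{0,n+1}$ the map $M_{C_0}\to M_C$ is surjective. Since no differential leaves the corner, each successive page at $(0,n+1)$ is just the cokernel of the incoming $d_r$, and a surjection on a group descends to a surjection on any compatible cokernel; hence surjectivity propagates to every page and to $E^\infty$. Now $E^\infty_{0,n+1}(B_0)$ sits inside the finite group $H_{n+1}(B_0,\mathbb{F}_p)$, so its surjective image $E^\infty_{0,n+1}(B)$ is finite, and the remainder of your argument goes through unchanged.
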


\section{Coherence and incoherence for pro-$p$ groups} \label{sec:coh}

In \cite{KW22} Kropholler and Walsh asked  and studied the question whether for $F_k$ the free abstract group of rank $k$, $$\hbox{ every group }H = F_n \rtimes F_k \hbox{ with }n,k \geq 2 \hbox{ is not coherent.}$$ As we mentioned, this question also arose in \cite{Wi19}. This question and related topics were studied in \cite{K-V-W}, \cite{FV23}. The main point is to look for a subgroup $H_0$ of finite index in $H$, hence   it is  free-by-free, but with a better homological property, namely $$H_0 = K_0 \rtimes \Gamma_0$$ with $K_0$ and $\Gamma_0$ free of finite rank, $d(\Gamma_0) > 1$ and the short exact sequence $$1 \lr K_0 \lr H_0 \lr \Gamma_0 \lr 1 \hbox{ has excessive homology,}$$ which means that $$rk (H^1(H_0, \mathbb{R})) > rk(H^1(\Gamma_0, \mathbb{R})).$$
Unfortunately as shown in \cite{K-V-W} it is not always possible to find such $H_0$.

The situation for pro-$p$ groups is quite different. It makes sense to consider a pro-$p$ group $$G = K \rtimes \Gamma,$$ where $K$ and $\Gamma$ are (topologically) finitely generated, free, non-procyclic  pro-$p$ groups. Then $\Gamma$ acts by conjugation on $K$ and this gives an action of $\Gamma$ on the abelianization of $K$ i.e.  maximal
abelian pro-$p$ quotient of $K$, that is $\mathbb{Z}_p^n$ for appropriate positive integer $n \geq 2$. This gives a continuous homomorphism $$\rho \colon \Gamma \to GL_n(\mathbb{Z}_p).$$ Note that $GL_n(\mathbb{Z}_p)$ is a profinite group, that is virtually pro-$p$. A pro-$p$ group that (continuously) embeds in $GL_n(\mathbb{Z}_p)$ is pro-$p$ analytic and cannot have  non-procyclic free pro-$p$ subgroups (the class of analytic pro-$p$ groups is an important class of pro-$p$ groups, for more details see \cite{DSMS}).
This is the big difference with the abstract case. The group $GL_n(\mathbb{Z})$  has many free non-cyclic subgroups, but  
$GL_n(\mathbb{Z}_p)$ cannot have closed free non-procyclic  pro-$p$ subgroups! In particular $$\rho \hbox{ is not injective.}$$ Then we can take $\Gamma_0$ a non-procyclic, (topologically) finitely generated pro-$p$ subgroup of $Ker(\rho)$ and consider the group $$G_0 = K \rtimes \Gamma_0.$$ For this group we have that $\Gamma_0$ acts trivially  on the abelianization of $K$. We write $G_0'$ for the commutator subgroup of $G_0$ i.e. the closed subgroup (topologically) generated by $\{ [g_1, g_2] \ | \ g_1, g_2 \in G_0 \}$. Then $$G_0' \cap K = K'$$ and using that for any normal pro-$p$ subgroup $N$ of $K$ such that $K/ N \simeq \mathbb{Z}_p$ we have that $N$ is not (topologically) finitely generated, we can apply Corollary \ref{cor1p} i.e. we construct a pro-$p$ subgroup of $G_0$ that is (topologically) finitely generated but not finitely presented (in pro-$p$ sense),. Thus $G_0$ is pro-$p$ incoherent. 

 This argument can be generalised for a bigger class of groups denoted by $\mathcal{L}$. 
 The class  of pro-$p$ groups $\mathcal{L}$ was defined in \cite{K-Z} by Kochloukova and Zalesskii and should be considered as an attempt to generalise limit groups to the pro-$p$ setting. It contains all (topologically) finitely generated free pro-$p$ groups. A profinite version of it  was studied by Zalesskii and Zapata in \cite{Z-Z}.
 A pro-$p$ group from  $\mathcal{L}$ has many properties that resemble an abstract limit group. The class of abstract limit groups (over free groups) was studied independently by Sela and Kharlampovich, Myasnikov. It coincides with the class of all fully residually free abstract groups. Still we do not have a good understanding what the theory of fully residually free pro-$p$ groups should be. The definition of the class  $\mathcal{L}$ was an attempt to use the extension of centralizers construction of Kharlampovich, Myasnikov to study pro-$p$ groups.

\begin{corollary} \label{CorMain*} \cite{Koch}
Let $ 1 \to K \to G \to \Gamma \to 1$ be a short exact sequence of pro-$p$ groups such that 
\begin{enumerate} 
\item $K$ is a non-abelian pro-$p$ group from the class $\mathcal{L}$, 
\item $\Gamma$ contains a non-abelian, free pro-$p$ subgroup.
\end{enumerate}
Then $G$ is incoherent (in the category of pro-$p$ groups). In particular if $K$ is a finitely generated free pro-$p$ group  with $d(K) \geq 2$ then  $G$ is incoherent (in the category of pro-$p$ groups).
\end{corollary}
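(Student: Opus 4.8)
The plan is to reduce the statement to a single application of Corollary \ref{cor1p} with $n_0 = 1$, after replacing $\Gamma$ by a carefully chosen free non-procyclic pro-$p$ subgroup over which the extension splits and over which the induced action on $K^{ab}$ is trivial. Since incoherence passes to overgroups — a finitely generated, non-finitely-presented pro-$p$ subgroup of a subgroup of $G$ is still such a subgroup of $G$ — it suffices to exhibit an incoherent subgroup $G_0 \le G$.

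First I would exploit the linearity obstruction peculiar to the pro-$p$ world. The conjugation action of $G$ on $K$ descends to an action on the maximal abelian pro-$p$ quotient $K^{ab}$, which is trivial on inner automorphisms and hence factors through a continuous homomorphism $\rho \colon \Gamma \to \Aut(K^{ab})$. As $K$ lies in $\mathcal{L}$ it is finitely generated, so $K^{ab}$ is a finitely generated abelian pro-$p$ group and $\Aut(K^{ab})$ is virtually $p$-adic analytic; in particular it contains no non-procyclic free pro-$p$ subgroup (see \cite{DSMS}). Let $\Gamma_1 \le \Gamma$ be the given non-abelian free pro-$p$ subgroup. Then $\rho|_{\Gamma_1}$ cannot be injective, so $\Gamma_1 \cap \ker \rho$ is a nontrivial closed normal subgroup of $\Gamma_1$. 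A nontrivial closed normal subgroup of a non-abelian free pro-$p$ group is again free pro-$p$ of rank $\ge 2$ (there is no nontrivial normal procyclic subgroup), so I may select inside it a two-generated free pro-$p$ subgroup $\Gamma_0 \le \ker \rho$. This $\Gamma_0$ is finitely generated, non-procyclic, and acts trivially on $K^{ab}$.

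Next I would set $G_0 := q^{-1}(\Gamma_0)$, where $q \colon G \to \Gamma$ is the quotient map, obtaining an extension $1 \to K \to G_0 \to \Gamma_0 \to 1$. Because $\Gamma_0$ is free pro-$p$, hence projective, this extension splits and $G_0 \cong K \rtimes \Gamma_0$. Since $\Gamma_0$ acts trivially on $K^{ab}$ we have $[\Gamma_0, K] \subseteq K'$, so $K$ reaches the abelianization of $G_0$ only through $K^{ab}$, giving $G_0' \cap K = K'$. It then remains to find a normal $N \unlhd K$ with $K' \subseteq N$, $K/N \cong \mathbb{Z}_p$, and $N$ not finitely generated — equivalently, an algebraic fibration $K \to \mathbb{Z}_p$ with non-finitely-generated kernel. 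For $K$ free pro-$p$ of rank $\ge 2$ this is classical (the kernel is free pro-$p$ of infinite rank), which is exactly the ingredient highlighted in the discussion preceding the statement; for a general non-abelian $K \in \mathcal{L}$ I would invoke the corresponding finiteness property of the class $\mathcal{L}$ established in \cite{K-Z}. With such an $N$ at hand, the hypotheses of Corollary \ref{cor1p} are satisfied for $G_0 = K \rtimes \Gamma_0$ with $n_0 = 1$: the group $\Gamma_0$ is finitely generated free non-procyclic, $K$ is of type $FP_1$, and $N$ satisfies $G_0' \cap K = K' \subseteq N$, $K/N \cong \mathbb{Z}_p$, with $N$ of type $FP_0$ but not $FP_1$. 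The corollary then produces $M \unlhd G_0$ of type $FP_1$ but not $FP_2$, i.e. a finitely generated, non-finitely-presented pro-$p$ subgroup of $G_0 \le G$, which proves that $G$ is incoherent; the ``in particular'' assertion is precisely the special case where $K$ is free of rank $\ge 2$.

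The step I expect to be the main obstacle is securing, in the general case, a fibration $K \to \mathbb{Z}_p$ of $K \in \mathcal{L}$ with non-finitely-generated kernel — the pro-$p$ analogue of a failure of Stallings-type finiteness — since this is where the structural theory of $\mathcal{L}$ is genuinely required. Everything else is a reduction engineered so that the analytic-linearity obstruction supplies a free non-procyclic $\Gamma_0$ acting trivially on $K^{ab}$, a mechanism with no abstract counterpart and the source of the stark contrast with the free-by-free situation discussed above.
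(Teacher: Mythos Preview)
Your proposal is correct and follows essentially the same route as the paper: use the $p$-adic analyticity of $\Aut(K^{ab})$ to find a non-procyclic free pro-$p$ subgroup $\Gamma_0$ acting trivially on $K^{ab}$, pass to $G_0 = K \rtimes \Gamma_0$ (you make explicit the splitting via projectivity of free pro-$p$ groups, which the paper's sketch, written for the already-split free-by-free case, leaves implicit), verify $G_0' \cap K = K'$, and feed a fibration $K \twoheadrightarrow \mathbb{Z}_p$ with non-finitely-generated kernel into Corollary~\ref{cor1p} with $n_0=1$. Your identification of the one genuinely nontrivial ingredient --- the existence, for non-abelian $K\in\mathcal{L}$, of such a fibration --- matches the paper's reliance on the structural results of \cite{K-Z}.
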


We point out that not much is known for pro-$p$ coherence. But in a recent paper Snopce and Zalesskii proved that some pro-$p$ right angled Artin groups are coherent. More precisely if $\Gamma$ is a finite simplicial graph, the right angled Artin pro-$p$ group $G_{\Gamma}$ associated to $\Gamma$ is defined by the pro-$p$ presentation $$\langle V(\Gamma) \ | \ g_1 g_2 = g_2 g_1 \hbox {when } g_1, g_2 \hbox{ are connected by an edge  in } \Gamma \rangle_p.$$ This is the pro-$p$ completion of the corresponding abstract right angled Artin group. The graph $\Gamma$ is called chordal if  every circuit of length at least 4 has a chord ( i.e. there is an edge outside of the circuit that links two vertices from the circuit). Thus every circuit of minimal length should have precisely 3 edges.

\begin{proposition} \cite{S-Z}
Let $\Gamma$ be a chordal graph. Then every (topologically) finitely generated pro-$p$ subgroup of $G_{\Gamma}$ is of homological type $FP_{\infty}$. In particular $G_{\Gamma}$ is coherent (in the category of pro-$p$ groups).
\end{proposition}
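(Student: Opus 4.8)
The plan is to prove the stronger assertion that every topologically finitely generated pro-$p$ subgroup $H$ of $G_\Gamma$ is of type $FP_\infty$; the coherence statement then follows immediately, since a pro-$p$ group of type $FP_\infty$ is a fortiori of type $FP_2$, hence finitely presented. The starting point is a structural translation of chordality: a finite graph $\Gamma$ is chordal if and only if its maximal cliques can be arranged in a clique (junction) tree, in which case every minimal separator of $\Gamma$ is itself a clique. This yields the classical decomposition of the abstract RAAG $A_\Gamma$ as the fundamental group of a finite tree of groups whose vertex groups are the free abelian groups $A_K \cong \Z^{|K|}$ (one per maximal clique $K$) and whose edge groups are the free abelian groups $A_{K\cap K'} \cong \Z^{|K \cap K'|}$ attached to the separators. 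Because RAAGs are residually $p$ and this decomposition is compatible with the pro-$p$ topology, passing to pro-$p$ completions gives a parallel decomposition of $G_\Gamma$: it is the fundamental pro-$p$ group of a finite graph of pro-$p$ groups in which every vertex and edge group is a finitely generated free abelian pro-$p$ group $\mathbb{Z}_p^{a}$. Equivalently, $G_\Gamma$ acts on a pro-$p$ tree $T$ with finitely many orbits of cells and with all stabilizers isomorphic to some $\mathbb{Z}_p^{a}$.

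Next I would restrict this action to $H$. Every stabilizer in $H$ of a vertex or edge of $T$ is a closed subgroup of a conjugate of some $\mathbb{Z}_p^{a}$, and a closed subgroup of a finitely generated free abelian pro-$p$ group is again finitely generated free abelian (it is a finitely generated $\mathbb{Z}_p$-submodule of $\mathbb{Z}_p^{a}$, hence $\cong \mathbb{Z}_p^{c}$); in particular every such stabilizer is of type $FP_\infty$ with Noetherian completed group algebra, and no separate finite-generation argument for the pieces is needed. The essential step is then to upgrade the bare action to a finite decomposition: appealing to the theory of pro-$p$ groups acting on pro-$p$ trees (Ribes--Zalesskii), and using that $H$ is topologically finitely generated while the edge stabilizers are free abelian of finite rank, I would conclude that $H$ is itself the fundamental pro-$p$ group of a finite graph of pro-$p$ groups $\mathcal{H}$ whose vertex and edge groups are exactly these (free abelian) stabilizers.

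Granting the finiteness of $\mathcal{H}$, the conclusion is a formal homological computation. The action of $H$ on $T$ gives a short exact sequence of $\mathbb{F}_p[[H]]$-modules
\[ 0 \lr \bigoplus_{e} \mathbb{F}_p[[H/H_e]] \lr \bigoplus_{v} \mathbb{F}_p[[H/H_v]] \lr \mathbb{F}_p \lr 0, \]
with one summand per edge, respectively vertex, of the finite graph $\mathcal{H}$. Since each $H_e$ and $H_v$ is of type $FP_\infty$, Shapiro's lemma identifies the homology of $H$ with coefficients in each induced module with $H_i(H_e,\mathbb{F}_p)$, respectively $H_i(H_v,\mathbb{F}_p)$, which are finite; the associated long exact (Mayer--Vietoris) sequence in pro-$p$ homology then forces $H_i(H,\mathbb{F}_p)$ to be finite for all $i$, so $H$ is of type $FP_\infty$ by the homological criterion of Section \ref{sec:hompro}.

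The genuinely pro-$p$ obstacle is precisely the middle step. In the abstract setting the Bass--Serre and Kurosh subgroup theorems deliver, for free, that a subgroup of a finite graph of groups is again such a graph of groups on the induced stabilizers; but closed subgroups of pro-$p$ amalgams need not be pro-$p$ amalgams, and the quotient $H\backslash T$ is a priori only a profinite graph. Securing its finiteness --- equivalently, that $H$ really is the fundamental pro-$p$ group of a \emph{finite} graph of groups --- is where one must exploit that all edge groups are free abelian of finite rank, so that their completed group algebras $\mathbb{Z}_p[[\mathbb{Z}_p^{a}]]$ are Noetherian and the complexity of the decomposition is controlled. Once this finiteness is in hand, the remaining arguments are routine.
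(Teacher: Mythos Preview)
The paper does not contain a proof of this proposition: it is quoted as a result of Snopce and Zalesskii \cite{S-Z}, with no argument given. So there is nothing in the paper to compare your proposal against.

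As for the proposal itself: your overall architecture is the natural one and matches what one expects from \cite{S-Z} --- chordality gives a clique-tree, hence $G_\Gamma$ is the fundamental pro-$p$ group of a finite tree of finitely generated abelian pro-$p$ groups, and one then wants to restrict to $H$ and run Mayer--Vietoris. You are also candid about where the difficulty lies. The weakness is that your resolution of that difficulty is not an argument but a hope: you write that finiteness of $H\backslash T$ ``is where one must exploit that all edge groups are free abelian of finite rank, so that their completed group algebras $\mathbb{Z}_p[[\mathbb{Z}_p^{a}]]$ are Noetherian and the complexity of the decomposition is controlled,'' but you do not say \emph{how} Noetherianity of the edge-group algebras forces $H\backslash T$ to be finite, and there is no general theorem in pro-$p$ Bass--Serre theory that delivers this from Noetherianity alone. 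In the abstract setting one gets finiteness of the quotient graph from finite generation of $H$ via a compactness/accessibility argument; in the pro-$p$ setting one needs a genuine input (for instance, that the action is $p$-acylindrical, or a Wilton--Zalesskii/Ribes-type structure theorem tailored to abelian edge groups) and you have not supplied it. Until that step is made precise, the proof is a sketch with a declared gap rather than a complete argument.
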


 By \cite{Serre} a Demushkin pro-$p$ group $G$ is a Poincare duality group of dimension 2. Thus   $H^i(G, \mathbb{F}_p)$ is finite for all $i$, $\dim H^2(G, \mathbb{F}_p) = 1$ and the cup product $$\cup : H^i(G, \mathbb{F}_p) \times H^{2- i}(G, \mathbb{F}_p) \lr H^2(G, \mathbb{F}_p)$$ is a non-degenerated bilinear form for all $i \geq 0$.

There are two invariants associated to a Demushkin pro-$p$
group: the minimal number of (topological) generators $d$ and $q$ that is either
a power of the prime $p$ or $\infty$. More on Demushkin pro-$p$ groups can be found in \cite{W-book}.
 Demushkin pro-$p$ groups were described in terms of presentations  in \cite{Dem1}, \cite{Dem2}, \cite{La}, \cite{Se}. We state below the classification of Demushkin pro-$p$ groups  due to Demushkin, Labute, Serre.
 
\begin{theorem} \cite{Dem1}, \cite{Dem2} \label{d1p} Let $D$ be a Demushkin group with invariants $d, q$ and suppose
that $q \not= 2$. Then $d$ is even and $D$ is isomorphic to $F /R$, where $F$ is a free pro-$p$ group with basis $x_1, \ldots , x_d$ and $R$ is generated as a normal closed subgroup by
$$x_1^q [x_1, x_2] \ldots [x_{d-1}, x_d],$$
where for $q = \infty$ we define $x_1^{\infty} = 1$. Furthermore all groups having such presentations are Demushkin.
\end{theorem}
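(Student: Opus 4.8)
The plan is to reduce the whole statement to the normalization of a single defining relator, exploiting the non-degenerate cup product that comes packaged with the Poincar\'e duality hypothesis. First I would fix a minimal pro-$p$ presentation. Since $D$ is Demushkin we have $\dim_{\mathbb{F}_p} H^1(D,\mathbb{F}_p) = d < \infty$ and $\dim_{\mathbb{F}_p} H^2(D,\mathbb{F}_p) = 1$, so by the correspondence between minimal generators/relations and $H^1$/$H^2$ recalled in Section \ref{sec:hompro}, $D$ admits a minimal presentation $D \cong F/R$ with $F = F(x_1,\dots,x_d)$ free pro-$p$ and $R$ the smallest closed normal subgroup of $F$ containing a single relator $r \in \Phi(F) = F^p[F,F]$. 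The entire problem is thus to normalize $r$ by an automorphism of $F$. I would define $q$ intrinsically as the order of the torsion subgroup of the abelianization $D^{ab}$ (with $q = \infty$ when $D^{ab}$ is torsion free), which is visibly an isomorphism invariant.

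Next I would read $r$ modulo the third term of the lower $p$-central (Zassenhaus) filtration $D_i$ of $F$. Since $D_2/D_3$ has $\mathbb{F}_p$-basis $\{x_i^p\} \cup \{[x_i,x_j] : i<j\}$, one can write
\[ r \equiv \prod_i x_i^{p b_i}\,\prod_{i<j}[x_i,x_j]^{a_{ij}} \pmod{D_3}. \]
Dualizing, let $\chi_1,\dots,\chi_d$ be the basis of $H^1(D,\mathbb{F}_p)=\operatorname{Hom}(D,\mathbb{F}_p)$ dual to $x_1,\dots,x_d$, and let $\omega$ generate $H^2(D,\mathbb{F}_p)$. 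The standard formula linking a relator to the cohomology ring gives $\chi_i\cup\chi_j = \bar a_{ij}\,\omega$ for $i<j$, while the symmetric (diagonal) part of the form is the Bockstein $\chi\mapsto\chi\cup\chi=\beta\chi$. The Demushkin hypothesis forces this pairing to be non-degenerate. For $p$ odd, graded commutativity already gives $\chi\cup\chi=0$, so the form is alternating; for $p=2$ the diagonal equals $\beta\chi$, and the hypothesis $q\neq 2$ guarantees the $p$-power exponents lie in $p^2\mathbb{Z}_p$, so they die modulo $D_3$ and the form is again alternating. A non-degenerate alternating form has even rank, whence $d$ is even, and a symplectic change of basis brings $r$ to the form $x_1^{p b_1}[x_1,x_2][x_3,x_4]\cdots[x_{d-1},x_d]$ modulo $D_3$; comparison with $D^{ab}$ (using the higher Bocksteins to pin the exact exponent) matches the $p$-power part with $q$.

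The hard part will be upgrading this congruence modulo $D_3$ to an exact equality $r = x_1^q[x_1,x_2]\cdots[x_{d-1},x_d]$ after a further automorphism of $F$: this is Labute's theorem, and it cannot be done by naive term-by-term correction, because altering $r$ modulo $D_{n+1}$ perturbs all lower terms. The route I would take is to analyze the relation module $\mathfrak{r} = R^{ab}$ as a $\mathbb{Z}_p[[D]]$-module: Poincar\'e duality in dimension $2$ forces $\mathfrak{r}$ to be cyclic of a prescribed isomorphism type, equivalently forces the associated graded restricted Lie algebra $\operatorname{gr}(D) = \bigoplus_n D_n/D_{n+1}$ to be the Demushkin Lie algebra determined by $d$ and $q$. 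One then constructs, by induction on $n$ and passage to the inverse limit (using completeness of the pro-$p$ topology), a sequence of automorphisms of $F$ carrying $r$ to the standard relator modulo $D_n$ for every $n$, whose limit realizes the desired isomorphism. This is the technically deep step.

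Finally, for the converse I would argue directly on $G = \langle x_1,\dots,x_d \mid x_1^q[x_1,x_2]\cdots[x_{d-1},x_d]\rangle_p$ with $q\neq 2$. As a one-relator pro-$p$ group with relator in $\Phi(F)$ it satisfies $\dim H^1(G,\mathbb{F}_p)=d$, $\dim H^2(G,\mathbb{F}_p)=1$, and $H^i(G,\mathbb{F}_p)=0$ for $i\geq 3$ (one-relator pro-$p$ groups have $\operatorname{cd}_p\leq 2$). The only nontrivial pairing to verify is $H^1\times H^1\to H^2$, and reading it off the commutator part $[x_1,x_2]\cdots[x_{d-1},x_d]$ exhibits it as the standard symplectic form, hence non-degenerate; together with $H^0\times H^2\to H^2$ being multiplication this is exactly Poincar\'e duality in dimension $2$, so $G$ is Demushkin. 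Computing $G^{ab}=\mathbb{Z}/q\oplus\mathbb{Z}_p^{d-1}$ shows its invariants are $d$ and $q$, and combined with the forward direction this proves that every Demushkin group with $q\neq 2$ is isomorphic to the stated one.
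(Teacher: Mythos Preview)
The paper does not supply a proof of this theorem: it is stated as a classical result and attributed to Demushkin \cite{Dem1}, \cite{Dem2} (with the $q=2$ cases handled in the companion Theorems~\ref{d2p} and~\ref{d3p}, attributed to Serre and Labute). There is therefore no proof in the paper against which to compare your proposal.

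That said, your outline is a faithful sketch of the classical argument as it appears in the literature (see e.g.\ Serre's \emph{Galois Cohomology} or Labute's paper \cite{La}). The reduction to a single relator via $\dim H^2=1$, the identification of the cup product $H^1\times H^1\to H^2$ with the alternating part of the relator modulo the third term of the Zassenhaus filtration, the observation that $q\neq 2$ kills the diagonal so that non-degeneracy forces $d$ even, and the symplectic normalization modulo $D_3$ are all exactly as in the standard treatment. You also correctly isolate the genuinely delicate step: lifting the congruence modulo $D_3$ to an honest equality of relators requires the successive-approximation argument through the filtration, and this is precisely what Demushkin (for $p$ odd and for $p=2$, $q\neq 2$) carried out. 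Your description of this step via the relation module and the associated graded Lie algebra is in the right spirit, though in practice the original proofs proceed more directly by inductively constructing automorphisms of $F$ that correct the relator one filtration level at a time. The converse argument you give (one-relator pro-$p$ groups have $\operatorname{cd}_p\leq 2$, and the cup product is read off the commutator portion of the relator) is likewise the standard one.
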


\begin{theorem} \cite{Se} \label{d2p} Let $D$ be a Demushkin pro-$2$ group with invariants $d, q$ and suppose that $q = 2$ and $d$ is odd. Then $D$ is isomorphic to $F /R$, where $F$ is a free pro-$2$ group with basis $x_1, \ldots , x_d$ and $R$ is generated as a normal closed subgroup by
$$x_1^2 x_2^{2^f} [x_2, x_3] \ldots [x_{d-1}, x_d]$$
for some integer $f \geq 2$ or $\infty$. Furthermore all groups having such presentations
are Demushkin.
\end{theorem}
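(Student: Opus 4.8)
The plan is to recover the relator from the Poincar\'e duality structure recorded in the cup product, and then to normalize it by an automorphism of the ambient free pro-$2$ group. First I would produce a minimal presentation $D = F/R$, with $F$ free pro-$2$ of rank $d$ and $R$ the smallest closed normal subgroup generated by a single relator $r$: this is legitimate because $\dim_{\mathbb{F}_2} H^1(D,\mathbb{F}_2) = d$ and $\dim_{\mathbb{F}_2} H^2(D,\mathbb{F}_2) = 1$ force a presentation with $d$ generators and one relation, and minimality forces $r \in \Phi(F) = F^2[F,F]$.

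Next I would translate the cohomological data into information about the image of $r$ in the successive quotients $F_n/F_{n+1}$ of the Zassenhaus (mod-$2$) filtration, whose direct sum is the free restricted Lie algebra on the classes $\bar{x}_i$. Over $\mathbb{F}_2$ the cup product $H^1(D,\mathbb{F}_2)\times H^1(D,\mathbb{F}_2)\to H^2(D,\mathbb{F}_2)\cong\mathbb{F}_2$ is a symmetric bilinear form $B$, nondegenerate by the Poincar\'e duality recalled above, and a standard computation in the style of Labute identifies $B$ with the form read off the image of $r$ in $F_2/F_3$: the commutators $[x_i,x_j]$ give the off-diagonal entries, while the squares $x_i^2$ give the diagonal entries $B(\chi,\chi)=\chi\cup\chi$, which detect the Bockstein and hence the torsion invariant $q$. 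The hypothesis $q=2$ says precisely that $B$ is not alternating, i.e. $B(\chi,\chi)\neq 0$ for some $\chi$; in odd dimension $d$ this is automatic, since a nondegenerate symmetric bilinear form over $\mathbb{F}_2$ of odd dimension can never be alternating (this is exactly why the case $d$ odd is forced into $q=2$ and treated separately from the even case of Theorem \ref{d1p}).

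The core of the argument is a two-stage normalization. At the level of $F_2/F_3$ I would invoke the classification of nondegenerate symmetric bilinear forms over $\mathbb{F}_2$: in odd dimension such a form is equivalent to the orthogonal sum of a one-dimensional anisotropic form and a hyperbolic form. Realizing the diagonalizing change of basis by an automorphism of $F$, I can arrange $r \equiv x_1^2\,[x_2,x_3]\cdots[x_{d-1},x_d] \pmod{F_3}$, with $x_1$ carrying the anisotropic direction and the hyperbolic planes producing the commutator pairs. The second stage is a successive-approximation argument, correcting $r$ modulo the deeper terms $F_n$ by further automorphisms of $F$; the only datum that survives this process is the exponent $f\ge 2$ in an additional square $x_2^{2^f}$, which couples the anisotropic vector to the alternating block and, as one checks by abelianizing, is invisible at the level of both $B$ and $D^{\mathrm{ab}}\cong\mathbb{Z}_2^{\,d-1}\oplus\mathbb{Z}/2$, hence cannot be removed.

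I expect the main obstacle to be this second stage. At $p=2$ squares and commutators are entangled in the restricted Lie algebra, so the filtration degrees of $x_i^2$ and $[x_i,x_j]$ collide and $B$ is merely symmetric rather than alternating; one must therefore track the relator carefully through successive filtration quotients to show that $f$ is a genuine and complete invariant and that no admissible generator change eliminates the $x_2^{2^f}$ term. Finally, for the converse I would take any $F/R$ with the displayed relator and verify directly that $\dim H^2=1$ with nondegenerate cup product form and cohomological dimension $2$, so that the group is Demushkin with the prescribed invariants $d$ and $q=2$.
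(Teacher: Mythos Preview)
The paper does not contain a proof of this theorem. Theorem~\ref{d2p} (together with Theorems~\ref{d1p} and~\ref{d3p}) is stated in the paper purely as a cited classification result from the literature, attributed to Serre~\cite{Se}, and is used only as background on Demushkin groups; no argument for it is given or sketched anywhere in the text. There is therefore nothing to compare your proposal against.

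That said, your outline is broadly the classical strategy due to Serre and Labute: extract a one-relator minimal presentation from $\dim_{\mathbb{F}_2} H^2=1$, read the cup product form off the image of the relator in $F_2/F_3$, classify nondegenerate symmetric bilinear forms over $\mathbb{F}_2$ in odd dimension, and then refine through the Zassenhaus filtration. Your identification of the delicate point is accurate: at $p=2$ the restricted Lie algebra mixes squares with commutators, and establishing that the residual invariant $f$ is well-defined and exhausts the isomorphism classes requires genuine work with the higher filtration quotients (this is the content of Serre's and later Labute's arguments). Your sketch does not actually carry out that step, and the claim that $f$ ``cannot be removed'' needs a concrete invariant distinguishing different values of $f$---this is typically done via the image of the Bockstein on $H^1$ or, equivalently, the structure of $D/D^4[D,D]$ rather than merely $D^{\mathrm{ab}}$. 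If you intend to write a full proof, that is where the substance lies.
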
 

\begin{theorem} \cite{La} \label{d3p} Let $D$ be a Demushkin pro-$2$ group with $d$ even and $q = 2$. Then
$D$ is isomorphic to $F /R$, where $F$ is a free pro-$2$ group with basis $x_1, \ldots, x_d$ and $R$ is generated as a normal closed subgroup either by
$$x_1^{2^f +2}
 [x_1, x_2][x_3, x_4] \ldots [x_{d-1}, x_d]$$ for some integer $f \geq 2$ or $f = \infty$,
or by
$$
x_1^2 [x_1, x_2]x_3^{2^f} [x_3, x_4] \ldots [x_{d-1}, x_d]$$ for some integer $f \geq 2$ or $f = \infty$, $d \geq 4$.
Furthermore all groups having such presentations are Demushkin.
\end{theorem}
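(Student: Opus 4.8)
The plan is to follow Labute's strategy, which normalizes the single defining relator of $D$ up to an automorphism of the ambient free pro-$2$ group, the delicate point being the genuinely characteristic-$2$ behaviour of the cup product. By the properties recalled just before Theorem \ref{d1p}, a Demushkin group is a Poincar\'e duality pro-$2$ group of dimension $2$, so $\dim_{\mathbb{F}_2} H^2(D,\mathbb{F}_2) = 1$; as explained in Section \ref{sec:prof}, this means $D$ has a one-relator pro-$2$ presentation $D = F/R$ with $F$ free pro-$2$ of rank $d = \dim_{\mathbb{F}_2} H^1(D,\mathbb{F}_2)$ and $R$ the closed normal subgroup generated by a single relator $r$. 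Since an automorphism of $F$ carrying $r$ to $r'$ induces an isomorphism $F/R \cong F/R'$, the entire task is to bring $r$ into a normal form. I would first record that the two invariants are cohomological: $d$ is the minimal number of generators, while $q$ is the order of the torsion subgroup of $D^{ab}$, so the standing hypothesis $q = 2$ is equivalent to the Bockstein $\beta \colon H^1(D,\mathbb{F}_2) \to H^2(D,\mathbb{F}_2)$ being nonzero.

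The next step is to read off the ``initial form'' of $r$. Working modulo the third term of the Zassenhaus ($2$-central) filtration of $F$, the relator $r$ is captured by a function on $H^1(D,\mathbb{F}_2)$ built from two pieces: the cup product $\cup \colon H^1 \times H^1 \to H^2 \cong \mathbb{F}_2$ and the squaring $x \mapsto x \cup x = \beta(x)$. In characteristic $2$ these assemble into a single quadratic form whose associated bilinear form is the cup product, and Poincar\'e duality forces that bilinear form to be non-degenerate. This is exactly where the present case diverges from the odd case of Theorem \ref{d1p}: there the cup product is alternating and $x \cup x$ vanishes, whereas here $q = 2$ guarantees $\beta \neq 0$, so the diagonal of the form is nontrivial and one must classify non-degenerate quadratic, rather than merely alternating, forms.

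The decisive algebraic input is then the classification of non-degenerate quadratic forms on an even-dimensional $\mathbb{F}_2$-vector space: up to equivalence there are precisely two, distinguished by the Arf invariant, with standard models $\sum_i X_{2i-1} X_{2i}$ and $X_1^2 + X_1 X_2 + X_2^2 + \sum_{i \geq 2} X_{2i-1} X_{2i}$. Realizing the corresponding change of basis by a Nielsen-type automorphism of $F$ transports the initial form of $r$ into one of these two models, which I expect to match the quadratic parts of the two relators displayed in the statement, the surviving square term reflecting $q = 2$. The two Arf classes thus account for the two families, while the integer parameter $f$ is a finer invariant, not visible mod $2$, that records the precise $2$-adic depth of the square appearing in $r$.

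The main obstacle, and the substance of Labute's theorem, is the lifting: once the initial form is normalized one must show that the higher-order corrections to $r$ can be successively absorbed by further automorphisms of $F$, so that $r$ can be taken to be \emph{exactly} one of the displayed words and the parameter $f$ pinned down. I would run this as an induction descending through the Zassenhaus filtration (equivalently, refining the relator modulo increasing powers of $2$), using at each stage that the obstruction to adjusting $r$ by a higher commutator lies in a space controlled by the non-degeneracy already established, hence can be killed. Finally, for the converse I would compute directly from each of the two presentations that $H^*(D,\mathbb{F}_2)$ has Poincar\'e duality shape, namely $\dim H^1 = d$, $\dim H^2 = 1$, and non-degenerate cup product read off from the explicit relator, which certifies that every group of the stated form is indeed Demushkin.
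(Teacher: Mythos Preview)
The paper does not supply a proof of this theorem: it is stated with a bare citation to Labute \cite{La} as part of the trio of classification results (Theorems \ref{d1p}, \ref{d2p}, \ref{d3p}) recalled before Corollary \ref{Demup}, and the argument is never reproduced. So there is nothing in the paper to compare your proposal against.

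That said, what you have written is a fair high-level outline of Labute's actual argument: reduce to a one-relator presentation via $\dim H^2 = 1$, read the initial form of the relator as a quadratic form on $H^1(D,\mathbb{F}_2)$ whose polar form is the (non-degenerate) cup product, invoke the Arf-invariant dichotomy for even-dimensional non-degenerate quadratic forms over $\mathbb{F}_2$ to get the two families, and then refine through the lower $2$-central series to pin down the parameter $f$. You are right that the lifting is where the work lies; your sketch of it (``obstructions lie in a space controlled by non-degeneracy, hence can be killed'') is more of a hope than an argument, and in Labute's paper this step occupies the bulk of the effort, with careful bookkeeping of commutator identities specific to $p=2$. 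If you intend this as a genuine proof rather than a pointer to \cite{La}, that inductive step would need to be written out.
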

 
 As a corollary of Corollary \ref{cor1p} we obtain the following result on incoherence.

\begin{corollary} \label{Demup} \cite{Koch}
Let $1 \to K \to G \to \Gamma \to 1$ be a short exact sequence of pro-$p$ groups such that  
\begin{enumerate}
\item $K$ is a non-abelian pro-$p$ RAAG or a non-soluble Demushkin group,
\item $\Gamma$  is a non-abelian pro-$p$ RAAG or a non-soluble Demushkin group.
\end{enumerate}
Then $G$ is incoherent (in the category of pro-$p$ groups).
\end{corollary}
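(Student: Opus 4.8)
The plan is to deduce the statement from Corollary \ref{cor1p} with $n_0 = 1$, applied not to $G$ itself but to a suitable subgroup of the form $K \rtimes \Gamma_0$. Recall first that $1$-coherence (i.e.\ coherence) passes to subgroups, so its negation passes up: it suffices to exhibit inside $G$ a finitely generated pro-$p$ subgroup that is not finitely presented. Since finite presentation of pro-$p$ groups coincides with type $FP_2$, the conclusion of Corollary \ref{cor1p} (a normal subgroup $M$ of type $FP_1$ but not $FP_2$) delivers exactly such a witness. The whole proof therefore consists in manufacturing a subgroup of $G$ to which Corollary \ref{cor1p} applies.

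First I would produce the free base. Both a non-abelian pro-$p$ RAAG and a non-soluble Demushkin group contain a (topologically) finitely generated non-procyclic free pro-$p$ subgroup: in the RAAG case two non-adjacent vertices generate a free pro-$p$ group of rank $2$, and in the Demushkin case this is the standard fact that the groups of Theorems \ref{d1p}--\ref{d3p} with $d \geq 3$ (equivalently $\chi = 2 - d \neq 0$) are non-soluble and contain non-abelian free pro-$p$ subgroups. Choose such a $\Gamma_1 \leq \Gamma$ and pull it back to $G_1 = f^{-1}(\Gamma_1)$, giving a short exact sequence $1 \lr K \lr G_1 \lr \Gamma_1 \lr 1$. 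As $\Gamma_1$ is free pro-$p$ it is projective, so this sequence splits and $G_1 \cong K \rtimes \Gamma_1$. The conjugation action on the abelianization yields a continuous homomorphism $\rho \colon \Gamma_1 \to \Aut(K^{ab})$; since $K^{ab}$ is a finitely generated abelian pro-$p$ group, the pro-$p$ image $\rho(\Gamma_1)$ lies in a pro-$p$ analytic subgroup of $\Aut(K^{ab})$ (a congruence subgroup of $GL_n(\Z_p)$ up to the finite torsion part), which contains no non-procyclic free pro-$p$ subgroup. Hence $\rho$ is not injective, and as a non-abelian free pro-$p$ group has no nontrivial normal abelian closed subgroup, $\ker \rho$ is a non-procyclic closed, hence free pro-$p$, subgroup of $\Gamma_1$. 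Pick inside it a finitely generated non-procyclic free pro-$p$ subgroup $\Gamma_0$ and set $G_0 = K \rtimes \Gamma_0 = f^{-1}(\Gamma_0) \leq G$. Because $\Gamma_0$ acts trivially on $K^{ab}$ we get $[K, \Gamma_0] \subseteq K'$, whence $G_0' \cap K = K'$.

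The heart of the argument, and the step I expect to be the main obstacle, is to find an epimorphism $\phi \colon K \to \Z_p$ whose kernel $N$ is not finitely generated (automatically $K' \subseteq N$, since $K/N$ is abelian). Here the two cases genuinely diverge. If $K$ is a non-abelian RAAG, I would retract it onto the free pro-$p$ group $\langle a, b \rangle$ generated by two non-adjacent vertices (sending all other vertices to $1$; this respects the defining commutator relations) and then compose with an epimorphism $\langle a,b\rangle \onto \Z_p$; the resulting $N$ surjects onto the kernel of $\langle a,b\rangle \onto \Z_p$, which is free pro-$p$ of infinite rank and hence not finitely generated, so $N$ cannot be finitely generated either. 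If $K$ is a non-soluble Demushkin group, then $K^{ab}$ has free rank $\geq 2$, so an epimorphism $\phi$ exists; its kernel $N$ has infinite index in the $\mathrm{PD}^2$ group $K$, hence $cd_p(N) \leq 1$ and $N$ is free pro-$p$. Were $N$ finitely generated we would have $K \cong N \rtimes \Z_p$ with $N$ of finite rank, forcing $\chi(K) = \chi(N)\,\chi(\Z_p) = 0$ and contradicting $\chi(K) = 2 - d \neq 0$; so $N$ is not finitely generated.

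It remains to assemble the pieces: applying Corollary \ref{cor1p} to $G_0 = K \rtimes \Gamma_0$ with $n_0 = 1$, with $\Gamma_0$ finitely generated free pro-$p$ non-procyclic, $K$ of type $FP_1$, and $N \unlhd K$ satisfying $G_0' \cap K = K' \subseteq N$, $K/N \simeq \Z_p$, $N$ of type $FP_0$ but not $FP_1$, produces a normal subgroup $M \unlhd G_0$ with $G_0/M \simeq \Z_p$ that is of type $FP_1$ but not of type $FP_2$. Thus $M \leq G$ is a finitely generated pro-$p$ subgroup that is not finitely presented, and $G$ is incoherent in the category of pro-$p$ groups.
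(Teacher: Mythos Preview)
Your proposal is correct and follows exactly the template the paper lays out in Section~\ref{sec:coh}: pass to a free pro-$p$ subgroup of $\Gamma$, use that $\Aut(K^{ab})$ is $p$-adic analytic to find $\Gamma_0$ acting trivially on $K^{ab}$ (whence $G_0'\cap K=K'$), exhibit an epimorphism $K\to\Z_p$ with infinitely generated kernel, and apply Corollary~\ref{cor1p} with $n_0=1$. The only extra work beyond the free-by-free case sketched there is producing the non-procyclic free pro-$p$ subgroup of $\Gamma$ and the infinitely generated kernel in $K$ for the RAAG and Demushkin cases, which you handle correctly (the retraction onto two non-adjacent generators, respectively the Euler-characteristic obstruction $\chi(K)=2-d\neq 0$).
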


Let $G$ be a finitely generated pro-$p$ group. Define $$Aut_0(G) = Ker (Aut(G) \lr Aut(G/ \Phi(G))),$$ where $\Phi(G)$ is the Frattini subgroup of $G$. Then $Aut_0(G)$ is a pro-$p$ subgroup of $Aut(G)$ of finite index. 
For a (topologically) finitely generated free pro-$p$ group $F$  the topological group $Aut(F)$ was studied by Lubotsky in \cite{L}.

We are interested whether a pro-$p$ version of the Gordon result from \cite{G} that the automorphism group of the free abstract group of rank 2 is incoherent holds.

\begin{corollary} \label{aut} \cite{Koch}
Suppose that $K$ is a finitely generated free pro-$p$ group with $ d(K) \geq 2$. If $Out(K)$ contains  a pro-$p$  free non-procyclic subgroup then 
$Aut_0(K)$ is incoherent (in the category of pro-$p$ groups).
\end{corollary}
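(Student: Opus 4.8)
The plan is to realize inside $Aut_0(K)$ a closed pro-$p$ subgroup of the shape $G_0 = K \rtimes \Gamma_0$, where $\Gamma_0$ is a finitely generated free non-procyclic pro-$p$ group \emph{acting trivially on the abelianization} $K^{ab}$, and then to feed this into Corollary \ref{cor1p} with $n_0 = 1$. Since coherence passes to (closed) subgroups, it suffices to produce one incoherent subgroup of $Aut_0(K)$: the corollary will manufacture a topologically finitely generated pro-$p$ subgroup of $G_0$ that is not finitely presented, forcing $Aut_0(K)$ to be incoherent.

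First I would record that $Inn(K) \cong K$ (as $K$ is free, hence center-free) and that $Inn(K) \subseteq Aut_0(K)$, because every inner automorphism acts trivially on the abelian Frattini quotient $K/\Phi(K)$ and so lies in the kernel defining $Aut_0(K)$. Consequently $Out_0(K) := Aut_0(K)/Inn(K)$ is an open pro-$p$ subgroup of $Out(K)$, and the hypothesized free non-procyclic pro-$p$ subgroup of $Out(K)$ meets $Out_0(K)$ in a subgroup $\Gamma'$ which, being closed of finite index in a free pro-$p$ group, is again free pro-$p$ and non-procyclic (the Nielsen--Schreier rank formula keeps the rank $\geq 2$). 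As $\Gamma'$ is free pro-$p$, hence projective, the extension $1 \to Inn(K) \to \pi^{-1}(\Gamma') \to \Gamma' \to 1$, with $\pi \colon Aut_0(K) \to Out_0(K)$, splits; a section yields a closed subgroup $Inn(K) \rtimes \Gamma' \cong K \rtimes \Gamma'$ of $Aut_0(K)$, on which $\Gamma'$ acts through genuine automorphisms of $K$.

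The crux is to upgrade ``trivial on the Frattini quotient'' to ``trivial on all of $K^{ab} \cong \mathbb{Z}_p^n$''. The conjugation action of $\Gamma'$ on $K^{ab}$ gives a continuous homomorphism $\rho \colon \Gamma' \to GL_n(\mathbb{Z}_p)$, landing in the pro-$p$ congruence subgroup since $\Gamma' \subseteq Out_0(K)$. As explained in Section \ref{sec:coh}, $GL_n(\mathbb{Z}_p)$ is pro-$p$ analytic and therefore admits no free non-procyclic pro-$p$ subgroup, so $\rho$ cannot be injective and $Ker(\rho)$ is a nontrivial closed normal subgroup of $\Gamma'$. A nontrivial normal closed subgroup of a free pro-$p$ group of rank $\geq 2$ is free pro-$p$ and non-procyclic, so I may select inside $Ker(\rho)$ a finitely generated free non-procyclic pro-$p$ subgroup $\Gamma_0$, exactly as in the argument of Section \ref{sec:coh}. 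Putting $G_0 = K \rtimes \Gamma_0 \subseteq Aut_0(K)$, the group $\Gamma_0$ now acts trivially on $K^{ab}$, and a direct computation of the abelianization $G_0^{ab} \cong K^{ab} \oplus \Gamma_0^{ab}$ gives $G_0' \cap K = K'$.

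Finally I would apply Corollary \ref{cor1p} to $G_0$ with $n_0 = 1$. Choose any closed normal $N \unlhd K$ with $K' = G_0' \cap K \subseteq N$ and $K/N \cong \mathbb{Z}_p$; such $N$ exists because $K^{ab} \cong \mathbb{Z}_p^n$ surjects onto $\mathbb{Z}_p$. The kernel of any epimorphism from a free pro-$p$ group of rank $\geq 2$ onto $\mathbb{Z}_p$ is not topologically finitely generated, so $N$ is of type $FP_0$ but not $FP_1$, while $K$ is $FP_1$ and $\Gamma_0$ is finitely generated free non-procyclic. Corollary \ref{cor1p} then yields $M \unlhd G_0$ with $G_0/M \cong \mathbb{Z}_p$ and $M \cap K = N$, of type $FP_1$ but not $FP_2$; that is, a topologically finitely generated pro-$p$ subgroup of $G_0 \subseteq Aut_0(K)$ which is not finitely presented. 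Hence $Aut_0(K)$ is incoherent. The main obstacle is the middle step: descending from the Frattini quotient to the full abelianization, where the analyticity of $GL_n(\mathbb{Z}_p)$ and the extraction of a finitely generated non-procyclic subgroup of $Ker(\rho)$ are indispensable.
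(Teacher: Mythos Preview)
Your proof is correct and follows essentially the same route as the paper. The argument you give---realizing $K \cong Inn(K)$ inside $Aut_0(K)$, pulling the hypothesized free pro-$p$ subgroup of $Out(K)$ into $Out_0(K)$ by finite index, lifting it via projectivity, passing to $\Gamma_0 \subseteq Ker(\rho)$ using that $GL_n(\mathbb{Z}_p)$ is $p$-adic analytic, and then invoking Corollary~\ref{cor1p} with $n_0=1$---is precisely the mechanism spelled out in Section~\ref{sec:coh} (and packaged as Corollary~\ref{CorMain*}). The only cosmetic difference is that the paper would simply cite Corollary~\ref{CorMain*} applied to the extension $1 \to K \to \pi^{-1}(\Gamma') \to \Gamma' \to 1$, whereas you unfold its proof; either way the content is identical.
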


As mentioned before by the  results in \cite{B-S} every metabelian quotient of a finitely presented abstract group  that does not contain free non-cyclic abstract subgroups  is itself finitely presented. But we do not know whether a pro-$p$ version of this holds. Using Corollary \ref{aut} and some results of Romankov from  \cite{R0}, \cite{R}  the following result was proved in \cite{Koch}.

\begin{corollary} \label{alternativep} \cite{Koch}
Suppose that $K$ is a finitely generated free pro-$p$ group  with $d(K)  \geq 2$.  Then either 
$Aut_0(K)$ is incoherent (in the category of pro-$p$ groups) or the pro-$p$ version of the Bieri-Strebel result does not hold.
\end{corollary}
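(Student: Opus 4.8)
The plan is to argue by a dichotomy on whether $Out(K)$ contains a free non-procyclic pro-$p$ subgroup, playing Corollary~\ref{aut} against the pro-$p$ Bieri--Strebel statement via Romankov's results. The starting observation is that $Aut_0(K)$ fits into a short exact sequence of pro-$p$ groups $1 \to Inn(K) \to Aut_0(K) \to Out_0(K) \to 1$, where $Inn(K)\cong K$ (recall $K$ is non-abelian free pro-$p$, hence centreless) is free non-procyclic, and $Out_0(K)$ is the image of $Aut_0(K)$ in $Out(K)$, a pro-$p$ subgroup of finite index. In particular $Aut_0(K)$ always contains the free non-procyclic pro-$p$ subgroup $Inn(K)$, so it can never serve as the ``input'' group for the pro-$p$ Bieri--Strebel statement; the inner automorphisms have to be factored out first, which is what makes $Out(K)$ the natural group to test.

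If $Out(K)$ contains a free non-procyclic pro-$p$ subgroup, then Corollary~\ref{aut} applies directly and gives that $Aut_0(K)$ is incoherent, which is the first alternative in the statement.

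Suppose instead that $Out(K)$ --- equivalently its finite-index pro-$p$ subgroup $Out_0(K)$ --- contains no free non-procyclic pro-$p$ subgroup. Here I would invoke Romankov's results from \cite{R0,R}, which supply two opposing finiteness facts for the automorphism groups of a free pro-$p$ group of rank $\geq 2$: that $Out_0(K)$ is finitely presented as a pro-$p$ group, and that its maximal metabelian pro-$p$ quotient is \emph{not} finitely presented. Under the standing assumption of this case, $Out_0(K)$ is then a finitely presented pro-$p$ group with no free non-procyclic pro-$p$ subgroup whose maximal metabelian pro-$p$ quotient fails to be finitely presented; this is exactly a counterexample to the pro-$p$ analogue of the Bieri--Strebel theorem, so in this case the pro-$p$ version of the Bieri--Strebel result does not hold.

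Putting the two cases together yields the stated alternative. The main obstacle is the second case: one must extract from Romankov's work both the finite presentability of $Out_0(K)$ and the failure of finite presentability of its maximal metabelian quotient, and --- the delicate point --- verify that the free non-procyclic pro-$p$ subgroups always present in $Aut_0(K)$ through $Inn(K)$ genuinely vanish after passing to $Out(K)$, so that the hypotheses of the pro-$p$ Bieri--Strebel statement are honestly satisfied by the group we propose as a counterexample.
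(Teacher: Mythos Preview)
Your dichotomy on whether $\out(K)$ contains a free non-procyclic pro-$p$ subgroup, invoking Corollary~\ref{aut} in the first case and Romankov's results in the second, is exactly the structure the paper indicates (the paper itself only says the corollary follows from Corollary~\ref{aut} together with \cite{R0,R}, deferring details to \cite{Koch}).

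There is, however, a concern with your Case~2. You attribute to Romankov the pair of facts that $\out_0(K)$ is finitely presented \emph{and} that its maximal metabelian quotient is not. The second is plausible, but the first sits uneasily with the very title of \cite{R} (``Infinite generation of groups of automorphisms of free pro-$p$-groups''): Romankov's contribution is on the \emph{failure} of finite generation for certain automorphism-type groups, not a positive finiteness result. If finite presentability of $\out_0(K)$ is genuinely needed as the input to the pro-$p$ Bieri--Strebel statement, it must come from elsewhere (perhaps via Lubotzky's work \cite{L} on $\Aut(F)$, or via a more indirect argument in \cite{Koch}), and you should verify this rather than fold it into the Romankov citation. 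Alternatively, the argument in \cite{Koch} may route through a different finitely presented group altogether --- for instance a suitable quotient or image of $\out_0(K)$ whose finite presentability is established directly --- rather than $\out_0(K)$ itself. Since the survey does not spell this out, you should consult \cite{Koch} to pin down which finitely presented group actually serves as the witness and how Romankov's infinite-generation results feed into showing its metabelian quotient fails to be finitely presented.
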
 

Recently Marco Boggi has told us that the condition that  $Out(K)$ contains  a pro-$p$  free non-procyclic subgroup from Corollary \ref{aut} can be verified in many cases. We include an appendix that treats this condition.

\setcounter{secnumdepth}{-1}
\section{Appendix: A lemma on the outer automorphism group of a free pro-$p$ group \\
by Marco Boggi, Universidade Federal Fluminense, Brazil}
For a given group $G$, we denote by $\wh{G}^{(p)}$ its pro-$p$ completion and by $d(G)$ the cardinality of a minimal set of generators. 
We then have: 

\begin{lemma} Let $F$ be a free group. 
\begin{enumerate}
\item If $d(F)\geq 3$, then $\out(\wh{F}^{(p)})$ contains a free pro-$p$ group of rank $d(F)-1$.
\item If $d(F)= 2$, then $\out(\wh{F}^{(2)})$ contains a free pro-$2$ group of rank $2$.
\end{enumerate}
\end{lemma}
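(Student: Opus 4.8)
The plan is to handle the two cases by genuinely different methods: in case (1) I would exhibit an explicit free pro-$p$ subgroup coming from ``transvections'', while case (2) forces a reduction to a delicate pro-$2$ freeness statement that cannot be geometric.

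For (1), set $n=d(F)\geq 3$, take $F=F_n$, and fix a basis $x_1,\dots,x_n$ of $\wh F^{(p)}=\wh{F_n}^{(p)}$. Let $H=\overline{\langle x_2,\dots,x_n\rangle}$ be the closed subgroup generated by the last $n-1$ generators; since these are part of a basis, $\wh F^{(p)}$ is the free pro-$p$ product $\overline{\langle x_1\rangle}\amalg H$, so $H\cong\wh{F_{n-1}}^{(p)}$ is free pro-$p$ of rank $n-1$. For each $w\in H$ define a continuous automorphism $\alpha_w$ of $\wh F^{(p)}$ by $x_1\mapsto x_1w$ and $x_i\mapsto x_i$ for $i\geq 2$. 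Because every $\alpha_w$ fixes $H$ pointwise, one checks $\alpha_w\alpha_{w'}=\alpha_{ww'}$, so $w\mapsto\alpha_w$ is a continuous isomorphism onto a closed subgroup $U\leq\aut(\wh F^{(p)})$, and hence $U\cong\wh{F_{n-1}}^{(p)}$ is free pro-$p$ of rank $n-1$.

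It then remains to show $U$ injects into $\out(\wh F^{(p)})$, i.e. that $U\cap\Inn(\wh F^{(p)})=\{1\}$ (note $\wh F^{(p)}$ is centre-free since $n\geq 2$, so $\Inn(\wh F^{(p)})\cong\wh F^{(p)}$). Suppose $\alpha_w$ is conjugation by some $g$; since $\alpha_w$ fixes $x_2$ and $x_3$, the element $g$ centralizes both. Here I would invoke the standard fact that centralizers of non-trivial elements of a free pro-$p$ group are procyclic, with a basis element $x_i$ having centralizer exactly $\overline{\langle x_i\rangle}$; passing to $\overline{\langle x_2,x_3\rangle}\cong\wh{F_2}^{(p)}$ and projecting to its abelianization $\Z_p^2$ gives $\overline{\langle x_2\rangle}\cap\overline{\langle x_3\rangle}=\{1\}$, whence $g=1$ and $w=1$. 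This uses $n\geq 3$ essentially (two independent generators are needed to pin down $g$), matching the hypothesis of (1), and the image of $U$ in $\out(\wh F^{(p)})$ is then free pro-$p$ of rank $d(F)-1$.

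For (2), with $d(F)=2$ the transvection subgroup collapses to a single procyclic $\Z_p$, so a new construction is needed, and the restriction to $p=2$ signals that it cannot be geometric. The structural constraint is that the abelianization action gives a map $\out(\wh{F_2}^{(p)})\to\gl_2(\Z_p)$ with $p$-adic analytic image; as a $p$-adic analytic pro-$p$ group contains no free pro-$p$ subgroup of rank $\geq 2$, this map cannot be injective on any rank-$2$ free pro-$2$ subgroup, which must therefore lie substantially in the pro-$p$ IA kernel of automorphisms acting trivially on $H_1$. Since every IA automorphism of the abstract $F_2$ is already inner, the required outer classes are genuinely pro-$2$ and invisible both on the abelianization and to $\aut(F_2)$. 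My plan is to work with the Johnson--Magnus (lower central) filtration of $\wh{F_2}^{(2)}$ and its associated graded free Lie algebra on two generators over $\F_2$, producing two pro-$2$ automorphisms whose leading Johnson homomorphisms generate a free subalgebra and deducing freeness of the closed subgroup they generate from faithfulness of the filtration. The hard part will be exactly this last step: establishing pro-$2$ freeness with no cheap cohomological vanishing available (one must show the Magnus/Fox-calculus expansions of the chosen automorphisms satisfy no pro-$2$ relation) and seeing why the rank-$2$ free Lie algebra computation succeeds only at the prime $2$. An alternative route runs through the pro-$2$ mapping class group of the once-punctured torus, equivalently the pro-$2$ completion of $B_3/Z$, where powers of two filling Dehn twists are the natural candidates; there the obstacle migrates to proving their pro-$2$ closure is free, again a delicate filtration argument displaying the same parity phenomenon at $p=2$.
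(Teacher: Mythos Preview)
Your treatment of part~(1) is correct and is in fact a more elementary route than the one in the paper. The paper constructs the free pro-$p$ subgroup geometrically, via the Birman short exact sequence for the mapping class group $\Gamma(S_{0,n},P)$ and an appeal to the equivalence of the two $p$-congruence topologies on $\Gamma(S,P)$ (a result from \cite{CongTop}); your transvection subgroup $U\cong\widehat{F}_{n-1}^{(p)}$ sidesteps all of this. The only facts you need are that closed abelian subgroups of a free pro-$p$ group are procyclic (cohomological dimension~1) and the abelianization argument pinning down $C(x_i)=\overline{\langle x_i\rangle}$ for a basis element; both are standard, and the compactness of $H$ then guarantees that the injective continuous map $H\to\mathrm{Out}(\widehat{F}^{(p)})$ is a closed embedding. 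So this half is fine, and arguably preferable.

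Part~(2), however, is not a proof but a programme, and the programme misses the actual mechanism. Your Johnson--Magnus plan would require showing that two pro-$2$ IA-automorphisms generate a free pro-$2$ group by controlling all relations through the lower central filtration; you are right that this is the hard step, but you give no indication of how to carry it out, and the ``parity phenomenon at $p=2$'' you allude to is not identified. Your alternative route through $\Gamma(S_{1,1})$ is in fact what the paper does, but the decisive input is not a ``delicate filtration argument'': it is the \emph{$2$-congruence subgroup property} for $\Gamma(S_{1,1})$, a theorem of Hoshi--Iijima (and independently Boggi) asserting that the natural map from the pro-$2$ completion of (a suitable finite-index subgroup of) $\Gamma(S_{1,1})$ into $\mathrm{Out}(\widehat{\Pi}_P^{(2)})$ is injective. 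Concretely, the kernel of $SL_2(\mathbb{Z})\to SL_2(\mathbb{Z}/2)$ splits as the center times a free group $D_2$ of rank~$2$ generated by squares of Dehn twists, and the $2$-congruence property gives directly that $\widehat{D_2}^{(2)}\hookrightarrow\mathrm{Out}(\widehat{\Pi}_P^{(2)})$. This is also where the restriction to $p=2$ enters: the $p$-congruence subgroup property for $\Gamma(S_{1,1})$ is known to \emph{fail} for $p\geq 11$, so the argument genuinely does not transfer. Without invoking this external result (or proving an equivalent faithfulness statement by other means), your sketch for~(2) does not close.
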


\begin{proof}(1): Let $S=S_{g,n}$ be a closed connected orientable surface of genus $g$ from which $n$ points have been 
removed and let $\Pi=\Pi_{g,n}$ be its fundamental group for a fixed base point 
$P\in S$. So that $\Pi_{0,n}$ is free of rank $n-1$ and $\Pi_{1,1}$ is free of rank $2$. Let us assume that $2g-2+n>0$ and that $n>0$.
Let $\Pi_P:=\pi_1(S\smallsetminus P, a)$, for some base point $a\in S\smallsetminus P$. Note then that $d(\Pi_P)=d(\Pi)+1$.

Let $\Gamma(S)$ be the mapping class group of the surface S and  $\Gamma(S,P)$ be the 
mapping class group of the marked surface $(S,P)$. There is a short exact
sequence (the Birman sequence, cf. \cite[Section 4.2]{FM12}): 
\[ 1 \lr \Pi \lr \Gamma(S,P) \lr \Gamma(S) \lr 1. \]
The representation $\rho_P \colon \Gamma(S,P) \to \aut(\Pi)$  induced by the action (modulo isotopy) of elements of $\Gamma(S,P)$ on the marked 
surface $(S, P)$ is faithful and is equivalent to the restriction of inner automorphisms from $\Gamma(S,P)$ to its normal subgroup $\Pi$.
In particular, this representation identifies the inclusion $\Pi\subset\Gamma(S,P)$ in the Birman sequence with the inclusion $\Inn\,\Pi\subset \aut(\Pi)$.

The representation $\rho_P^\mathrm{out} \colon \Gamma(S,P)\to\out(\Pi_P)$, induced by
the action (modulo isotopy) of elements of $\Gamma(S,P)$ on the surface $S\ssm P$, is also faithful.
This representation then identifies the subgroup $\Pi$ of $\Gamma(S,P)$ with a subgroup of $\out(\Pi_P)$.

This works also in the pro-$p$ setting. The profinite topologies on $\aut(\hP^{(p)})$ and $\out(\hP_P^{(p)})$ induce
profinite topologies on $\Gamma(S,P)$ via the representations $\rho_P$ and $\rho_P^\mathrm{out}$ and the natural
homomorphisms $\aut(\Pi)\to\aut(\hP^{(p)})$ and $\out(\Pi_P)\to\out(\hP_P^{(p)})$. In principle, these two topologies
might be distinct but they are in fact equivalent (cf.\ Corollary~4.7 in \cite{CongTop}) and called the \emph{$p$-congruence topology}
on $\Gamma(S,P)$. The completion of $\Gamma(S,P)$ with respect to this topology is denoted by $\check{\Gamma}^{(p)}(S,P)$ 
and called the \emph{$p$-congruence completion} of $\Gamma(S,P)$.

Since $\wh{\Pi}^{(p)}$ is center free, there is a natural monomorphism $\wh{\Pi}^{(p)}\cong\Inn(\wh{\Pi}^{(p)})\hookra\aut(\wh{\Pi}^{(p)})$.
This implies that the restriction of the $p$-congruence topology on $\Pi$, via the embedding $\Pi\hookra\Gamma(S,P)$, is the full pro-$p$
topology. Hence, there is a natural monomorphism $\wh{\Pi}^{(p)}\hookra\check{\Gamma}^{(p)}(S,P)$ and thus a natural monomorphism:
\[\wh{\Pi}^{(p)}\hookra\out(\wh{\Pi}_P^{(p)}).\]
For $S=S_{0,n}$ and $n\geq 3$, this gives an embedding of a free pro-$p$ group of rank $n-1$ in the outer automorphism group of a
free pro-$p$ group of rank $n$.
\medskip

\noindent
(2): By  \cite[Proposition 2.5 and 4.1]{HypNotes}, the kernel of the natural representation $\Gamma(S_{1,1})\cong SL_2(\Z)\to SL_2(\Z/2)$ is the direct product of a 
free group $D_2$ of rank $2$, generated by the squares of Dehn twists (these correspond to the transvections of $SL_2(\Z)$,
via the above isomorphism), and the center of $\Gamma(S_{1,1})$, which is a cyclic group of order $2$.

The conclusion then follows from the $2$-congruence property for $\Gamma(S_{1,1})$
(cf.\ (i) Theorem~A in \cite{HI} or also (ii) Theorem 1.4 in \cite{CongTop}). This in fact implies that the natural homomorphism 
$\wh{D_2}^{(2)}\to\out(\hat{\Pi}_P^{(2)})$, induced by the representation $\rho_P^\mathrm{out} : \Gamma(S_{1,1})\hookra\out(\Pi_P)$, 
is also faithful.

Note that a similar argument would fail for primes $p\geq 11$, since the $p$-congruence
subgroup property for $\Gamma(S_{1,1})$ does not hold in these cases by (ii) of Theorem~A in \cite{HI}. 
\end{proof}

%=========================================================


\begin{thebibliography}{10}
\bibitem[BKK02]{BKK02} M. Bestvina, M. Kapovich, B. Kleiner, {\em Van Kampen's embedding obstruction for discrete groups}, Invent. Math. 150 (2002), no. 2, 219-235.
\bibitem[BB97]{BB97} M. Bestvina and N. Brady, {\em Morse theory and finiteness properties
of groups}, Invent. Math. 129 (1997), no. 3, 445--470.
\bibitem[BG10]{BG10} R. Bieri, R. Geoghegan, {\em Sigma Invariants of Direct Products of Groups}, Groups Geom. Dyn. 4 (2010), no. 251--261.
\bibitem[BGr84]{B-G} R. Bieri,  J. R. J. Groves, {\it The geometry of the set of characters induced by valuations},
J. Reine Angew. Math. 347 (1984), 168–195.

\bibitem[BNS87]{BNS87} R. Bieri, W. D. Neumann, R. Strebel, {\em A geometric invariant of discrete groups}, Inventiones mathematicae, 90 (1987), 451--477.

	\bibitem[BS80]{B-S}  R. Bieri, R. Strebel, {\it Valuations and finitely presented metabelian groups}, Proc. London Math. Soc. (3) 41(1980) 439–464.
\bibitem[BR88]{BR88} R. Bieri, B. Renz, {\em Valuations on free resolutions and higher geometric invariants of groups} Commentarii Mathematici
Helvetici, 3 (1988), v. 63., 464--497
\bibitem[Br94]{Br94} K. S. Brown, {\em Cohomology of Groups}, Grad. Texts in Math. 87,
Springer-Verlag, New York, 1994.
\bibitem[Br10]{Br10} K. S. Brown, {\em Lectures on the cohomology of groups}, Cohomology of groups and algebraic K-theory, 131-166, Adv. Lect. Math. (ALM), 12, Int. Press, Somerville, MA, 2010.

\bibitem[Bo20]{CongTop}M. Boggi. \textsl{Congruence topologies on the mapping class groups}. J. of Algebra {\bf 546} (2020), 518--552. 

\bibitem[Bo24]{HypNotes}M. Boggi. \textsl{Notes on hyperelliptic mapping class groups.} Glasgow Math. J. {\bf 66} (2024), 
126--161.

\bibitem[BU11]{Bu11} M. Burger, {\em Fundamental groups of K\"ahler manifolds and geometric group theory}, S\'eminaire Bourbaki. Vol. 2009/2010. Expos\'es 1012--1026
Ast\'erisque (2011), no.339, Exp. No. 1022, ix, 305--321.
\bibitem[Ca03]{Ca03} F. Catanese, {\em Fibred K\"ahler and quasi-projective groups}, Adv. Geom. Suppl. (2003), 13--27; Special issue dedicated to Adriano Barlotti.
\bibitem[De10]{De10} T. Delzant, {\em L’invariant de Bieri Neumann Strebel des groupes fondamentaux des vari\'et\'es k\"ahleriennes},
Math. Ann. 348 (2010), 119--125.
\bibitem[De61]{Dem1} S. Demushkin,  {\it On the maximal p-extension of a local field}, Izv. Akad. Nauk, USSR
Math. Ser. 25, 329–346 (1961).

\bibitem[De63]{Dem2} 
S. Demushkin, {\it On 2-extensions of a local field}, Sibirsk. Mat. Z. 4, 951–955 (1963).






\bibitem[DSMS]{DSMS} J. Dixon, M. P. F. Du Sautoy, A. Mann, and D. Segal, {\it Analytic Pro-p Groups,}
London Math. Soc. Lecture Note Ser., Vol. 157, Cambridge Univ. Press, Cambridge,
UK, 1991.

\bibitem[FM12]{FM12} B. Farb and D. Margalit, {\em A Primer on Mapping Class Groups}, Princeton Math. Ser., vol. 49, Princeton University Press, Princeton, NJ, 2012. 
\bibitem[Fi21]{Fi21} S. Fisher, {Improved algebraic fibering}, preprint.
\bibitem[FV23]{FV23} S. Friedl, S. Vidussi, 
{\em BNS invariants and algebraic fibrations of group extensions}, J. Inst. Math. Jussieu 22 (2023), 985--999.
\bibitem[GN21]{GN21} D. Gaboriau, C. No\^us, {\em On the top-dimensional $L^2$-Betti numbers}, Ann. Fac. Sci. Toulouse Math. (6) 30 (2021), no. 5, 1121--1137.

\bibitem[HW20]{HW20} C. Horbez, R. D. Wade, {\em Commensurations of subgroups of $\On$}, Trans. Amer. Math. Soc., 373 (2020), no.4, 2699--2742.

\bibitem[Ho19]{HI}Y.\ Hoshi, Y. Ijima \textsl{A pro-$\ell$ version of the congruence subgroup
problem for mapping class groups of genus $1$}. J. of Algebra Vol. {\bf 520} (2019), 1--31.

 \bibitem[G04]{G} C. McA. Gordon, {\it Artin groups, 3-manifolds and coherence}, Bol. Soc. Mat. Mexicana (3) 10 (2004),  193--198.
\bibitem[Kie20]{Kie20} D. Kielak, {\em Residually finite rationally solvable groups and virtual fibring}, J. Amer. Math. Soc., 33 (2020), 451--486.


\bibitem[Ki1-99]{King} J. D. King, {\it Homological finiteness conditions for pro-p groups}, Comm. Algebra 27 (1999), no. 10, 4969 - 4991.

\bibitem[Ki2-99]{King2}  J. D. King, {\it A geometric invariant for metabelian pro-p groups}, J. London Math. Soc. (2) 60 (1999), no. 1, 83 - 94.


\bibitem[Ko22]{Koch}  D. H. Kochloukova, Higher dimensional algebraic fiberings for pro-p groups, Canadian Journal of Mathematics, to appear.

\bibitem[K-L18]{K-L} D. H. Kochloukova, F. F.  Lima, {\it  Homological finiteness properties of fibre products}, Q. J. Math. 69 (2018), no. 3, 835 - 854.
 \bibitem[KZ11]{K-Z} D. H. Kochloukova, P. A. Zalesskii, {\it On pro-p analogues of limit groups via extensions of centralizers,} Math. Z. 267 (2011), no. 1 - 2, 109 - 128.
\bibitem[KMM15]{KMM15} N. Koban, J.  McCammond, J. Meier, {\em The BNS-invariant for the pure braid groups}, Groups Geom. Dyn. 9 (2015), no. 3, 665--682. 
\bibitem[KV23]{KV23} D. Kochloukova, S. Vidussi, {\em Higher dimensional algebraic fiberings of group extensions}, J. London Math. Soc. (2) 108 (2023), 978--1003.
\bibitem[KVW21]{K-V-W} R. Kropholler, S. Vidussi,  G. Walsh, {\it Virtual algebraic fibrations of surface-by-surface groups and orbits of the mapping class group}, arXiv:2103.06930.
\bibitem[KW22]{KW22} R. Kropholler, G. Walsh, {\em Incoherence and fibering of many free–by–free groups}, Ann. Inst. Fourier (Grenoble) 72 (2022), no. 6, 2385--2397.
\bibitem[K14]{K}	B. Kuckuck, {\it Subdirect products of groups and the $n-(n+1)-(n+2)$ conjecture}, Q. J. Math. 65 (2014), 1293 - 1318.

\bibitem[La67]{La} J. P. Labute,  {\it Classification of Demushkin groups}, Canad. J. Math. 19, 106–132 (1967).
\bibitem[LIMP21]{LIMP21}  C. Llosa Isenrich, B. Martelli, P. Py, {\em Hyperbolic groups containing subgroups of type $F3$ not $F4$}, J. Differential Geometry, to appear.
\bibitem[LIP23]{LIP23}  C. Llosa Isenrich, P. Py, {\em Groups with exotic finiteness properties from complex Morse theory}, preprint (2023).
\bibitem[Lu02]{Lu02} W. L\"uck, {\em $L^2$-invariants: theory and applications to geometry and K-theory}, Ergebnisse der Mathematik und ihrer Grenzgebiete 3 44, Springer-Verlag, Berlin (2002).

\bibitem[L82]{L} A. Lubotzky, {\it Combinatorial group theory for pro-p-groups}, J. Pure Appl. Algebra
25 (1982), 311 - 325.




\bibitem[NS2-07]{N-S1} N. Nikolov, D. Segal, 
{\it On finitely generated profinite groups. I. Strong completeness and uniform bounds}, Ann. of Math. (2) 165 (2007), no. 1, 171 - 238.
 
\bibitem[NS2-07]{N-S2} N. Nikolov, D. Segal,{\it  On finitely generated profinite groups. II. Products in quasisimple groups},
Ann. of Math. (2) 165 (2007), no. 1, 239–273.

\bibitem[R88]{R88} B. Renz, {\em Geometrische Invarianten und Endlichkeitseigenschaften von Gruppen}, PhD Thesis, Johann Wolfgang Goethe-Universit\"at Frankfurt am Main, 1988.
\bibitem[R89]{R89} B. Renz, {\em Geometric invariants and HNN-extensions}, Group theory (Singapore, 1987), 465--484, de Gruyter, Berlin, 1989.


\bibitem[RZ10]{R-Z} L. Ribes, P. Zalesskii, {\it Profinite Groups}, 2nd ed., Springer, Berlin, 2010.

\bibitem[R92]{R0} V. A. Romankov,{\it  Generators for the automorphism groups of free metabelian pro-$p$ groups},  Sibirski Matematicheski Zhurnal, Vol. 33, No. 5, 145 - 158,
1992.

\bibitem[R93]{R} V. A. Romankov, {\it Infinite generation of groups of automorphisms of free pro-p-groups}, Sibirsk. Mat. Zh. 34 (1993), no. 4, 153–159.

\bibitem[S71]{Se}  J.-P. Serre, {\it Structure de certains pro-p-groupes}, Seminaire Bourbaki 1962/63(252), 357–364 (1971).

\bibitem[S]{Serre} J.-P. Serre, {\it Galois Cohomology,} Springer-Verlag, 1997.


\bibitem[SZ22]{S-Z} I. Snopce, P. Zalesskii, {\it Right-angled Artin pro-p groups},
Bull. Lond. Math. Soc. 54 (2022), no. 5, 1904–1922.



\bibitem[V23]{V23} S. Vidussi, {\em Higher incoherence of the automorphism groups of free groups}, preprint (2023).

\bibitem[W98]{W-book} J. S. Wilson, {\it Profinite groups}, London Mathematical Society Monographs. New Series, 19. The Clarendon Press, Oxford University Press, New York, 1998.


\bibitem[W91]{Wilson} J. S. Wilson,  {\it Finite presentations of pro-$p$ groups and discrete groups},
Invent. Math. 105 (1991), no. 1, 177–183.

\bibitem[Wi19]{Wi19} D. Wise, {\em An Invitation to Coherent Groups}, in What's Next?: The Mathematical Legacy of William P. Thurston (AMS-205), 326--414, Princeton University Press, Princeton, 2019.


\bibitem[ZZ20]{Z-Z} P. Zalesskii, T. Zapata, {\it Profinite extensions of centralizers and the profinite completion of limit groups}, Rev. Mat. Iberoam. 36 (2020), no. 1, 61–78.

\bibitem[Za17]{Za17} M. C. B. Zaremsky, {\em Separation in the BNSR-invariants of the pure braid groups}, Publ. Mat. 61 (2017), no. 2, 337--362.







\end{thebibliography}
\end{document}